\numberwithin{equation}{section}
\newtheorem{thm}[equation]{Theorem}
\newtheorem{cor}[equation]{Corollary}
\newtheorem{lem}[equation]{Lemma}
\newtheorem{prop}[equation]{Proposition}
\newenvironment{myproof}[1][\proofname]{%
  \proof[\scshape Proof #1]%
}{\endproof}
\theoremstyle{definition}\newtheorem{Rem}[equation]{Remark}
\theoremstyle{definition}\newtheorem{Def}[equation]{Definition}
\theoremstyle{definition}
\theoremstyle{definition}\newtheorem{Not}[equation]{Notation}
\newcommand{\N}{\mathbb{N}}
\newcommand{\F}{\mathbb{F}}
\renewcommand{\le}{\leqslant}
\renewcommand{\ge}{\geqslant}
\newcommand{\mcA}{\mathcal{A}}
\newcommand{\mcC}{\mathcal{C}}
\newcommand{\mcI}{\mathcal{I}}
\newcommand{\mcU}{\mathcal{U}}
\newcommand{\msJ}{\mathscr{J}}
\newcommand{\msS}{\mathscr{S}}
\newcommand{\msT}{\mathscr{T}}
\newcommand{\lcm}{\text{lcm}}
\newcommand{\olM}{\overline{M}}
\newcommand{\olS}{\overline{S}}
\newcommand{\whI}{\widehat{I}}
\newcommand{\whJ}{\widehat{J}}
\newcommand{\whS}{\widehat{S}}
\newcommand{\whT}{\widehat{T}}
\newcommand{\whX}{\widehat{X}}
\newcommand*\xbar[1]{%
   \hbox{%
     \vbox{%
       \hrule height 0.5pt 
       \kern0.5ex
       \hbox{%
         \kern-0.1em
         \ensuremath{#1}%
         \kern-0.1em
       }%
     }%
   }%
}
\title{The covering numbers of rings}
\date{\today}
\author{Eric Swartz}
\address{Department of Mathematics, William \& Mary, P.O. Box 8795, Williamsburg, VA 23187-8795, USA}
\email{easwartz@wm.edu}
\author{Nicholas J. Werner}
\address{Department of Mathematics, Computer and Information Science, State University of New York College at Old Westbury, Old Westbury, NY 11560, USA}
\email{wernern@oldwestbury.edu}
\begin{document}

\begin{abstract}
A cover of an associative (not necessarily commutative nor unital) ring $R$ is a collection of proper subrings of $R$ whose set-theoretic union equals $R$. If such a cover exists, then the covering number $\sigma(R)$ of $R$ is the cardinality of a minimal cover, and a ring $R$ is called $\sigma$-elementary if $\sigma(R) < \sigma(R/I)$ for every nonzero two-sided ideal $I$ of $R$.  If $R$ is a ring with unity, then we define the unital covering number $\sigma_u(R)$ to be the size of a minimal cover of $R$ by subrings that contain $1_R$ (if such a cover exists), and $R$ is $\sigma_u$-elementary if $\sigma_u(R) < \sigma_u(R/I)$ for every nonzero two-sided ideal of $R$. In this paper, we classify all $\sigma$-elementary unital rings and determine their covering numbers.  Building on this classification, we are further able to classify all $\sigma_u$-elementary rings and prove $\sigma_u(R) = \sigma(R)$ for every $\sigma_u$-elementary ring $R$.  We also prove that, if $R$ is a ring without unity with a finite cover, then there exists a unital ring $R'$ such that $\sigma(R) = \sigma_u(R')$, which in turn provides a complete list of all integers that are the covering number of a ring.  Moreover, if \[\mathscr{E}(N) := \{m : m \le N, \sigma(R) = m \text{ for some ring } R\},\]
then we show that $|\mathscr{E}(N)| = \Theta(N/\log(N))$, which proves that almost all integers are not covering numbers of a ring. 
\end{abstract}

\maketitle

\section{Introduction}\label{sect:intro}

In this article, rings are assumed to be associative, but need not be commutative nor have a multiplicative identity. Our goal is to study the ways in which a ring $R$ can be expressed as a union of proper subrings. Here, we use subring in the weakest sense: $S \subseteq R$ is a subring when $S$ is an additive group and is closed under multiplication. In particular, $S$ need not contain a multiplicative identity, even if $R$ itself is unital. A \textit{cover} of $R$ is a collection $\mcC$ of proper subrings of $R$ such that $R = \bigcup_{S \in \mcC} S$. When such a cover exists, we say that $R$ is \textit{coverable}, and define the \textit{covering number} $\sigma(R)$ to be the cardinality of a minimal cover.  If $R$ is not coverable, we set  $\sigma(R) = \infty$.

The origin of this problem comes from group theory, where there is an analogous notion of a cover (by subgroups) for a noncyclic group, and covering problems have a long and fascinating history.  That no group is the union of exactly two proper subgroups is an easy exercise (and, in fact, a Putnam Competition problem from 1969; see \cite[Chapter 3, Exercise 15]{Gallian}). The earliest known consideration of covering numbers of groups was perhaps by Scorza \cite{Scorza}, who proved that a group is the union of three proper subgroups if and only if it has a quotient isomorphic to the Klein $4$-group; a nice proof of this result, which has been rediscovered a number of times through the years, can be found in \cite{Bhargava}.  

Problems related to covering numbers of groups have received considerable attention in recent years.  Cohn \cite{Cohn} proved that there exists a group with covering number $p^n + 1$ for every prime $p$ and positive integer $n$ and conjectured that every solvable group has a covering number of this form.  This conjecture was proven by Tomkinson \cite{Tomkinson}, who also proved that there is no group with covering number equal to 7.  Covering numbers have been determined for various families of simple and almost simple groups (see, for instance, \cite{Britnell1, Britnell2, BryceFedriSerena, Holmes, Maroti, Swartz}). As noted above, no group has covering number 7, and other natural numbers exist that are not the covering number of a group (the next smallest examples being 11 \cite{DetomiLucchini} and 19 \cite{Garonzi}). All the integers $N \le 129$ that are not the covering number of a group were determined in \cite{GaronziKappeSwartz}, which also contains a good summary of the history and recent developments of the related work for groups. It is not known whether there are infinitely many positive integers that are not the covering number of a group.

A natural question then arises: what can be said about covering numbers of other algebraic structures?  The covering number of a vector space by proper subspaces was determined by Khare \cite{Khare} (see also \cite{Clark}).  The 2014 article by Kappe \cite{Kappe} provides a survey of recent research, but there have been some notable results since its publication.  Gagola III and Kappe \cite{GagolaKappe} proved that for every integer $n > 2$, there exists a loop whose covering number is exactly $n$, and Donoven and Kappe \cite{DonovenKappe} proved that the covering number of a finite semigroup that is not a group and not generated by a single element is always two. Furthermore, they showed that for each $n \ge 2$, there exists an inverse semigroup whose covering number (by inverse subsemigroups) is exactly $n$.  Recently, the covering number of modules by proper submodules was studied in \cite{KhareTikaradze} and \cite{Ghosh}.

The related question for rings has also received significant attention, especially in the last decade.  Lucchini and Mar\'oti \cite{LucchiniMaroti} determined all rings with covering number equal to three.  However, as noted by Kappe \cite[p. 86]{Kappe}, ``the solution is less simple than the group case,'' and a ring has covering number three if and only if it has a homomorphic image isomorphic to one of five rings.  In light of this result, Kappe \cite[p. 87]{Kappe} further notes that a characterization of rings which are the union of exactly four proper subrings ``does not seem to be a feasible problem for rings.''  The difficulty of this problem is further illuminated in recent work by Cohen \cite{Cohen}, in which a strategy is presented to classify unital rings with a given covering number and a partial classification is given of unital rings whose covering number is four.  Moreover, for each integer $n$, $3 \le n \le 12$, examples are known of finite rings (with unity) that have covering number $n$, and no integers have thus far been ruled out from being the covering number of a ring.  Indeed, Kappe \cite[p. 87]{Kappe} further writes, ``An interesting question would be if there are integers $n > 2$ that are not the covering number of a ring.''  Other recent works on this problem include \cite{CaiWerner, Crestani, PeruginelliWerner, SwartzWerner, Werner}.   

The goal of this paper is to provide a general method to calculate $\sigma(R)$ for any ring $R$ with a finite covering number, and consequently to determine all integers that occur as the covering number of a ring.  Our main result (Theorem \ref{thm:main}) allows one to describe the covering numbers of all unital rings. A variation on this theorem (Theorem \ref{thm:rngtoring}, Theorem \ref{thm:mainunital}) can be used to compute the covering number of any nonunital ring. Thus, we are able to characterize all the positive integers that are the covering number of a ring, and prove that the set of such integers has density 0 (Corollary \ref{cor:density}). 

Our approach to these problems is based on the following easy observation: if $R$ has a two-sided ideal $I$ such that $R/I$ admits a cover $\mcC$, then we may form a cover of $R$ by taking the inverse images of the subrings in $\mcC$ under the natural homomorphism $R \to R/I$. Thus, $\sigma(R) \le \sigma(R/I)$.  Of capital importance are those rings for which the inequality $\sigma(R) \le \sigma(R/I)$ is strict for all $I \ne \{0\}$. 

\begin{Def}\label{def:sigmaelementary}
A ring $R$ is said to be \textit{$\sigma$-elementary} if $\sigma(R) < \sigma(R/I)$ for every nonzero two-sided ideal $I$ of $R$. Note that a $\sigma$-elementary ring $R$ must be coverable, since $\sigma(R) < \sigma(\{0\}) = \infty$.
\end{Def}

Observe that if $\sigma(R)$ is finite, then either $R$ is $\sigma$-elementary, or $R$ has a proper, $\sigma$-elementary residue ring with the same covering number. Moreover, if an integer $n$ occurs as the covering number of a ring, then there exists a $\sigma$-elementary ring $R$ such that $\sigma(R)=n$. Thus, many questions about covering numbers of rings can be reduced to the case of $\sigma$-elementary rings. 

In a recent paper \cite{SwartzWernerI}, the authors introduced a new family of $\sigma$-elementary rings, called rings of AGL-type (see Definition \ref{def:AGL}) and determined their covering numbers.  In Theorem \ref{thm:main} below, we will prove that rings of AGL-type, along with three other infinite families of rings (which have been studied previously), provide a complete list of all $\sigma$-elementary rings with unity. Furthermore, we give formulas for the covering numbers of each family of $\sigma$-elementary rings.

In order to state Theorem \ref{thm:main}, we must introduce some notation. For a prime power $q$, $\F_q$ is the finite field of order $q$, and $M_n(q)$ is the $n \times n$ matrix ring with entries from $\F_q$. The Jacobson radical of a ring $R$ is denoted by $\msJ(R)$, and when $R$ contains unity, the unit group of $R$ is $R^\times$. When $R$ is a commutative ring and $M$ is an $R$-module, the idealization of $R$ and $M$, denoted $R (+) M$, is the ring \[R(+)M:=\{(r,m) \mid r \in R, m \in M\},\] with multiplication $(r_1, m_1)\cdot (r_2, m_2) = (r_1 r_2, r_1 m_2 + r_2 m_1)$. Of particular importance is the idealization $\F_q (+) \F_q^2$ of $\F_q$ with the 2-dimensional vector space $\F_q^2$. This ring can be represented as the following ring of $3 \times 3$ matrices:
\begin{equation*}
\F_q (+) \F_q^2 \cong \left\{ \begin{pmatrix} a & b & c \\ 0 & a & 0 \\ 0 & 0 & a \end{pmatrix} : a, b, c \in \F_q \right\}.
\end{equation*}

Next, we give some definitions from \cite{SwartzWernerI} related to rings of AGL-type.

\begin{Def}\label{def:AGL}
Given two powers $q_1$ and $q_2$ of a prime $p$, we define $q_1 \otimes q_2$ to be the order of the field compositum of $\F_{q_1}$ and $\F_{q_2}$, which is $\F_{q_1} \otimes_{\F_p} \F_{q_2}$. Observe that when $q_1 = p^{d_1}$ and $q_2 = p^{d_2}$, we have $q_1 \otimes q_2 = p^{\lcm(d_1, d_2)}$.  

Let $n \ge 1$ and let $q = q_1 \otimes q_2$. We define $A(n,q_1,q_2)$ to be the following subring of $M_{n+1}(q)$:
\begin{equation*}
A(n,q_1,q_2) := \left(\text{\begin{tabular}{c|c} $M_n(q_1)$ & $M_{n \times 1}(q)$ \\ \hline $0$ & $\F_{q_2}$ \end{tabular} }\right),
\end{equation*}
and call this a ring of \textit{AGL-type}. The construction and terminology for $A(n,q_1,q_2)$ were inspired by representations of the affine general linear group ${\rm AGL}(n,q)$, which is isomorphic to a subgroup of $A(n,q,q)^\times$.
\end{Def}


The formulas to compute $\sigma(R)$ require some accessory functions. For a prime power $q = p^d$, we take ${\rm Irr}(p,d)$ to be the set of all monic irreducible polynomials in $\F_p[x]$ of degree $d$.  We define 
\[ \tau(q) := \begin{cases}
               p, &\text{ if } d = 1,\\
               |{\rm Irr}(p,d)| + 1, &\text{ if } d > 1.
              \end{cases}
\]
We let $\omega$ denote the prime omega function, which counts the number of distinct prime divisors of a natural number. Note that $\omega(1) = 0$. For $q=p^d$, we define
\[ \nu(q) := \begin{cases}
              1, &\text{ if } d = 1,\\
              \omega(d), &\text{ if } d > 1,
             \end{cases}\]
which counts the number of maximal subrings of the field $\F_q$. The variation in the definitions between $\omega$ and $\nu$ is due to the fact that when $q=p$, $\{0\}$ is the only maximal subring of $\F_q$; but, when $q > p$, the maximal subrings of $\F_q$ are exactly the maximal subfields of prime index. Finally, we let $\binom{n}{k}_q$ denote the $q$-binomial coefficient, which counts the number of $k$-dimensional subspaces of $\F_q^n$:
\begin{equation*}
\binom{n}{k}_q = \frac{(q^n - 1)(q^{n-1}-1) \cdots (q^{n - (k-1)}-1)}{(q^k -1)(q^{k-1} -1)\cdots (q-1)}.
\end{equation*}

We can now state our first main result.

\begin{thm}\label{thm:main}
Let $R$ be a $\sigma$-elementary ring with unity.  Then, one of the following holds.
\begin{enumerate}
\item If $R$ is commutative and semisimple, then for some prime power $q=p^d$, $R \cong \bigoplus_{i=1}^{\tau(q)} \F_q$ and \[\sigma(R) = \tau(q) \nu(q) + d \binom{\tau(q)}{2}.\]
\item If $R$ is commutative but not semisimple, then for some prime power $q$, $R \cong \F_q (+) \F_q^2$ and \[\sigma(R) = q + 1.\]
\item If $R$ is noncommutative and semisimple, then for some prime power $q$ and integer $n \ge 2$, $R \cong M_n(q)$ and
 \[ \sigma(R) = \frac{1}{a} \prod_{k=1,\\ a \nmid k}^{n-1} (q^n - q^k) + \sum_{k=1,\\ a \nmid k}^{\lfloor n/2 \rfloor} {n \choose k}_q,\]
where $a$ is the smallest prime divisor of $n$.
\item If $R$ is noncommutative and not semisimple, then $R \cong A(n, q_1, q_2)$ and one of the two cases below holds. Let $q= q_1 \otimes q_2 = q_1^d$, and if $n \ge 2$, then let $a$ be the smallest prime divisor of $n$.
\begin{enumerate}[(i)]
\item $n=1$ and $(q_1, q_2) \ne (2, 2)$ or $(4,4)$. In this case, $\sigma(R) = q+1$.
\item $n \ge 3$, $d < n - (n/a)$, and $(n, q_1) \ne (3, 2)$. In this case,
\begin{equation*}
\sigma(R) = q^n + \binom{n}{d}_{q_1} + \omega(d).
\end{equation*}
\end{enumerate}
\end{enumerate}
\end{thm}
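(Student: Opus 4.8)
The plan is to start from the reductions recalled in the introduction. Combining the results of Neumann \cite{Neumann} and Lewin \cite{Lewin} with \cite[Theorem 3.12]{SwartzWerner}, every unital ring of finite covering number shares that covering number with a finite quotient of prime characteristic $p$ whose Jacobson radical $J$ satisfies $J^2=\{0\}$; because a $\sigma$-elementary ring has covering number strictly below that of each proper quotient, this quotient must be $R$ itself. I may therefore assume throughout that $R$ is finite of characteristic $p$ with $J^2=\{0\}$, and write $\bar R=R/J\cong\prod_i M_{n_i}(q_i)$ by Artin--Wedderburn. The four cases of the theorem are precisely the division of this data by whether $J=\{0\}$ and whether $\bar R$ is commutative, so the first job is to show that the $\sigma$-elementary hypothesis collapses $\bar R$, and then $J$, onto the short list claimed. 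Throughout, the engine is that a minimal cover may be taken to consist of maximal subrings, so in each case I would classify the maximal subrings and then solve the resulting covering problem.

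For the semisimple cases ($J=\{0\}$) the crucial remark is that a simple ring has no nonzero proper ideals and is hence automatically $\sigma$-elementary once coverable, which accounts for Case~(3): the $\sigma$-elementary matrix rings are exactly $M_n(q)$ with $n\ge 2$, and the content is the covering formula. The maximal subrings of $M_n(q)$ come in two families---stabilizers of a proper nonzero $\F_q$-subspace of $\F_q^n$, which yield the $q$-binomial terms, and the centralizer-type subrings $M_{n/a}(q^a)$ coming from $\F_{q^a}$-structures on $\F_q^n$, which yield the product term---and I would derive the formula by writing down an explicit cover from a carefully chosen subfamily (upper bound) and matching it against a lower bound from counting regular elements with irreducible characteristic polynomial, each of which lies in no subspace stabilizer and in exactly one field-type subring. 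The commutative semisimple case $R=\bigoplus\F_{q_i}$ is handled by the same template: its maximal subrings are the coordinatewise maximal-subfield subrings and the graphs of field isomorphisms between pairs of summands, and I would show that $\sigma$-elementarity forces all $q_i$ equal to a common $q$ and the number of summands equal to the threshold $\tau(q)$ beyond which a generic tuple (every coordinate primitive, no two related by an automorphism) escapes all the diagonal subrings; counting the two families then gives $\tau(q)\nu(q)+d\binom{\tau(q)}{2}$.

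For the non-semisimple cases I would exploit that $J$, being annihilated by itself, is an $\bar R$-bimodule and that $R$ is an extension determined by it, so the maximal subrings split into those pulled back from $\bar R$, those obtained by passing to a submodule inside $J$, and ``graph'' subrings that twist the bimodule action. Analyzing which extensions survive $\sigma$-elementarity is what forces the two shapes. In the commutative case it forces $\bar R=\F_q$ and pins $J$ to the plane $\F_q^2$---a larger module fails because quotienting by a line would drop its dimension and, since the dual numbers are not coverable in the relevant range, strictly lowering the dimension strictly lowers $\sigma$; covering the plane by its $q+1$ lines then gives $\sigma=q+1$. In the noncommutative case the bimodule $\F_q^n$ over $(M_n(q_1),\F_{q_2})$ produces exactly the rings $A(n,q_1,q_2)$, and I would assemble $q^n+\binom{n}{d}_{q_1}+\omega(d)$ from the graph subrings twisting the module (the $q^n$ term), the subspace-type subrings that detect the $\F_q$-structure (the $q$-binomial term), and the subfield subrings of the diagonal field $\F_{q_2}$ (the $\omega(d)$ term), the hypotheses $d<n-n/a$ and $(n,q_1)\ne(3,2)$ being exactly the regime in which no competing family of subrings becomes cheaper.

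I expect the main obstacle to be the two noncommutative covering computations, and within them the lower bounds together with the verification of $\sigma$-elementarity. The upper bounds are bookkeeping, but showing that no smaller cover exists requires identifying elements so sparsely covered---regular elements of irreducible characteristic polynomial in $M_n(q)$, and their affine analogues in $A(n,q_1,q_2)$---that each maximal subring meets only a controlled number of them, and then forcing the count against the arithmetic of the $q$-binomials and of $\tfrac1a\prod_{a\nmid k}(q^n-q^k)$. A delicate point is that the minimal cover is \emph{not} all maximal subrings but a subfamily (for instance, only subspace-stabilizers of dimension $k$ with $a\nmid k$, the others being redundant given the field-type subrings), so both that this subfamily suffices and that nothing smaller does must be proved. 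Finally, confirming $\sigma(R)<\sigma(R/I)$ for \emph{every} nonzero ideal in the non-semisimple cases is what rules out larger modules and forces the side conditions on $d$, $n$, and $(n,q_1)$ to be sharp, and I anticipate this sharpness analysis to be the most technical part.
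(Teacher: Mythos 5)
Your skeleton matches the paper's in outline---the reduction to finite characteristic $p$ with $J^2=\{0\}$, the Wedderburn--Malcev splitting $R=S\oplus J$, the cover of $A(n,q_1,q_2)$ by the conjugates of $S$ plus subspace-stabilizer and subfield subrings, and the recognition that minimality of those covers is the technical core---but there is a genuine gap at the center of the classification. The real content of the theorem is not analyzing the four pure types one at a time; it is proving that a $\sigma$-elementary ring cannot be \emph{mixed}, e.g.\ $M_2(q)\oplus M_3(q')$, or $M_n(q_1)\oplus\F_{q_2}\oplus J_{12}$ with extra matrix or field summands attached, or a ring in which several Peirce components of $J$ are nonzero at once. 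Your proposal compresses this entire problem into one sentence (``analyzing which extensions survive $\sigma$-elementarity is what forces the two shapes'') with no mechanism. The paper's mechanism occupies Sections 4--7: decompose $J=\bigoplus_{i,j}e_iJe_j$ via the idempotents of the simple summands; show that at most one off-diagonal $J_{ij}$ is nonzero, that it is then a simple bimodule with one of $S_i,S_j$ a field (Theorem 5.1), that $J_{kk}=\{0\}$ when $S_k$ is noncommutative (Proposition 5.3), and that no two summands of the relevant type are isomorphic (Propositions 6.4, 6.7); then split $R=R_1\oplus R_2\oplus R_3$ into the $A(n,q_1,q_2)$-part, the noncommutative semisimple part, and the commutative part, and prove every maximal subring respects this decomposition, so that $\sigma(R)=\min_i\sigma(R_i)$ and $\sigma$-elementarity forces $R$ to equal a single $R_i$. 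The delicate obstruction here is exactly the ``graph'' (Type $\Pi_2$) maximal subrings linking isomorphic summands, which are handled by counting arguments (there are more linked subrings than $\sigma(R)$, so some linked subring is omitted from a minimal cover and Lemma 2.2(1) applies); nothing in your proposal stands in for this, and without it the case division you start from is unjustified.

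Two smaller points. First, your remark that simplicity ``accounts for Case (3)'' is backwards: it shows $M_n(q)$ is $\sigma$-elementary, not that every noncommutative semisimple $\sigma$-elementary ring is a single matrix ring; the paper needs the classification of covering numbers of semisimple rings from Peruginelli--Werner (together with the repeated-summand exclusion above) for that direction. Second, in the commutative non-semisimple case your stated reason---``strictly lowering the dimension strictly lowers $\sigma$''---contradicts the general inequality $\sigma(R)\le\sigma(R/I)$; what actually disqualifies $\F_q(+)\F_q^m$ for $m>2$ is that $\sigma$ stays \emph{equal} to $q+1$ when one quotients down to $\F_q(+)\F_q^2$, so the strict inequality required by $\sigma$-elementarity fails.
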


Portions of Theorem \ref{thm:main} have been proved in earlier papers. The classification of commutative unital $\sigma$-elementary rings was done in \cite{SwartzWerner}, and the covering numbers of the rings in parts (1) and (2) were determined in \cite{Werner}. The formula for the covering number of $M_n(q)$ is due to Crestani, Lucchini, and Mar\'{o}ti \cite{Crestani,LucchiniMaroti}. Since $M_n(q)$ is simple, it is clearly $\sigma$-elementary, and the fact that these are the only noncommutative semisimple $\sigma$-elementary rings follows from the classification of covering numbers for finite semisimple rings done in \cite{PeruginelliWerner}. The determination of when $A(n, q_1, q_2)$ is $\sigma$-elementary, and, in that event, the computation of its covering number, were completed in \cite{SwartzWernerI}.  The content of the present paper is a proof that any $\sigma$-elementary unital ring falls into one of the four classes listed in Theorem \ref{thm:main}.

While Theorem \ref{thm:main} handles the situation where $R$ contains unity, it leaves open the problem of determining which integers are covering numbers of nonunital rings. This more general case can be reduced to the unital case by considering covers of $R$ in which every subring contains $1_R$.

\begin{Def}\label{def:sigmau}
Let $R$ be a ring with unity. If $R$ can be covered by unital subrings (i.e.\ those containing $1_R$), then the \textit{unital covering number} $\sigma_u(R)$ is defined to be the cardinality of a minimal cover by unital subrings. We say $R$ is \textit{$\sigma_u$-elementary} if $\sigma_u(R) < \sigma_u(R/I)$ for every nonzero two-sided ideal $I$ of $R$. Note that a $\sigma_u$-elementary ring necessarily contains unity, and admits a cover by unital subrings.
\end{Def}

\begin{thm}\label{thm:rngtoring}
Let $R$ be a ring without unity that has a finite cover.  Then, there exists a ring $R'$ with unity such that $\sigma(R) = \sigma_u(R')$.  Thus, every covering number of a nonunital ring by subrings occurs as the covering number of a unital ring by unital subrings.
\end{thm}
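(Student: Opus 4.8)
The plan is to realize $R$ as an ideal inside a unital ring via the Dorroh extension, and then to show that this construction turns covers of $R$ by arbitrary subrings into covers of the unitization by \emph{unital} subrings, and conversely. Concretely, I would set $R' := \Z \times R$ with coordinatewise addition and multiplication $(a,r)(b,s) = (ab,\, as + br + rs)$, where $as$ denotes the $a$-fold additive multiple of $s$. Then $R'$ is an associative ring with identity $1_{R'} = (1,0)$, and $R \cong \{0\}\times R$ sits in $R'$ as a two-sided ideal. (If a finite $R'$ is preferred, one may first use the reductions of Neumann and Lewin mentioned above to assume $R$ is finite of additive exponent $n$, and run the same argument with $\Z/n\Z$ in place of $\Z$; the multiplication stays well defined since $nr = 0$ for all $r \in R$.)

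Next I would set up a correspondence between subrings on the two sides. To each subring $S \le R$ associate the unital subring $\widehat{S} := \Z \times S = \{(a,s) : a \in \Z,\ s \in S\}$ of $R'$; using that $S$ is an additive subgroup closed under multiplication, one checks directly that $\widehat{S}$ is closed under the Dorroh multiplication, contains $(1,0)$, and is proper in $R'$ precisely when $S$ is proper in $R$. Conversely, to each unital subring $T \le R'$ associate the subring $S_T := \{r \in R : (0,r) \in T\}$ of $R$. One inequality is then immediate: if $\{S_i\}$ is a cover of $R$ by proper subrings, then for any $(a,r) \in R'$ we have $r \in S_i$ for some $i$, whence $(a,r) \in \widehat{S_i}$, so $\{\widehat{S_i}\}$ is a unital cover of $R'$ of the same size and $\sigma_u(R') \le \sigma(R)$.

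For the reverse inequality I would start from a minimal unital cover $\{T_j\}$ of $R'$ and show $\{S_{T_j}\}$ covers $R$: given $r \in R$, the element $(0,r)$ lies in some $T_j$, so $r \in S_{T_j}$. The one genuinely load-bearing step—and the place where unitality of the covering subrings is essential—is verifying that each $S_{T_j}$ is a \emph{proper} subring of $R$. I would argue by contradiction: if $S_{T_j} = R$, then $\{0\}\times R \subseteq T_j$, and since $T_j$ contains $1_{R'} = (1,0)$ it contains every multiple $(a,0)$ with $a \in \Z$; hence $(a,0)+(0,r) = (a,r) \in T_j$ for all $(a,r) \in R'$, so $T_j = R'$, contradicting properness. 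Thus $\{S_{T_j}\}$ covers $R$ by proper subrings and has size at most that of $\{T_j\}$, giving $\sigma(R) \le \sigma_u(R')$; combined with the previous paragraph this yields $\sigma(R) = \sigma_u(R')$.

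The conceptual heart of the argument is that, for covering purposes, a unital subring $T$ of $R'$ is controlled entirely by its ``$R$-part'' $S_T$: covering $(a,r)$ only requires capturing $r$ in the ideal direction, so no unital subring of $R'$ can be more efficient than a lift $\widehat{S}$. I do not expect a serious obstacle beyond pinning down this correspondence; the only subtlety to watch is the properness check above, which fails for arbitrary (non-unital) covers and is exactly what forces the passage from $\sigma$ to $\sigma_u$, rather than to $\sigma$, on the unitization.
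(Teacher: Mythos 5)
Your proof is correct, and its engine is the same as the paper's: form a unitization $\{(a,r)\}$ with multiplication $(a,r)(b,s)=(ab,as+br+rs)$, lift a cover $\{S_i\}$ of $R$ to the unital cover $\{\text{base}\times S_i\}$, and conversely slice a unital cover $\{T_j\}$ of $R'$ back down to $R$, with the properness of the slices resting exactly on $1_{R'}\in T_j$ (note that for unital $T_j$ your slice $\{r : (0,r)\in T_j\}$ coincides with the projection $\rho(T_j)$ the paper uses, by the same ``subtract $(a,0)$'' argument). The genuine difference is logistical: you take the Dorroh extension over $\Z$, which makes the construction work for an arbitrary non-unital $R$ with no preprocessing, whereas the paper first runs the reductions (Neumann--Lewin to a finite quotient, the Chinese Remainder Theorem plus Lemma \ref{lem:AMM2.2} to prime-power order, then $pR=\{0\}$) precisely so that the unitization can be taken over $\F_p$; that base ring is only well defined once $pR=\{0\}$. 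What your route buys is brevity and generality---the theorem as stated follows immediately, with no finiteness hypotheses used beyond what makes $\sigma(R)$ finite. What the paper's route buys is that $R'$ comes out finite of prime characteristic, which is the form needed when $R'$ is later fed into the classification machinery for $\sigma_u$-elementary rings (Section \ref{sect:othermain}); an infinite $R'$ would require a further reduction to a finite quotient realizing $\sigma_u(R')$, which the paper never proves in the unital-cover setting. Your parenthetical $\Z/n\Z$ variant (with $n$ the additive exponent of a finite quotient of $R$) closes that gap, so the proposal is complete on both counts.
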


Via Theorem \ref{thm:rngtoring}, we can describe the covering numbers of all rings (with or without unity) by classifying $\sigma_u$-elementary rings. Our classification is very similar to that of unital $\sigma$-elementary rings, which was given in Theorem \ref{thm:main}.

\begin{thm}\label{thm:mainunital}
Let $R$ be a $\sigma_u$-elementary ring. 
\begin{itemize}
    \item[(1)]If $R$ is $\sigma$-elementary, then $R$ is one of the rings listed in Theorem \ref{thm:main}, with the exception that  $R \not\cong \bigoplus_{i=1}^p \F_p$. Moreover, any ring $R \not\cong \bigoplus_{i=1}^p \F_p$ listed in Theorem \ref{thm:main} is $\sigma_u$-elementary. 
    \item[(2)]If $R$ is not $\sigma$-elementary, then either $R \cong \bigoplus_{i=1}^{p+1} \F_p$ with $\sigma_u(R) = p+ \binom{p}{2}$, or $R \cong A(1,2,2)$ with $\sigma_u(R)=3$.  
\end{itemize}    
In particular, if $R$ is $\sigma_u$-elementary, then $\sigma_u(R) = \sigma(R)$, and the integers that are covering numbers of rings with unity are the same as the integers that are unital covering numbers of rings with unity.
\end{thm}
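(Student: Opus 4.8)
The plan is to leverage two elementary monotonicity facts and then funnel everything into the known classification of Theorem \ref{thm:main}. First I record that a cover by unital subrings is in particular a cover, so $\sigma(R) \le \sigma_u(R)$, and that pulling a unital cover of $R/I$ back along the projection $R \to R/I$ gives a unital cover of $R$, so $\sigma_u(R) \le \sigma_u(R/I)$ for every ideal $I$. Exactly as for $\sigma$, a Neumann--Lewin finiteness reduction lets me assume a $\sigma_u$-elementary ring is finite of prime characteristic, and choosing an ideal $J$ maximal subject to $\sigma(R/J)=\sigma(R)$ (respectively $\sigma_u(R/J)=\sigma_u(R)$) produces a $\sigma$-elementary (respectively $\sigma_u$-elementary) quotient of the same covering number.

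The bridge between the two notions is the following lemma, which I would prove by direct inspection of the minimal covers built for the rings in Theorem \ref{thm:main}: for every $\sigma$-elementary $R \not\cong \bigoplus_{i=1}^p \F_p$ the standard minimal cover consists of unital subrings --- the subspace-stabilizer and subfield-type maximal subrings of $M_n(q)$, the merging and subfield subrings used for $\F_q^{\tau(q)}$ with $q>p$, and the maximal subrings used for $\F_q(+)\F_q^2$ and for $A(n,q_1,q_2)$ all contain $1_R$ --- whence $\sigma_u(R) \le \sigma(R)$ and so $\sigma_u(R)=\sigma(R)$; by contrast $\bigoplus_{i=1}^p \F_p$ admits \emph{no} cover by unital subrings, since a unital subalgebra of $\F_p^p$ is exactly the functions constant on the blocks of a partition of the $p$ coordinates, so the all-distinct vector $(0,1,\dots,p-1)$ lies in no proper unital subring. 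This yields part (1) at once: if $R$ is $\sigma$-elementary with $R\not\cong\bigoplus_{i=1}^p\F_p$, then for every nonzero ideal $I$,
\[ \sigma_u(R) = \sigma(R) < \sigma(R/I) \le \sigma_u(R/I), \]
so $R$ is $\sigma_u$-elementary, while $\bigoplus_{i=1}^p\F_p$ is not even $\sigma_u$-coverable and hence not $\sigma_u$-elementary.

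For part (2), let $R$ be $\sigma_u$-elementary but not $\sigma$-elementary, and let $\olR=R/J$ be a $\sigma$-elementary quotient with $\sigma(\olR)=\sigma(R)$; since $R$ is not $\sigma$-elementary, $J\neq 0$. By Theorem \ref{thm:main} $\olR$ is one of the listed rings, and if $\olR\not\cong\bigoplus_{i=1}^p\F_p$ the bridge lemma gives $\sigma_u(\olR)=\sigma(\olR)$ and then
\[ \sigma_u(R) < \sigma_u(\olR) = \sigma(\olR) = \sigma(R) \le \sigma_u(R), \]
a contradiction (the strict inequality is $\sigma_u$-elementarity applied to the nonzero ideal $J$). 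Hence $\olR\cong\bigoplus_{i=1}^p\F_p$ and $\sigma(R)=p+\binom{p}{2}$. The remaining, and main, task is to classify the $\sigma_u$-elementary rings admitting $\F_p^p$ as $\sigma$-elementary reduction, i.e.\ fitting in $0\to J\to R\to \F_p^p\to 0$ with $J\neq 0$. The key leverage is that any element of $R$ whose image in $\F_p^p$ has pairwise-distinct coordinates lies in no proper unital subring of $\F_p^p$, so a unital subring of $R$ covering it must surject onto $\F_p^p$ and therefore meet $J$ properly; thus the entire unital cover is forced to be built from ``graph-type'' subrings over the extension, and strict growth of $\sigma_u$ under every nonzero quotient forces both $J$ and $R$ to be minimal. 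Carrying this out, the semisimple case (where the simple factors surviving in the quotient are copies of $\F_p$, and extra factors must be excluded) yields $R\cong\F_p^{p+1}$, whose unital subrings are the partition subalgebras, whose $\binom{p+1}{2}$ maximal ones form an irredundant cover, and for which $\sigma_u=\binom{p+1}{2}=p+\binom{p}{2}$; the non-semisimple case forces $p=2$ and $R\cong A(1,2,2)$, the upper-triangular $2\times 2$ matrices over $\F_2$, with $\sigma_u=3$. I expect this classification of extensions --- in particular ruling out larger or noncommutative semisimple $R$ --- to be the main obstacle.

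Finally I would assemble the ``in particular'' assertions. Combining the bridge lemma (case (1)) with the computations $\sigma_u(\F_p^{p+1})=p+\binom{p}{2}=\sigma(\F_p^{p+1})$ and $\sigma_u(A(1,2,2))=3=\sigma(A(1,2,2))$ (case (2)) gives $\sigma_u(R)=\sigma(R)$ for every $\sigma_u$-elementary $R$. For equality of the attainable integer sets, any finite value $\sigma_u(R)$ reduces to a $\sigma_u$-elementary quotient and hence equals a value of $\sigma$; conversely any finite value $\sigma(R)$ reduces to a $\sigma$-elementary quotient $R'$, and either $R'\not\cong\bigoplus_{i=1}^p\F_p$, so the value equals $\sigma_u(R')$, or $R'\cong\bigoplus_{i=1}^p\F_p$, in which case the same value $p+\binom{p}{2}$ is realized as $\sigma_u(\F_p^{p+1})$. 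Thus the two sets coincide, which is exactly the remaining claim.
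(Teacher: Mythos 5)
Your treatment of part (1) is correct and essentially the paper's: you verify that the minimal covers constructed for the rings of Theorem \ref{thm:main} consist of unital subrings (this is Proposition \ref{prop:whensigmaissigmau} in the paper), and your direct argument that $\bigoplus_{i=1}^p \F_p$ admits no unital cover (unital subrings of $\F_p^p$ are partition subalgebras, so $(0,1,\dots,p-1)$ lies in none of them) is correct and arguably cleaner than the paper's appeal to the fact that any minimal cover of this ring must use every maximal subring. Likewise, your reduction in part (2) showing that a $\sigma_u$-elementary, non-$\sigma$-elementary $R$ must have a $\sigma$-elementary quotient isomorphic to $\bigoplus_{i=1}^p\F_p$, hence $\sigma(R)=p+\binom{p}{2}$, matches Lemma \ref{lem:R/I}, and your closing derivation of the ``in particular'' assertions is fine.

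The genuine gap is that you never prove the core of part (2): that such an $R$ must be isomorphic to $\bigoplus_{i=1}^{p+1}\F_p$ or to $A(1,2,2)$. You write that ``carrying this out'' yields these two rings and that you ``expect this classification of extensions \dots to be the main obstacle'' --- but that classification \emph{is} the theorem; everything else is bookkeeping. Your one-sentence leverage (a unital subring covering an element with pairwise-distinct coordinates must surject onto $\F_p^p$, so the cover consists of ``graph-type'' subrings and minimality forces $J$ and $R$ to be small) is not an argument: it does not rule out additional simple summands $S_k\not\cong\F_p$ of $R/\msJ(R)$, nor nonzero Peirce components $J_{ij}$ linking the $\F_p$-summands to such $S_k$, nor $(S_i,S_i)$-components of the radical, and it does not explain why the non-semisimple case collapses to $p=2$. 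The paper spends the bulk of Section \ref{sect:othermain} on exactly these points: after Lemma \ref{lem:R/I}(3) disposes of the case where $\bigoplus_{i=1}^{p+1}\F_p$ occurs as a residue ring, Lemma \ref{lem:unitalPi1} pins down which unital maximal subrings of $S$ can fail to contain $\bigoplus_{i\le p}S_i$, Proposition \ref{prop:lastbigprop} kills $J_{ii}$ for $i\le p$ and the mixed components $J_{ij}$ with $i\le p<j$ via an explicit element construction exploiting that $\F_p$ has no nontrivial automorphisms, Proposition \ref{prop:R=R1} shows every unital maximal subring respects a direct-sum decomposition so that the $\sigma_u$-analogue of Lemma \ref{lem:AMM2.2} forces $R=\bigl(\bigoplus_{i\le p}S_i\bigr)\oplus J$, and Proposition \ref{prop:R=A(1,2,2)} then invokes the $\sigma_u$-version of Theorem \ref{thm:Jij=0} together with the bound $\sigma_u(A(1,p,p))=p+1$ to force $p+\binom{p}{2}\le p+1$, i.e.\ $p=2$. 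None of this, nor any substitute for it, appears in your proposal, so part (2) remains unproven.
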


We remark that Theorem \ref{thm:mainunital} is likely of independent interest: for many, the definition of a ring includes the existence of a multiplicative identity, and subrings likewise are required to contain this unital element; see, e.g., \cite[Chapter II]{Lang}.  (Those who adopt this convention often refer to a ring without unity as an \textit{rng}.)  In this setting, the only covers would indeed be covers by unital subrings.    

In light of these results, any integer that is the covering number of a ring (with or without unity) or unital covering number of a ring with unity can be computed by the formulas in Theorem \ref{thm:main}.

\begin{cor}\label{cor:unitalsuffices}
Any integer that is the covering number or unital covering number of a ring occurs as the covering number of a ring with unity.  
\end{cor}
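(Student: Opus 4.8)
The plan is to derive the corollary directly from Theorems \ref{thm:rngtoring} and \ref{thm:mainunital} by a short case analysis on how the given integer arises. Let $m$ be an integer that is either the covering number or the unital covering number of some ring; since $m$ is finite, in each case the ring in question admits a finite cover.

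First I would dispose of the trivial case: if $m = \sigma(R)$ for some ring $R$ with unity, then $m$ is already realized as the covering number of a unital ring and nothing remains to prove. Next, suppose $m = \sigma(R)$ for a ring $R$ \emph{without} unity. As $m$ is finite, $R$ has a finite cover, so Theorem \ref{thm:rngtoring} applies and yields a unital ring $R'$ with $\sigma_u(R') = \sigma(R) = m$; this reduces the non-unital case to the unital-covering-number case. Finally, suppose $m = \sigma_u(R)$ for some ring $R$ with unity (which now also absorbs the value produced in the preceding step). Here I would invoke the concluding assertion of Theorem \ref{thm:mainunital}, that the set of integers occurring as unital covering numbers of unital rings coincides with the set of integers occurring as covering numbers of unital rings; consequently $m$ is the covering number of some ring with unity, as required.

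I do not expect a genuine obstacle, since all of the substantive work has been absorbed into the two cited theorems and the argument is essentially bookkeeping. The only two points needing care are verifying that the hypothesis of Theorem \ref{thm:rngtoring}---the existence of a \emph{finite} cover of the non-unital ring---is automatic once $m$ is a finite integer, and invoking the correct direction of the set equality in Theorem \ref{thm:mainunital}, namely that every unital covering number of a unital ring is also an ordinary covering number of a unital ring. With these observations in place, the three cases together show that every integer realized as a covering number or unital covering number of any ring is realized as $\sigma(R')$ for a suitable ring $R'$ with unity.
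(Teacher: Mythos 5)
Your proposal is correct and matches the paper's intended argument: the paper presents Corollary \ref{cor:unitalsuffices} as an immediate consequence of Theorem \ref{thm:rngtoring} (reducing the non-unital case to a unital covering number) and the final assertion of Theorem \ref{thm:mainunital} (identifying unital covering numbers of unital rings with ordinary covering numbers of unital rings), which is precisely your three-case bookkeeping. Your two points of care---that finiteness of $m$ guarantees the finite-cover hypothesis of Theorem \ref{thm:rngtoring}, and that the correct direction of the set equality in Theorem \ref{thm:mainunital} is being invoked---are exactly the right things to check and both hold.
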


This, in turn, allows us to answer Question 1.2 from \cite{SwartzWerner}:

\begin{cor}\label{cor:not13}
 There does not exist an associative ring with covering number $13$.
\end{cor}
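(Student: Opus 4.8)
The plan is to combine the classification in Theorem~\ref{thm:main} with the reduction to unital rings. By Corollary~\ref{cor:unitalsuffices}, every integer that is the covering number of a ring (with or without unity) is already the covering number of a ring with unity, so it suffices to prove that $13$ is not the covering number of any unital ring. If $R$ is a unital ring with $\sigma(R)$ finite, then repeatedly passing to quotients that preserve the covering number (using $\sigma(R)\le\sigma(R/I)$ together with the finiteness reductions recalled in Section~\ref{sect:intro}) produces a unital $\sigma$-elementary ring $\bar R$ with $\sigma(\bar R)=\sigma(R)$. Hence every covering number of a unital ring equals $\sigma(\bar R)$ for some $\bar R$ appearing in Theorem~\ref{thm:main}, and the task reduces to checking that none of the four formulas there ever equals $13$.

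The three ``small-parameter'' families are quick to dispatch. In case (2) and in case (4)(i) one has $\sigma(R)=q+1$ with $q$ a prime power, and $q+1=13$ forces $q=12$, which is not a prime power. In case (4)(ii), $\sigma(R)=q^n+\binom{n}{d}_{q_1}+\omega(d)$ with $q=q_1^d\ge 2$ and $n\ge 3$, so $q^n\ge 8$ and $\binom{n}{d}_{q_1}\ge\binom{n}{1}_{q_1}\ge 7$, giving $\sigma(R)\ge 15>13$. For the matrix rings in case (3) I would simply record the values no larger than $16$: using $\sigma(M_2(q))=\tfrac{1}{2}(q^2+q+2)$ one gets $4,7,11,16$ for $q=2,3,4,5$, while $\sigma(M_3(2))=15$ and every other $M_n(q)$ with $n\ge 3$ or $q\ge 5$ is strictly larger; thus $13$ is skipped.

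The main case is (1), where $R\cong\bigoplus_{i=1}^{\tau(q)}\F_q$ and $\sigma(R)=\tau(q)\nu(q)+d\binom{\tau(q)}{2}$ for $q=p^d$. When $d=1$ this is the triangular number $\tfrac{1}{2}p(p+1)$, and $\tfrac{1}{2}p(p+1)=13$ has no integer solution, so $13$ does not occur. When $d\ge 2$, I use $\nu(q)\ge 1$ together with $d\ge 2$ to obtain the lower bound
\[
\sigma(R)\;\ge\;\tau(q)+2\binom{\tau(q)}{2}\;=\;\tau(q)^2 .
\]
Since $\tau(q)\ge 4$ then forces $\sigma(R)\ge 16>13$, only $\tau(q)\in\{2,3\}$ remains. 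These correspond to the unique parameter choices $(p,d)=(2,2)$, where $|{\rm Irr}(2,2)|=1$ and $\sigma(R)=4$, and $(p,d)=(2,3)$, where $|{\rm Irr}(2,3)|=2$ and $\sigma(R)=12$; neither equals $13$.

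This exhausts all four families, so $13$ is not a covering number. The only genuine obstacle is case (1): it is a two-parameter family, so a priori one must rule out all pairs $(p,d)$. The quadratic lower bound $\sigma(R)\ge\tau(q)^2$ for $d\ge 2$, combined with the fact that $\tau(q)\le 3$ pins down $(p,d)$ completely (since $|{\rm Irr}(p,d)|\in\{1,2\}$ happens only for $(2,2)$ and $(2,3)$), is exactly what collapses this to the two explicit checks above; the $d=1$ subcase is handled separately by the triangular-number identity.
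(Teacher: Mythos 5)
Your proposal is correct and follows exactly the route the paper intends for Corollary~\ref{cor:not13}: reduce to covering numbers of unital rings via Corollary~\ref{cor:unitalsuffices}, pass to a $\sigma$-elementary quotient, and verify that none of the four formulas of Theorem~\ref{thm:main} can equal $13$. Your arithmetic in each case (the prime-power check $q+1\ne 13$, the bound $\sigma\ge 15$ in case (4)(ii), the list $4,7,11,15,16$ of small values in case (3), and the analysis $\sigma\ge\tau(q)^2$ with the two exceptional pairs $(p,d)=(2,2),(2,3)$ giving $4$ and $12$ in case (1)) is accurate, so the argument is complete.
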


Even more, Theorems \ref{thm:main}, \ref{thm:rngtoring}, and \ref{thm:mainunital} allow us to prove that there are infinitely many integers that are not covering numbers (or unital covering numbers) of rings. In fact, almost all integers are not covering numbers of rings.

\begin{cor}\label{cor:density}
Let $\mathscr{E}(N) := \{m : m \le N, \sigma(R) = m \text{ for some ring } R\}.$
Then, for all $N \ge 5$, \[\frac{N}{\log N} < |\mathscr{E}(N)| < \frac{144 N}{\log N},\]
where $\log N$ denotes the binary (base $2$) logarithm of $N$.  In particular, we have $|\mathscr{E}(N)| = \Theta(N/\log(N))$ and
\[ \lim_{N \to \infty} \frac{|\mathscr{E}(N)|}{N} = 0.\]
\end{cor}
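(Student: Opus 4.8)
The plan is to first reduce the computation of $\mathscr{E}(N)$ to counting the distinct values produced by the four formulas in Theorem \ref{thm:main}. By Corollary \ref{cor:unitalsuffices}, every integer that is a covering number of a ring occurs as $\sigma(R)$ for some unital ring $R$, so $\mathscr{E}(N) = \{m \le N : m = \sigma(R) \text{ for some unital } R\}$. Using the reductions recalled in the introduction (a finite covering number forces a finite quotient of the same covering number, after which one passes to a quotient by a maximal ideal that preserves the covering number), every such $m$ is the covering number of a \emph{unital $\sigma$-elementary} ring, and hence, by Theorem \ref{thm:main}, equals the value of one of the four displayed formulas. Thus $\mathscr{E}(N)$ is precisely the set of integers $\le N$ of the form (1)--(4).

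For the lower bound, I would use only family (2): for every prime power $q$ with $q+1 \le N$, the ring $\F_q (+) \F_q^2$ has covering number $q+1$ (Theorem \ref{thm:main}(2)), so $q+1 \in \mathscr{E}(N)$. These values are distinct, so $|\mathscr{E}(N)|$ is at least the number $\Pi(N-1)$ of prime powers at most $N-1$. An effective lower bound for the prime-counting function (e.g. $\pi(x) > x/\ln x$ for $x \ge 17$) gives $\Pi(N-1) \ge \pi(N-1) > N/\log_2 N$ for $N \ge 18$, and the finitely many cases $5 \le N \le 17$ are checked by hand; note that $N=5$ already requires the genuine prime power $q=4$, since $5 = 4+1$.

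For the upper bound I would split on the size of $N$. When $N \le 2^{128}$ one has $\log_2 N \le 128$, so the trivial estimate $|\mathscr{E}(N)| < N \le 128\,N/\log_2 N$ suffices. When $N > 2^{128}$, I would bound each family separately. Families (2) and (4)(i) produce only integers of the form $q+1$ with $q$ a prime power, contributing at most $\Pi(N-1) < 1.26\,(N-1)/\ln(N-1) + O(\sqrt{N})$ values via an effective upper bound for $\pi$. For each remaining family the covering number grows at least quadratically in the relevant field size, so that only $O(\sqrt{N})$ (or fewer) values fall below $N$: in family (1), $\sigma \ge p(p+1)/2$ when $d=1$, and using $|{\rm Irr}(p,d)| = \tfrac{1}{d}\sum_{e \mid d}\mu(e)p^{d/e} \approx q/d$ one finds $\sigma \approx q^2/(2d)$ when $d>1$; in family (3) the smallest case $M_2(q)$ already gives $\sigma = (q^2+q+2)/2$, with larger $n$ growing faster; and in family (4)(ii) the term $q^n = q_1^{dn}$ with $n \ge 3$ forces $q_1 \le N^{1/3}$, leaving at most $O(N^{1/3}\,\mathrm{polylog}(N))$ values. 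Summing these, the total is dominated by the $q+1$ term and stays well below $128\,N/\log_2 N$. The two consequences $|\mathscr{E}(N)| = \Theta(N/\log N)$ and $\lim_{N\to\infty} |\mathscr{E}(N)|/N = 0$ then follow immediately from the two-sided bound.

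The main obstacle will be the bookkeeping in the upper bound for $N > 2^{128}$: one must verify rigorously that families (1), (3), and (4)(ii) contribute only $o(N/\log N)$, which requires controlling the number-of-irreducibles function $\tau(q)$, the $q$-binomial coefficients $\binom{n}{k}_q \approx q^{k(n-k)}$, and the matrix-ring product, and confirming in each case that $\sigma$ grows at least like $q^2$. The generous constant $128$ is what makes this tractable, since it lets the trivial estimate absorb all $N \le 2^{128}$ and requires only crude constants in the genuine counting regime.
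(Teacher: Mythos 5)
Your proposal is correct, and its skeleton is the same as the paper's: reduce via Corollary \ref{cor:unitalsuffices} and Theorem \ref{thm:main} to counting integers $\le N$ realized by the four formulas, obtain the lower bound from the values $q+1$ ($q$ a prime power) together with an effective bound on $\pi$, and obtain the upper bound by counting parameter tuples family-by-family ($q$ for the commutative cases, pairs $(q,n)$ with $q \le \sqrt{2N}$ for $M_n(q)$, triples $(q_1,n,d)$ with $q \le N^{1/3}$ for $A(n,q_1,q_2)$). The genuine divergence is in how the constant $128$ is produced. The paper proves uniform explicit per-family bounds valid for all $N>1$ --- at most $8N/\log N$ values each from the field formula and from $q+1$ (Lemma \ref{lem:easybounds}, via $\Pi(x) < 8x/\log x$ of Lemma \ref{lem:primepowerbound}), at most $40N/\log N$ from $M_n(q)$ (Lemma \ref{lem:Mnqnumberbound}), and at most $72N/\log N$ from $A(n,q_1,q_2)$ (Lemma \ref{lem:AGLnumberbound}) --- and then sums $8+8+40+72=128$. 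You instead split at $2^{128}$: below the threshold the trivial bound $|\mathscr{E}(N)| < N \le 128N/\log N$ does everything, and above it only crude asymptotic counts are needed, since the total is dominated by $\Pi(N-1) \approx 1.8\,N/\log N$. Your route buys a real simplification --- none of the constants $8$, $40$, $72$ ever needs to be computed, and in fact any constant $c$ with threshold $2^c$ would work --- at the cost of the bound carrying no information for $N \le 2^{128}$ and of still needing the same underlying growth estimates in the large-$N$ regime: in particular, for family (3) it is not enough that $q \le \sqrt{2N}$; you must also bound the number of admissible $n$ for each $q$, which is exactly where the paper invokes Lemma \ref{lem:prodlowerbound} to get $n \le 2+\sqrt{2\log N}$ (your appeal to $\binom{n}{k}_q \approx q^{k(n-k)}$ alone does not control the product term, though the conclusion $O(\sqrt{N})$ values is right). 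Modulo writing out that bookkeeping, which you correctly identify as the remaining work, the argument goes through.
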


This paper is organized as follows.  For the majority of the article, we will focus on rings with unity. In Section \ref{sect:prelim}, we collect a number of background results that will be used in the rest of the article.  Section \ref{sect:Peirce} lays the groundwork for the proof of Theorem \ref{thm:main} by studying the Peirce decomposition of the Jacobson radical of a $\sigma$-elementary ring and providing numerical bounds of covering numbers in certain cases.  Section \ref{sect:J} is dedicated to proving that most terms in the Peirce decomposition of the Jacobson radical of a $\sigma$-elementary ring must in fact be $\{0\}$ (so, a $\sigma$-elementary ring cannot be ``too far'' from being semisimple), and Section \ref{sect:summands} in turn places severe restrictions on the direct summands of the semisimple residue of a $\sigma$-elementary ring.  This portion of the work culminates in Section \ref{sect:mainproof}, where we use the results of the previous sections to prove Theorem \ref{thm:main}.  

In Section \ref{sect:othermain}, we deal with the general case where rings need not contain a multiplicative identity. After proving Theorem \ref{thm:rngtoring}, we concentrate on proving Theorem \ref{thm:mainunital}, which is accomplished using the methods developed in earlier sections. Finally, in a short post-script (Section \ref{sect:E(N)bounds}), we employ the formulas for $\sigma(R)$ in Theorem \ref{thm:main} to prove Corollary \ref{cor:density}.   


\section{Frequently used results}\label{sect:prelim}
In this section, we summarize a number of results that will be referenced frequently later in the paper. The first lemma is elementary, and will be taken for granted throughout the article.

\begin{lem}\label{lem:basics}
Let $R$ be a ring with unity.
\begin{enumerate}[(1)]
\item $R$ is coverable if and only if $R$ cannot be generated (as a ring) by a single element.
\item If $R$ is noncommutative, then $R$ is coverable.
\item For any two-sided ideal $I$ of $R$, a cover of $R/I$ can be lifted to a cover of $R$. Hence, $\sigma(R) \le \sigma(R/I)$.
\item If each proper subring of $R$ is contained in a maximal subring, then we may assume that any minimal cover of $R$ consists of maximal subrings.
\end{enumerate}
\end{lem}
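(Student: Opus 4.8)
The plan is to dispatch the four parts in order, each reducing to an elementary observation about singly-generated subrings or about preimages under the quotient map. Throughout, I would write $\langle r \rangle$ for the subring generated by an element $r \in R$; since our subrings need not contain $1_R$, this is precisely the set of $\Z$-linear combinations of the positive powers $r, r^2, r^3, \dots$. For part (1), I would prove the two implications by contraposition. If $R = \langle r \rangle$ for some $r$, then any subring containing $r$ already contains all of $\langle r \rangle = R$, so the only subring containing $r$ is $R$ itself; consequently $r$ lies in no proper subring, and no collection of proper subrings can cover $R$. Conversely, if $R$ is not singly generated, then $\langle r \rangle$ is a proper subring for every $r$, and the (possibly infinite) family $\{\langle r \rangle : r \in R\}$ is a cover, since each $r$ lies in $\langle r \rangle$. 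The only point requiring care is to recall that coverability asks merely for the \emph{existence} of a cover, with no finiteness constraint, so this family is legitimate. Part (2) then follows by combining part (1) with the observation that $\langle r \rangle$ is always commutative, being spanned by the mutually commuting powers of $r$; hence a noncommutative $R$ can never equal $\langle r \rangle$, so it is not singly generated and part (1) yields coverability.

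For part (3), let $\pi : R \to R/I$ be the canonical surjection and let $\{S_\alpha\}$ be a cover of $R/I$ by proper subrings. Each preimage $\pi^{-1}(S_\alpha)$ is a subring of $R$, and it is proper: surjectivity of $\pi$ gives $\pi(\pi^{-1}(S_\alpha)) = S_\alpha \ne R/I$, so $\pi^{-1}(S_\alpha) \ne R$. For any $x \in R$ we have $\pi(x) \in S_\alpha$ for some $\alpha$, whence $x \in \pi^{-1}(S_\alpha)$; thus the preimages cover $R$. Finally, the identity $\pi(\pi^{-1}(S_\alpha)) = S_\alpha$ shows that $\alpha \mapsto \pi^{-1}(S_\alpha)$ is injective, so the lifted cover has the same cardinality as the original, giving $\sigma(R) \le \sigma(R/I)$.

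For part (4), I would start from a minimal cover $\{S_1, \dots, S_k\}$ with $k = \sigma(R)$ and use the hypothesis to choose, for each $i$, a maximal subring $M_i \supseteq S_i$. Then $\bigcup_i M_i \supseteq \bigcup_i S_i = R$, so $\{M_1, \dots, M_k\}$ is again a cover, of size at most $k$; by minimality its size is exactly $k$, which forces the $M_i$ to be distinct and produces a minimal cover by maximal subrings. I expect no genuine obstacle in any part; the closest thing to a subtlety is the reminder in part (1) that covers may be infinite, which is exactly what makes ``cannot be singly generated'' equivalent to coverability, together with the distinctness bookkeeping in part (4), which is immediate from minimality.
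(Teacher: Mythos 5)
Your proof is correct: the paper itself offers no proof of this lemma (it is declared ``straightforward to prove'' and used without attribution), and your arguments supply precisely the standard reasoning intended. In particular, the description of $\langle r\rangle$ as $\Z$-linear combinations of positive powers of $r$ (so that (1) and (2) follow, with the cover $\{\langle r\rangle : r\in R\}$ allowed to be infinite), the preimage argument with injectivity of $S_\alpha \mapsto \pi^{-1}(S_\alpha)$ for (3), and the replacement of each member of a minimal cover by a maximal subring containing it for (4) are exactly the routine verifications the authors had in mind.
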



By Lemma \ref{lem:basics}(4), maximal subrings of $R$ are important in forming minimal covers. The next lemma provides conditions under which a maximal subring is guaranteed to be part of every cover of a ring.

\begin{lem}\label{lem:SigmaElementary}
Let $R$ be a coverable ring, and let $\mcC$ be a minimal cover of $R$. 
\begin{enumerate}[(1)]
\item \cite[Lemma 2.1]{Werner} If $M$ is a maximal subring of $R$ and $M \notin \mcC$, then $\sigma(M) \leq \sigma(R)$.

\item \cite[Lemma 2.2]{SwartzWerner} Let $S$ be a proper subring of $R$ such that $R=S \oplus I$ for some two-sided ideal $I$ of $R$. If $\sigma(R) < \sigma(R/I)$, then $S \subseteq T$ for some $T \in \mcC$. If, in addition, $S$ is a maximal subring of $R$, then $S \in \mcC$.

\end{enumerate}
\end{lem}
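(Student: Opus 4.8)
The plan is to prove both parts by the same elementary device: intersect the ambient minimal cover $\mcC$ with the relevant subring, and then extract properness from maximality (for part (1)) or from an isomorphism (for part (2)). Neither part requires any structure theory; the whole content is in checking that the resulting intersections are proper subrings, so that they constitute a genuine cover.

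For part (1), I would start from the observation that since $\mcC$ covers $R$ and $M \subseteq R$, the family $\{M \cap C : C \in \mcC\}$ covers $M$. The key point is that each $M \cap C$ is a \emph{proper} subring of $M$: if instead $M \cap C = M$, then $M \subseteq C$, and because $M$ is a maximal subring while $C \in \mcC$ is proper, this would force $M = C$, contradicting $M \notin \mcC$. (This is exactly where the hypothesis $M \notin \mcC$ is used.) Hence $\{M \cap C : C \in \mcC\}$ is a cover of $M$ by proper subrings with at most $|\mcC| = \sigma(R)$ distinct members, which shows $M$ is coverable and $\sigma(M) \le \sigma(R)$.

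For part (2), I would first record that $S \cong R/I$: the quotient map $R \to R/I$ restricts on $S$ to a ring homomorphism that is injective (because $S \cap I = \{0\}$) and surjective (because $S + I = R$), hence an isomorphism, so $\sigma(S) = \sigma(R/I)$. Now argue by contradiction. Suppose $S \not\subseteq T$ for every $T \in \mcC$. Then each $S \cap T$ is a proper subring of $S$, and $\{S \cap T : T \in \mcC\}$ covers $S$, whence $\sigma(S) \le |\mcC| = \sigma(R)$. Combining this with $\sigma(S) = \sigma(R/I)$ yields $\sigma(R/I) \le \sigma(R)$, contradicting the hypothesis $\sigma(R) < \sigma(R/I)$. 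Therefore $S \subseteq T$ for some $T \in \mcC$. For the final assertion, if $S$ is in addition a maximal subring, then $S \subseteq T \subsetneq R$ with $T$ proper forces $S = T$ by maximality, so $S = T \in \mcC$.

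The essential realization in both cases is the intersection trick together with the identification $\sigma(S) = \sigma(R/I)$; once these are in place the arguments are short. Consequently there is no serious obstacle here, and the only points demanding care are verifying that the intersections $M \cap C$ (respectively $S \cap T$) are proper subrings and confirming that the restriction of the quotient map to $S$ is indeed a ring isomorphism rather than merely an additive one.
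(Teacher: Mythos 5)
Your proof is correct, and it is essentially the argument behind the cited results: part (1) is the standard intersection trick (each $M \cap C$ is proper in $M$ by maximality and $M \notin \mcC$), and part (2) combines the isomorphism $S \cong R/I$ induced by the direct sum decomposition with the same intersection argument to derive a contradiction with $\sigma(R) < \sigma(R/I)$. The paper itself only cites these lemmas from \cite{Werner} and \cite{SwartzWerner}, and your argument matches those proofs.
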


As shown in \cite[Theorem 2.2]{Werner}, when $R$ is a direct sum of rings, $\sigma(R)$ can be determined from the covering numbers of the summands of $R$ as long as all maximal subrings of $R$ respect the direct sum decomposition. While not explicitly stated in \cite{Werner}, the analogous result for unital maximal subrings and unital coverings of $R$ is also true, and can be proved in the same manner as \cite[Theorem 2.2]{Werner}.

\begin{lem}\label{lem:AMM2.2}\cite[Theorem 2.2]{Werner}
Let $R = \bigoplus_{i=1}^t R_i$ for rings $R_1, \ldots, R_t$. Assume that each maximal subring $M$ of $R$ has the form
\begin{equation*}
M = R_1 \oplus \cdots \oplus R_{i-1} \oplus M_i \oplus R_{i+1} \oplus \cdots \oplus R_t
\end{equation*}
for some $1 \le i \le t$. Then, $\sigma(R) = \min_{1 \le i \le t}\{\sigma(R_i)\}$.  Furthermore, if each $R_i$ is a unital ring and each unital maximal subring has the above form (with $M_i$ itself unital), then $\sigma_u(R) = \min_{1 \le i \le t}\{\sigma_u(R_i)\}$.
\end{lem}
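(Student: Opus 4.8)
The plan is to prove the two inequalities $\sigma(R) \le \min_{1 \le i \le t} \{\sigma(R_i)\}$ and $\sigma(R) \ge \min_{1 \le i \le t}\{\sigma(R_i)\}$ separately. The upper bound is immediate from the quotient principle: for each index $i$, the summand $I_i := \bigoplus_{j \ne i} R_j$ is a two-sided ideal of $R$ with $R/I_i \cong R_i$, so Lemma \ref{lem:basics}(3) gives $\sigma(R) \le \sigma(R/I_i) = \sigma(R_i)$. Since this holds for every $i$, we obtain $\sigma(R) \le \min_{1 \le i \le t}\{\sigma(R_i)\}$.

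For the lower bound I would argue by a pigeonhole-style contradiction. By Lemma \ref{lem:basics}(4), I may assume a minimal cover $\mcC$ of $R$ consists of maximal subrings, and by hypothesis each $M \in \mcC$ has the special form $R_1 \oplus \cdots \oplus M_i \oplus \cdots \oplus R_t$ for some index $i$ and some maximal subring $M_i$ of $R_i$. Partition $\mcC = \bigcup_{i=1}^t \mcC_i$, where $\mcC_i$ collects those members whose distinguished coordinate is the $i$-th one; thus $|\mcC| = \sum_{i=1}^t |\mcC_i|$. The crucial observation is that an element $x = (x_1, \ldots, x_t)$ lies in a given $M \in \mcC_i$ if and only if its $i$-th coordinate $x_i$ lies in the corresponding maximal subring $M_i$ of $R_i$, because the other coordinates of $M$ are the full summands. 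Now suppose for contradiction that $|\mcC| < \min_{1 \le i \le t}\{\sigma(R_i)\}$. Then for each $i$ we have $|\mcC_i| \le |\mcC| < \sigma(R_i)$, so the maximal subrings of $R_i$ appearing in $\mcC_i$ are too few to cover $R_i$; hence there exists an element $y_i \in R_i$ lying in none of them. Assembling $y := (y_1, \ldots, y_t) \in R$, the observation shows that $y$ lies in no member of any $\mcC_i$, hence in no member of $\mcC$, contradicting that $\mcC$ covers $R$. Therefore $\sigma(R) \ge \min_{1 \le i \le t}\{\sigma(R_i)\}$, and combining the two bounds yields equality.

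The main point requiring care is the reduction to a cover by maximal subrings via Lemma \ref{lem:basics}(4) together with the structural hypothesis on maximal subrings, since it is precisely this hypothesis that forbids ``diagonal'' maximal subrings not respecting the decomposition (which would otherwise break the coordinate-projection argument). I would also note that the contradiction argument is robust to the degenerate cases: if some $R_i$ is not coverable then $\sigma(R_i) = \infty$, and if every $R_i$ is non-coverable the same reasoning shows $R$ admits no finite cover, so $\sigma(R) = \infty = \min_{1 \le i \le t}\{\sigma(R_i)\}$, keeping the statement valid under the convention that non-coverable rings have infinite covering number.
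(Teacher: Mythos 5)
Your proof is correct and is essentially the same argument as the paper's source: the paper does not reprove this lemma but cites \cite[Theorem 2.2]{Werner}, whose proof consists of exactly your two steps---the quotient bound $\sigma(R) \le \sigma(R_i)$ for each $i$, and the coordinate-wise pigeonhole on a minimal cover of maximal subrings, assembling an uncovered element $(y_1,\ldots,y_t)$ if no coordinate class covers its factor. The one implicit assumption, which you correctly flag and which the cited theorem also makes, is that minimal covers may be taken to consist of maximal subrings; this is automatic for finite rings, the only setting in which the paper ever applies the lemma.
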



From results of Neumann \cite[Lemma 4.1, 4.4]{Neumann} and Lewin \cite[Lemma 1]{Lewin}, it is known that if $R$ has a finite covering number, then there exists a two-sided ideal $I$ of $R$ of finite index such that $\sigma(R)=\sigma(R/I)$. Hence, the calculation of covering numbers reduces to the case of finite rings. In fact, a much stronger reduction is possible. Recall that $\msJ(R)$ denotes the Jacobson radical of a ring $R$.

\begin{lem}\label{lem:FullReduction}\cite[Theorem 3.12]{SwartzWerner}
Let $R$ be a (unital, associative) ring such that $\sigma(R)$ is finite. Then, there exists a two-sided ideal $I$ of $R$ such that $R/I$ is finite; $R/I$ has characteristic $p$; $\msJ(R/I)^2 = \{0\}$; and $\sigma(R/I) = \sigma(R)$.
\end{lem}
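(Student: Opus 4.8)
The plan is to first replace $R$ by a finite ring, then pass to a carefully chosen quotient and show that this quotient already has the required structure. By the results of Neumann \cite{Neumann} and Lewin \cite{Lewin} recalled in the introduction, a ring of finite covering number admits a finite homomorphic image with the same covering number; so, after replacing $R$ by such an image, I may assume $R$ is finite. Now consider the (finite, nonempty, since it contains $I=\{0\}$) collection of two-sided ideals $I$ with $\sigma(R/I)=\sigma(R)$, and choose one for which $|R/I|$ is minimal. Writing $\bar R = R/I$, I claim $\bar R$ is $\sigma$-elementary: for any nonzero ideal $\bar K$ of $\bar R$, Lemma \ref{lem:basics}(3) gives $\sigma(\bar R/\bar K)\ge\sigma(\bar R)=\sigma(R)$, while $|\bar R/\bar K|<|\bar R|$ together with minimality rules out equality, so $\sigma(\bar R/\bar K)>\sigma(\bar R)$. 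It therefore suffices to prove that a finite $\sigma$-elementary ring has prime characteristic and square-zero Jacobson radical, as these properties then pass back to $R/I$.

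For the characteristic, suppose toward a contradiction that $\operatorname{char}(\bar R)=m_1 m_2$ with $m_1,m_2>1$ and $\gcd(m_1,m_2)=1$. The primary decomposition of the additive group exhibits $\bar R=\bar R_1\oplus\bar R_2$ as a direct sum of two nonzero ideals of coprime characteristic. The key point is that \emph{every} subring $S$ of such a sum splits as $S=(S\cap\bar R_1)\oplus(S\cap\bar R_2)$: given $s=(s_1,s_2)\in S$, an integer $k$ with $km_2\equiv 1\pmod{m_1}$ yields $km_2\cdot s=(s_1,0)\in S$, and symmetrically $(0,s_2)\in S$. Hence every maximal subring of $\bar R$ respects the decomposition, so Lemma \ref{lem:AMM2.2} gives $\sigma(\bar R)=\min\{\sigma(\bar R_1),\sigma(\bar R_2)\}$. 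Since each $\bar R_j$ is a proper quotient of $\bar R$ and the minimum is attained by one of them, this contradicts $\sigma$-elementarity. Thus $\operatorname{char}(\bar R)$ is a prime power $p^a$.

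The two remaining reductions share a single mechanism. If $K$ is a nonzero two-sided ideal contained in every member of some minimal cover $\mcC$ of $\bar R$, then the images of the members of $\mcC$ cover $\bar R/K$ by proper subrings, so $\sigma(\bar R/K)\le|\mcC|=\sigma(\bar R)$; combined with Lemma \ref{lem:basics}(3) this forces $\sigma(\bar R/K)=\sigma(\bar R)$, contradicting $\sigma$-elementarity. Invoking Lemma \ref{lem:basics}(4), I may take $\mcC$ to consist of maximal subrings. I would apply this with $K=p\bar R$ to exclude $a\ge 2$ (forcing $\operatorname{char}(\bar R)=p$), and with $K=\msJ(\bar R)^2$ to force $\msJ(\bar R)^2=\{0\}$; in each case $K$ is a nonzero nilpotent ideal contained in $\msJ(\bar R)$, and the remaining task is to show $K\subseteq M$ for each $M\in\mcC$. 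For a maximal subring $M$, the sum $M+K$ is a subring containing $M$, so maximality forces either $K\subseteq M$ (the desired conclusion) or $M+K=\bar R$.

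The main obstacle is precisely to exclude $M+K=\bar R$ for members of a minimal cover, i.e.\ to control maximal subrings of a finite ring relative to ideals lying inside the radical. Because subrings need not contain $1_{\bar R}$, one cannot simply extract central idempotents, and it is \emph{not} true in general that $\msJ(\bar R)^2$ lies in every maximal subring; the hypothesis that $\bar R$ is $\sigma$-elementary must be used. My approach would be to analyze the structure of maximal subrings of finite rings, separating those that contain $\msJ(\bar R)$ (which are pulled back from maximal subrings of the semisimple quotient $\bar R/\msJ(\bar R)$) from those that do not, and to show that any member $M$ of a minimal cover with $M+K=\bar R$ can be replaced by a maximal subring containing $K$ without enlarging the cover, using the forcing results of Lemma \ref{lem:SigmaElementary}. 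Establishing this interaction between maximal subrings and the radical is the technical heart of the argument; once it is in place, the two reductions above show that $\bar R$, and hence the quotient $R/I$ we sought, has prime characteristic and square-zero Jacobson radical while preserving the covering number.
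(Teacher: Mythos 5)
Your proof has the right architecture, and its first half is correct: the Neumann--Lewin reduction to a finite ring, the passage to a quotient of minimal order with the same covering number (which is then $\sigma$-elementary), the coprime-characteristic splitting combined with Lemma \ref{lem:AMM2.2} to get prime-power characteristic, and the observation that a nonzero two-sided ideal contained in every member of a minimal cover contradicts $\sigma$-elementarity. (Note that the paper does not reprove this lemma at all; it imports it wholesale from \cite[Theorem 3.12]{SwartzWerner}, so your attempt must stand on its own.) But there is a genuine gap at exactly the point you label the ``technical heart'': you never rule out $M+K=\bar R$ for $K=p\bar R$ and $K=\msJ(\bar R)^2$, offering only a hoped-for replacement argument via Lemma \ref{lem:SigmaElementary} that is not carried out. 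Moreover, your diagnosis of what is needed is mistaken: both containments hold for \emph{every} maximal subring of \emph{every} finite ring, with no $\sigma$-elementarity hypothesis, so your claim that ``it is not true in general that $\msJ(\bar R)^2$ lies in every maximal subring'' is false. (The paper itself invokes the $p\bar R$ fact in this generality in the proof of Theorem \ref{thm:rngtoring}, citing \cite[Lemma 3.2, Proposition 3.3]{SwartzWerner}.)

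Here is what is missing; both are short Nakayama-style iterations, using that $K$ is an ideal so $M+K$ is a subring. If $\operatorname{char}(\bar R)=p^a$ and $M+p\bar R=\bar R$ for a subring $M$, then, since $pM\subseteq M$,
\[
\bar R=M+p\bar R=M+p\,(M+p\bar R)=M+p^2\bar R=\cdots=M+p^a\bar R=M,
\]
so no proper subring satisfies $M+p\bar R=\bar R$. Similarly, let $J=\msJ(\bar R)$, so $J^n=\{0\}$ for some $n$, and suppose $M+J^2=\bar R$ for a subring $M$. Each $j\in J$ can be written as $j=m+x$ with $m\in M$ and $x\in J^2$, and then $m=j-x\in M\cap J$; hence $J=B+J^2$, where $B:=M\cap J$ satisfies $B^2\subseteq M\cap J^2\subseteq B$. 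If $J\subseteq B+J^k$ for some $k\ge 2$, then
\[
J^2\subseteq (B+J^k)(B+J^k)\subseteq B^2+BJ^k+J^kB+J^{2k}\subseteq B+J^{k+1},
\]
so $J=B+J^2\subseteq B+J^{k+1}$; inductively $J\subseteq B+J^k$ for all $k$, whence $J\subseteq B\subseteq M$ by nilpotency of $J$, and then $\bar R=M+J^2\subseteq M$, a contradiction. With these two facts, every maximal subring---hence every member of a minimal cover, after invoking Lemma \ref{lem:basics}(4)---contains $p\bar R$ and $\msJ(\bar R)^2$, and your mechanism then finishes the proof.
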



Thus, for unital rings with finite covers, the computation of $\sigma(R)$ reduces to the case where $R$ is finite of characteristic $p$ and $\msJ(R)^2=\{0\}$. When $R$ is finite, $R/\msJ(R)$ is semisimple, i.e., it is isomorphic to a direct sum of matrix rings over finite fields. The Wedderburn-Malcev Theorem \cite[Sec.\ 11.6, Cor.\ p.\ 211]{Pierce}, \cite[Thm.\ VIII.28]{McDonald} (sometimes called the Wedderburn Principal Theorem) provides a more detailed description of the structure of $R$, and how it relates to $R/\msJ(R)$.

\begin{thm}\label{thm:Wedderburn} (Wedderburn-Malcev Theorem)
Let $R$ be a finite ring with unity of characteristic $p$. Then, there exists an $\F_p$-subalgebra $S$ of $R$ such that $R = S \oplus \msJ(R)$, and $S \cong R/\msJ(R)$ as $\F_p$-algebras. Moreover, $S$ is unique up to conjugation by elements of $1+\msJ(R)$.
\end{thm}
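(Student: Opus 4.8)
The plan is to recast the statement as a splitting problem for the algebra extension
\[
0 \to \msJ(R) \to R \to R/\msJ(R) \to 0,
\]
with $\pi \colon R \to R/\msJ(R)$ the quotient map, and to exploit the fact that, because $\F_p$ is perfect, the semisimple quotient $\olR := R/\msJ(R)$ is a \emph{separable} $\F_p$-algebra (a finite product of matrix rings over finite extensions of $\F_p$). Writing $J := \msJ(R)$, which is nilpotent since $R$ is finite, I would induct on the nilpotency index $n$ with $J^n = \{0\}$. The case $n = 1$ is immediate (take $S = R$). For the inductive step I would pass to $R/J^{n-1}$: its radical is $J/J^{n-1}$, of nilpotency index at most $n-1$, so induction furnishes a complementary subalgebra whose preimage $T$ in $R$ is a subalgebra satisfying $\msJ(T) = J^{n-1}$ and $\msJ(T)^2 = \{0\}$. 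A short diagram chase then shows that a complement to $J^{n-1}$ inside $T$ is also a complement to $J$ inside $R$. This reduces both existence and uniqueness to the \textbf{square-zero case}, i.e.\ $J^2 = \{0\}$, in which $J$ is naturally an $\olR$-bimodule.

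For existence in the square-zero case, choose any $\F_p$-linear section $\sigma \colon \olR \to R$ of $\pi$ with $\sigma(1) = 1$, and measure its failure to be multiplicative by
\[
f(x,y) := \sigma(x)\sigma(y) - \sigma(xy) \in J.
\]
Associativity shows $f$ is a Hochschild $2$-cocycle for $\olR$ with coefficients in the bimodule $J$. Separability of $\olR$ gives $H^2(\olR, J) = 0$, so $f = \delta g$ for some $\F_p$-linear $g \colon \olR \to J$; then $\sigma - g$ is an algebra homomorphism and $S := (\sigma - g)(\olR)$ is the desired complement, mapped isomorphically onto $\olR$ by $\pi$.

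For uniqueness in the square-zero case, two complements $S, S'$ yield two algebra sections $\sigma, \sigma'$, and their difference $d := \sigma' - \sigma \colon \olR \to J$ is a Hochschild $1$-cocycle, i.e.\ a derivation. Separability now gives $H^1(\olR, J) = 0$, so $d$ is inner: $d(x) = j x - x j$ for some $j \in J$ (bimodule action). Since $j^2 = 0$, the element $u = 1 + j$ lies in $1 + J$ with inverse $1 - j$, and a direct computation (discarding the $J^2 = \{0\}$ terms) gives $u\,\sigma(x)\,u^{-1} = \sigma(x) + j\sigma(x) - \sigma(x)j = \sigma(x) + d(x) = \sigma'(x)$, whence $u S u^{-1} = S'$. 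Running this back through the induction — conjugating so that $S$ and $S'$ agree modulo $J^{n-1}$ and then applying the square-zero case to the square-zero ideal $J^{n-1}$ — yields conjugacy by an element of $1 + J$ in general.

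The technical heart, and the step I expect to be the main obstacle to write carefully, is the vanishing $H^1(\olR, J) = H^2(\olR, J) = 0$. This is precisely the separability of $\olR$ over $\F_p$: one produces a separability idempotent $e \in \olR \otimes_{\F_p} \olR^{\mathrm{op}}$ (available because finite fields are perfect, so each matrix factor $M_k(\F_q)$ is separable) and uses it to build an explicit contracting homotopy killing all positive-degree Hochschild cohomology. Everything else is bookkeeping: verifying the cocycle identities and confirming that the reductions modulo powers of $J$ interact correctly with the bimodule structures.
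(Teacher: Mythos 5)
The paper does not prove this statement at all: it quotes the Wedderburn--Malcev theorem with citations to Pierce and to McDonald, and the proof in those sources is exactly the cohomological argument you outline (induction on the nilpotency index to reduce to $\msJ(R)^2 = \{0\}$, then existence from $H^2(R/\msJ(R), \msJ(R)) = 0$ and conjugacy from $H^1(R/\msJ(R), \msJ(R)) = 0$, both of which follow from separability of the semisimple quotient over the perfect field $\F_p$). Your sketch is correct and is essentially the standard proof from the cited references, so it agrees with the paper's (purely referential) treatment.
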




Let $\msS(R)$ be the set of all semisimple complements to $\msJ(R)$ in $R$. 
By Theorem \ref{thm:Wedderburn}, all such complements are conjugate. So,  for any $S \in \msS(R)$, we have $R = S \oplus \msJ(R)$, and $S \cong R/\msJ(R)$. 

The decomposition given by Theorem \ref{thm:Wedderburn} allows us to characterize all of the maximal subrings of $R$.

\begin{lem}\label{lem:MaxSubringClassification}\cite[Theorem 3.10]{SwartzWerner}
Let $R$ be a finite ring with unity of characteristic $p$, let $M$ be a maximal subring of $R$, and let $J = \msJ(R)$.
\begin{enumerate}[(1)]
\item $J \subseteq M$ if and only if $M$ is the inverse image of a maximal subring of $R/J$.
\item $J \not\subseteq M$ if and only if $M = S \oplus \msJ(M)$, where $S \in \msS(R)$ and $\msJ(M) = M \cap J$ is an ideal of $R$ that is maximal among the subideals of $R$ contained in $J$.
\end{enumerate}
\end{lem}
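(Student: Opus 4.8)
The plan is to prove the two parts separately along the dichotomy $J \subseteq M$ versus $J \not\subseteq M$, with part (1) being a routine application of the correspondence theorem and part (2) carrying the real content. Let $\pi\colon R \to R/J$ denote the quotient map.

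For part (1), the lattice isomorphism between subrings of $R$ containing $J = \ker\pi$ and subrings of $R/J$ does all the work. If $M = \pi^{-1}(\olM)$ for a maximal subring $\olM$ of $R/J$, then $J \subseteq M$, and any subring strictly between $M$ and $R$ contains $J$ and so pushes forward to a subring strictly between $\olM$ and $R/J$; maximality of $\olM$ thus forces maximality of $M$. Conversely, if $J \subseteq M$ and $M$ is maximal, then $\pi(M)$ is a proper subring of $R/J$ with $\pi^{-1}(\pi(M)) = M$, and the same correspondence makes $\pi(M)$ maximal. I expect this part to be essentially immediate.

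For part (2), I would first extract the structural consequences of $M$ being maximal with $J \not\subseteq M$. Maximality gives $M + J = R$, so $\pi$ restricts to a surjection $M \to R/J$ with kernel $M \cap J$; hence $M/(M \cap J) \cong R/J$ is semisimple. Since $J$ is nilpotent, $M \cap J$ is a nilpotent ideal of $M$, so $M \cap J \subseteq \msJ(M)$, while semisimplicity of the quotient gives the reverse inclusion; thus $\msJ(M) = M \cap J$. Applying the Wedderburn--Malcev theorem to $M$ yields $M = S \oplus \msJ(M)$ with $S \cong M/\msJ(M) \cong R/J$, and since $S \cap J \subseteq S \cap \msJ(M) = 0$, a dimension count (using $\dim_{\F_p}(M\cap J) = \dim M + \dim J - \dim R$) gives $S \oplus J = R$, i.e.\ $S \in \msS(R)$.

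The crux is proving that $K := M \cap J$ is a two-sided ideal of $R$ (a priori it is only closed under multiplication by $S$ and by itself) and is maximal among the proper ideals of $R$ contained in $J$. Here I would observe that $K + J^2$ is again an $S$-closed subring lying between $K$ and $J$, so $S \oplus (K + J^2)$ is a subring containing $M$, and by maximality it equals $M$ or $R$. The main obstacle is ruling out $S \oplus (K + J^2) = R$, i.e.\ $K + J^2 = J$ with $K \subsetneq J$; I would dispose of this with a nilpotent Nakayama argument. If $K$ is any subring of the nilpotent ring $J$ with $K + J^2 = J$, then squaring gives $J^2 = K^2 + KJ^2 + J^2K + J^4 \subseteq K + J^3$, and an induction yields $J^2 \subseteq K + J^n$ for every $n$; taking $J^n = 0$ forces $J^2 \subseteq K$ and hence $K = J$, a contradiction. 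Therefore $K + J^2 = K$, so $J^2 \subseteq K$, whence $JK, KJ \subseteq J^2 \subseteq K$ and $K$ is an $R$-ideal; its maximality among proper $R$-subideals of $J$ follows since any ideal strictly between $K$ and $J$ would yield a subring strictly between $M$ and $R$. For the converse I would reverse these steps: given $M = S \oplus K$ with $S \in \msS(R)$ and $K$ a maximal proper ideal of $R$ inside $J$, the same Nakayama argument (now trivial, as $K$ is an ideal) gives $J^2 \subseteq K$, so $M$ is a genuine subring with $J \not\subseteq M$; any $N$ with $M \subsetneq N \subseteq R$ contains $S$ and hence equals $S \oplus (N \cap J)$, where $N \cap J$ contains $J^2$ and is therefore an $R$-ideal containing $K$, so maximality of $K$ forces $N \cap J = J$ and $N = R$.
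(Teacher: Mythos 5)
The first thing to note is that the paper itself contains no proof of this lemma: it is quoted verbatim from \cite[Theorem 3.10]{SwartzWerner}, so there is no internal argument to compare yours against, and your proposal must stand on its own. Its architecture largely does. Part (1) is indeed the routine lattice correspondence. In part (2), the chain $M+J=R$, hence $M/(M\cap J)\cong R/J$, hence $\msJ(M)=M\cap J$; the dimension count showing $S\cap J=\{0\}$ and $S\oplus J=R$; the comparison of $M=S\oplus K$ with the subring $S\oplus(K+J^2)$; the nilpotent-Nakayama induction ruling out $K+J^2=J$ for a proper subring $K$ of $J$ (the induction $J^2\subseteq K+J^n$ does close up, using $J=K+J^2$, $K^2\subseteq K$, and $KJ^n, J^nK, J^{2n}\subseteq J^{n+1}$); and the deduction that $J^2\subseteq K$ makes $K$ an $R$-ideal whose maximality follows from that of $M$ --- all of this is correct. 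Your converse actually proves more than the stated biconditional requires (for a ring $M$ already assumed maximal, ``form $\Rightarrow J\not\subseteq M$'' is immediate), but the stronger statement that every $S\oplus K$ of this shape is a maximal subring is exactly how the lemma gets used later in the paper, so this is a feature, not a flaw.

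The one genuine gap is the sentence ``Applying the Wedderburn--Malcev theorem to $M$.'' In this paper subrings need not contain $1_R$, and nothing you have established at that point shows that $M$ has a multiplicative identity at all; but Theorem~\ref{thm:Wedderburn}, like the versions in Pierce and McDonald it cites, is a statement about rings \emph{with unity}. This is not a vacuous worry: a finite ring with nilpotent radical and unital semisimple quotient can fail to have an identity (e.g.\ $\F_p\oplus V$ with $(a,v)(b,w)=(ab,aw)$, whose radical is $V$ and whose quotient is $\F_p$). Fortunately the repair is short. Since $\msJ(M)=M\cap J$ is nilpotent, lift the identity of $M/\msJ(M)$ to an idempotent $e\in M$ (idempotents lift modulo nil ideals, with no unitality needed). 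Under the isomorphism $M/(M\cap J)\cong R/J$ induced by inclusion, $e$ maps to the identity of $R/J$, so $1_R-e\in J$; but $(1_R-e)^2=1_R-e$, and the only idempotent in the nilpotent ideal $J$ is $0$, so $e=1_R\in M$. Thus $M$ is a finite unital ring of characteristic $p$ and Theorem~\ref{thm:Wedderburn} applies exactly as you intended. (Alternatively one can invoke a Wedderburn principal theorem for not-necessarily-unital finite-dimensional algebras, proved by passing to the unitalization, but that is not the version the paper states.) With that one insertion, your proof is complete.
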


\section{Peirce decomposition and bounds}\label{sect:Peirce}

We now turn our attention to proving that any $\sigma$-elementary unital ring falls into one of the four classes given in Theorem \ref{thm:main}. 
Throughout this section, we will assume that $R$ is a $\sigma$-elementary unital ring with $\sigma(R)$ finite. By Lemma \ref{lem:FullReduction}, we can assume that $R$ is finite of characteristic $p$ and has Jacobson radical $J = \msJ(R)$ with $J^2 = \{0\}$. Moreover, using Theorem \ref{thm:Wedderburn}, we have $R = S \oplus J$, where $S \cong R/J$ is a semisimple complement to $J$ in $R$. The set of all such complements is denoted by $\msS(R)$. For the remainder of this section, we fix $S \in \msS(R)$. Then, for some $N \ge 1$, $S = \bigoplus_{i = 1}^N S_i$, where each $S_i$ is a simple ring. For each $1 \le i \le N$, let $S_i = M_{n_i}(q_i)$, where $n_i \ge 1$ and $q_i$ is a power of $p$. Finally, let $e_i = 1_{S_i}$, so that $e_1, \ldots, e_N$ are orthogonal idempotents such that $\sum_{i = 1}^N e_i$ is the unity of $R$. 

Apply a two-sided Peirce decomposition to $J$ using the idempotents $e_1, \ldots, e_N$. This gives $J = \bigoplus_{1 \le i, j \le N} e_i J e_j$, where each $e_iJe_j$ is an ($S_i$, $S_j$)-bimodule. If $S_i=M_{n_i}(F_i)$ and $S_j=M_{n_j}(F_j)$, then ($S_i$, $S_j$)-bimodules are equivalent to modules over $M_{n_i n_j}(F)$, where $F = F_i \otimes_{\F_p} F_j$ is the compositum of $F_i$ and $F_j$. In particular, we may speak of the length of $e_i J e_j$ as an ($S_i$, $S_j$)-bimodule, i.e., the number of simple ($S_i$, $S_j$)-bimodules in a direct sum decomposition of $e_i J e_j$. Note that any simple ($S_i$, $S_j$)-bimodule has order $|F|^{n_in_j}$. 

\begin{Def}\label{def:lambda}
For all $1 \le i, j \le N$, let $J_{ij} := e_i J e_j$. If $J_{ij} \ne \{0\}$, then let $\lambda_{ij}$ be the length of $J_{ij}$ as an ($S_i$, $S_j$)-bimodule. If $J_{ij}=\{0\}$, then we take $\lambda_{ij}=0$.
\end{Def}

Recall that when $q_i$ and $q_j$ are powers of $p$, we define $q_i \otimes q_j$ to be the order of $\F_{q_i} \otimes_{\F_p} \F_{q_j}$. If $q_i = p^{d_i}$ and $q_j = p^{d_j}$, then $q_i \otimes q_j = p^{\lcm(d_i, d_j)}$. Note also that we have $M_{n_i}(q_i) \otimes_{\F_p} M_{n_j}(q_j) \cong M_{n_i n_j}(q_i \otimes q_j)$ for any positive integers $n_i$ and $n_j$. Thus, $|J_{ij}| = (q_i \otimes q_j)^{n_i n_j \lambda_{ij}}$.

We remark that the rings in parts (2) and (4) of Theorem \ref{thm:main} may occur as subrings of $R$, and can be described using the notation and parameters we have defined. If $n_k=1$ and $\lambda_{kk}=2$ for some $1 \le k \le N$, then $S_k \oplus J_{kk} \cong \F_{q_k} (+) \F_{q_k}^2$. If $1 \le i, j \le N$ with $i \ne j$, $n_j=1$, and $\lambda_{ij}=1$, then $S_i \oplus S_j \oplus J_{ij} \cong A(n_i,q_i,q_j)$ is a ring of AGL-type.


As we now show, any ($S_i$, $S_j$)-bimodule inside of $J$ is a two-sided ideal of $R$, and has an ideal complement in $J$.

\begin{lem}\label{lem:Peirce}
Let $1 \le i, j \le N$.
\begin{enumerate}[(1)]
\item Let $X \subseteq J$ be an ($S_i$, $S_j$)-bimodule. Then, $X \subseteq J_{ij}$, $X$ is a two-sided ideal of $R$, and $X$ has a two-sided ideal complement in $J$.
\item Let $I \subseteq J$ be a two-sided ideal of $R$. Then, $I$ has a two-sided ideal complement in $J$.
\item Let $I \subseteq J$ be a two-sided ideal of $R$ and let $Z = J \cap Z(R)$. For all $x \in J$, $S \oplus I = S^{1+x} \oplus I$ if and only if $x \in I + Z$. Hence, the number of conjugates of $S \oplus I$ in $R$ is equal to $|J:(I+Z)|$.
\item For each $1 \le i \le N$, $R$ has a residue ring isomorphic to $S_i \oplus J_{ii}$. For all $1 \le i, j \le N$ with $i \ne j$, $R$ has a residue ring isomorphic to $S_i \oplus S_j \oplus J_{ij}$.
\end{enumerate}
\end{lem}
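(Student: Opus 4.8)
The plan is to prove the four parts in order, leaning throughout on the two structural facts already in place: $J^2=\{0\}$, and (from the paragraph preceding the lemma) each Peirce block $J_{ij}$ is a module over the simple Artinian ring $M_{n_in_j}(F)$ with $F=F_i\otimes_{\F_p}F_j$, so every sub-$(S_i,S_j)$-bimodule of $J_{ij}$ is a direct summand.

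For part (1), let $X\subseteq J$ be an $(S_i,S_j)$-bimodule. Since the bimodule structure is unital, $e_i$ and $e_j$ act as identities, so each $x\in X$ satisfies $x=e_ixe_j\in e_iJe_j=J_{ij}$, giving $X\subseteq J_{ij}$. To see $X$ is a two-sided ideal, write $r=s+w$ with $s=\sum_k s_k\in S$ ($s_k\in S_k$) and $w\in J$; then for $x\in X$ I have $wx\in J^2=\{0\}$ and $s_kx=s_ke_ix=0$ for $k\ne i$ (as $s_ke_i=s_ke_ke_i=0$), so $rx=s_ix\in S_iX\subseteq X$, and $xr\in X$ symmetrically. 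For the complement, split $J_{ij}=X\oplus Y$ as $(S_i,S_j)$-bimodules by semisimplicity; then $Y$ is an ideal by the first half of (1), each remaining $J_{kl}$ is an ideal for the same reason, and $Y\oplus\bigoplus_{(k,l)\ne(i,j)}J_{kl}$ is a two-sided ideal complement to $X$ in $J$. Part (2) then follows one block at a time: since $I\subseteq J$ is an ideal, $I=\bigoplus_{i,j}e_iIe_j$ with each $I_{ij}:=e_iIe_j$ a sub-bimodule of $J_{ij}$; choosing a bimodule complement $Y_{ij}$ of $I_{ij}$ in each $J_{ij}$ and setting $Y=\bigoplus_{i,j}Y_{ij}$ gives an ideal (each $Y_{ij}$ is one by (1)) with $J=I\oplus Y$.

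For part (3), I first compute $s^{1+x}=(1+x)^{-1}s(1+x)=(1-x)s(1+x)=s+sx-xs$ using $J^2=\{0\}$, so $S^{1+x}=\{s+(sx-xs):s\in S\}$. Because $sx-xs\in J$ and $(S+I)\cap J=I$, the containment $S^{1+x}\subseteq S\oplus I$ is equivalent to $sx-xs\in I$ for all $s\in S$; and since $|S^{1+x}\oplus I|=|S||I|=|S\oplus I|$ with $S^{1+x}\cap I\subseteq S^{1+x}\cap J=\{0\}$, this containment upgrades to the equality $S\oplus I=S^{1+x}\oplus I$. It then remains to identify $W:=\{x\in J:sx-xs\in I\ \forall s\in S\}$ with $I+Z$. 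The inclusion $I+Z\subseteq W$ is immediate ($I$ is an ideal, $Z$ is central). For the reverse inclusion, which I expect to be the crux, I invoke part (2) to write $J=I\oplus Y$ with $Y$ an ideal: given $x\in W$, let $x'$ be its $Y$-component, so $x-x'\in I$; for each $s\in S$ one has $sx'-x's\in Y$ (ideal) and $sx'-x's\equiv sx-xs\equiv 0\pmod I$, forcing $sx'-x's\in I\cap Y=\{0\}$, and since $x'$ commutes with $J$ automatically ($J^2=\{0\}$), $x'\in Z$, whence $x\in I+Z$. Finally, as $I$ is conjugation-invariant and conjugation is additive, $(S\oplus I)^{1+x}=S^{1+x}\oplus I$; using $(1+x)(1+y)^{-1}=1+(x-y)$ together with the displayed equivalence, these conjugates are in bijection with the cosets of $I+Z$ in $J$, giving the count $|J:(I+Z)|$.

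For part (4), I construct the quotients explicitly. To realize $S_i\oplus J_{ii}$, put $K:=\big(\bigoplus_{k\ne i}S_k\big)\oplus\big(\bigoplus_{(k,l)\ne(i,i)}J_{kl}\big)$ and check $K$ is a two-sided ideal: $\bigoplus_{k\ne i}S_k$ is an ideal of $S$, left/right multiplication by $S$ preserves each $J_{kl}$ on the correct side, the mixed products $wt$ land in blocks $J_{kl}$ with $l\ne i$, and $J^2=\{0\}$ kills all $J\cdot J$ terms. Since $S_i\oplus J_{ii}=e_iRe_i$ is a subring complementing $K$ additively, $R/K\cong S_i\oplus J_{ii}$. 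The case $i\ne j$ is identical with $K':=\big(\bigoplus_{k\notin\{i,j\}}S_k\big)\oplus\big(\bigoplus_{(k,l)\ne(i,j)}J_{kl}\big)$ and complementary subring $S_i\oplus S_j\oplus J_{ij}$. The only genuinely substantive step in the whole lemma is the reverse inclusion of part (3) (i.e.\ that a class in $J/I$ central in $R/I$ lifts to a central element of $J$); everything else is bookkeeping once semisimplicity of the bimodule blocks and $J^2=\{0\}$ are in hand.
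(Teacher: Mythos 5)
Your proof is correct and takes essentially the same approach as the paper's: identifying $X = e_i X e_j \subseteq J_{ij}$, using semisimplicity of the Peirce blocks to produce bimodule (hence ideal) complements in parts (1)--(2), decomposing $x = \alpha + \beta$ along $J = I \oplus Y$ in part (3) so that the complement-component is shown to be central, and quotienting by the ideal formed from the remaining simple summands and Peirce blocks in part (4). The places where you are more explicit than the paper (the cardinality upgrade from containment to equality, the coset bijection for the conjugate count, the direct ideal verification in (4)) are just filled-in details of the same argument.
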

\begin{proof}
(1) Since $e_i$ and $e_j$ are the identities of $S_i$ and $S_j$, respectively, $X = e_i X e_j \subseteq J_{ij}$. To show that $X$ is a two-sided ideal of $R$, let $r \in R$ and $x \in X$. Then, $r=s+y$ for some $s \in S$ and $y \in J$. Since $J^2= \{0\}$, $rx = sx$. Next, write $s = \sum_{k=1}^N s_k$, where each $s_k \in S_k$. Since the idempotents $e_1, \ldots, e_N$ are orthogonal, we have $se_i = s_i e_i$. So, $rx = sx = s(e_i x) = s_i x \in X$. The proof that $xr \in X$ is similar. Finally, since $S_i$ and $S_j$ are simple rings, $J$ is a semisimple ($S_i$, $S_j$)-bimodule. So, there is an ($S_i$, $S_j$)-bimodule $\whX \subseteq J$ such that $J = X \oplus \whX$, and by the work above, $\whX$ is also a two-sided ideal of $R$.

(2) Apply the Peirce decomposition to $I$ to get $I = \bigoplus_{i,j} e_i I e_j$. Then, each $e_i I e_j$ is an ($S_i$, $S_j$)-bimodule contained in $J$. By part (1), each $e_i I e_j$ has an ideal complement, and therefore $I$ does as well. 

(3) Note that since $J^2=\{0\}$, $(1+x)^{-1}=1-x$ for all $x \in J$. So, $s^{1+x} = (1-x)s(1+x) = s + sx-xs$ for all $s \in S$ and all $x \in J$. Certainly, if $x \in I +Z$, then $S^{1+x} \subseteq S \oplus I$. Conversely, assume that $x \in J$ is such that $S \oplus I = S^{1+x} \oplus I$. By (2), there is an ideal complement $\whI$ of $I$ in $J$. Express $x$ as $x = \alpha + \beta$, where $\alpha \in I$ and $\beta \in \whI$. Then, for all $s \in S$,
\begin{equation*}
sx - xs = (s\alpha - \alpha s) + (s\beta - \beta s),
\end{equation*}
and $sx-xs \in I$ because $s^{1+x} \in S \oplus I$. It follows that $s\beta - \beta s \in I \cap \whI = \{0\}$. Thus, $\beta$ is central in $R$ and $x \in I + Z$. 

(4) We will prove the claim regarding $S_i \oplus S_j \oplus J_{ij}$. The proof of the other statement is similar. Let $T = S_i \oplus S_j \oplus J_{ij}$, and let $\whS_{ij} = \bigoplus_{k \ne i, k \ne j} S_k$, which is a direct sum complement to $S_i \oplus S_j$ in $S$. Let $\whJ_{ij}$ be an ideal complement to $J_{ij}$ in $J$, which exists by part (2). Since the idempotents $e_1, \ldots, e_N$ are orthogonal, $\whS_{ij}$ and $T$ annihilate one another. Hence, $\whS_{ij} \oplus \whJ_{ij}$ is a two-sided ideal of $R$, and $R/(\whS_{ij} \oplus \whJ_{ij}) \cong T$.
\end{proof}

\begin{prop}\label{prop:idealcomplements}
Assume that $R$ is $\sigma$-elementary. Let $M$ be a maximal subring of $R$ that does not contain $J$. Then, $M$ is contained in every minimal cover of $R$.
\end{prop}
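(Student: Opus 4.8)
The plan is to realize $M$ as a complemented maximal subring and then invoke Lemma~\ref{lem:SigmaElementary}(2), which forces any maximal subring that splits off a two-sided ideal to lie in every minimal cover, provided the covering number strictly drops after passing to the corresponding quotient. Since $M$ is a maximal subring with $J \not\subseteq M$, Lemma~\ref{lem:MaxSubringClassification}(2) applies and gives a decomposition $M = S' \oplus \msJ(M)$, where $S' \in \msS(R)$ is a semisimple complement to $J$ and $\msJ(M) = M \cap J$ is a two-sided ideal of $R$ that is maximal among the subideals of $R$ contained in $J$.

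First I would produce an ideal complement to $\msJ(M)$ inside $J$. Because $\msJ(M)$ is a two-sided ideal of $R$ contained in $J$, Lemma~\ref{lem:Peirce}(2) yields a two-sided ideal $I$ of $R$ with $J = \msJ(M) \oplus I$. Substituting into $R = S' \oplus J$ gives $R = S' \oplus \msJ(M) \oplus I = M \oplus I$: the sum $M + I$ is direct because $M \cap I \subseteq (M \cap J) \cap I = \msJ(M) \cap I = \{0\}$, and it exhausts $R$ because $M + I \supseteq S' + (\msJ(M) + I) = S' + J = R$. Thus $R = M \oplus I$ with $I$ a two-sided ideal, and consequently $R/I \cong M$ as rings.

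Next I would check that $I$ is nonzero: if $I = \{0\}$, then $J = \msJ(M) \subseteq M$, contradicting the hypothesis $J \not\subseteq M$. Since $R$ is $\sigma$-elementary and $I$ is a nonzero two-sided ideal, the defining inequality gives $\sigma(R) < \sigma(R/I)$. Now the hypotheses of Lemma~\ref{lem:SigmaElementary}(2) are all in place: $M$ is a proper (indeed maximal) subring with $R = M \oplus I$ and $\sigma(R) < \sigma(R/I)$. That lemma then forces $M \in \mcC$ for every minimal cover $\mcC$ of $R$, which is exactly the claim.

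The argument is short because the heavy lifting is already packaged into the cited lemmas; the only genuine verifications are the splitting $R = M \oplus I$ and the nonvanishing of $I$. I expect the main (though minor) obstacle to be cleanly justifying that the ideal complement supplied by Lemma~\ref{lem:Peirce}(2) meshes correctly with the Wedderburn--Malcev complement $S'$, so that $M \oplus I$ really is a decomposition of $R$ into a subring plus a two-sided ideal rather than merely an additive direct sum; carefully tracking the identity $M \cap J = \msJ(M)$ is what makes this step go through.
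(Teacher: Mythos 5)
Your proposal is correct and follows exactly the paper's own argument: decompose $M = S' \oplus \msJ(M)$ via Lemma~\ref{lem:MaxSubringClassification}(2), produce an ideal complement to $\msJ(M)$ in $J$ via Lemma~\ref{lem:Peirce}(2), conclude $R = M \oplus I$, and invoke Lemma~\ref{lem:SigmaElementary}(2). The only difference is that you spell out the verification that the sum is direct, that it exhausts $R$, and that $I \neq \{0\}$ — details the paper leaves implicit.
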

\begin{proof}
By Lemma \ref{lem:MaxSubringClassification}, $M = S' \oplus I$, where $S' \in \msS(R)$ and $I$ is a maximal subideal of $J$. By Lemma \ref{lem:Peirce}, $I$ has an ideal complement $\whI$ in $J$. Thus, $R = M \oplus \whI$. Since $R$ is $\sigma$-elementary, $M$ must be contained in every minimal cover of $R$ by Lemma \ref{lem:SigmaElementary}.
\end{proof}

Proposition \ref{prop:idealcomplements} (often in conjunction with Lemma \ref{lem:SigmaElementary}) will be used to produce lower bounds on the covering number of a $\sigma$-elementary ring. The remainder of this section contains various bounds on the covering numbers of $R$ and its subrings. 

\begin{lem}\label{lem:JcapZ}
Let $1 \le i, j \le N$ with $i \ne j$, and let $T = S_i \oplus S_j \oplus J_{ij}$.
\begin{enumerate}[(1)]
\item $J_{ij} \cap Z(T) = \{0\}$.

\item Let $I$ be a subideal of $J_{ij}$. Then, for all $x \in J_{ij}$, $(S_i \oplus S_j) \oplus I = (S_i \oplus S_j)^{1+x} \oplus I$ if and only if $x \in I$.

\item $\sigma(R) \le \sigma(T)$.
\end{enumerate}
\end{lem}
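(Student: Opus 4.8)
The plan is to treat the three parts in sequence, exploiting that $T = S_i \oplus S_j \oplus J_{ij}$ is itself a ring with unity $e_i + e_j$, Jacobson radical $J_{ij}$, semisimple complement $S_i \oplus S_j$, and $J_{ij}^2 = \{0\}$ (inherited from $J^2 = \{0\}$). In effect, all three statements are the analogues for $T$ of facts already established for $R$, so the work is largely bookkeeping with the Peirce components.

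For part (1), I would repeat the argument of Lemma \ref{lem:Jcenter}. Since $x \in J_{ij} = e_i J e_j$, we have $e_i x = x$, while orthogonality of the idempotents gives $x e_i = e_i x e_j e_i = 0$ because $e_j e_i = 0$. Hence if $x \in Z(T)$, then $x$ commutes with $e_i$ and $x = e_i x = x e_i = 0$. This yields $J_{ij} \cap Z(T) = \{0\}$.

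For part (2), I would first record that, because $J_{ij}^2 = \{0\}$, we have $s^{1+x} = s + sx - xs$ for every $s \in S_i \oplus S_j$ and $x \in J_{ij}$. Writing $s = s_i + s_j$ with $s_i \in S_i$, $s_j \in S_j$ and using orthogonality of the idempotents (which forces $s_j x = 0 = x s_i$), the commutator collapses to $sx - xs = s_i x - x s_j \in J_{ij}$. Since $T = (S_i \oplus S_j) \oplus J_{ij}$, the identity $(S_i \oplus S_j) \oplus I = (S_i \oplus S_j)^{1+x} \oplus I$ is then equivalent to $sx - xs \in I$ for all such $s$. The reverse implication is immediate, as $I$ is a two-sided ideal of $T$, hence an $(S_i, S_j)$-sub-bimodule, so $s_i x, x s_j \in I$ whenever $x \in I$; for the forward implication I would take $s = e_i$, which gives $sx - xs = x$ and forces $x \in I$. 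Equivalently, part (2) is precisely Lemma \ref{lem:Peirce}(3) applied to $T$ in place of $R$, where part (1) guarantees that the central contribution $Z(T) \cap J_{ij}$ vanishes.

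For part (3), I would invoke Lemma \ref{lem:Peirce}(4): because $i \ne j$, the ring $R$ has a residue ring isomorphic to $T$. Since $\sigma(R) \le \sigma(R/K)$ for every two-sided ideal $K$ of $R$ by Lemma \ref{lem:basics}(3), we conclude $\sigma(R) \le \sigma(T)$. I do not anticipate a genuine obstacle; the only step demanding care is the commutator computation in part (2), where one must confirm that the mixed terms vanish by orthogonality so that the condition reduces exactly to $x \in I$.
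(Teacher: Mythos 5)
Your proposal is correct and follows essentially the same route as the paper: part (1) by the same idempotent computation ($e_i x = x$, $x e_i = 0$), part (2) via Lemma \ref{lem:Peirce}(3) applied to $T$ together with part (1) (your direct commutator verification with $s = e_i$ is just an unwinding of that), and part (3) from Lemma \ref{lem:Peirce}(4) and Lemma \ref{lem:basics}(3).
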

\begin{proof}
For (1), there is nothing to prove if $J_{ij} = \{0\}$, so assume there exists $x \in J_{ij} \setminus \{0\}$. Then, $e_i x = x$ and $x e_i = (x e_j)e_i = 0$. Hence, $x \in Z(T)$ if and only if $x = 0$. Part (2) now follows from part (1) and Lemma \ref{lem:Peirce}(3), and part (3) follows from Lemma \ref{lem:Peirce}(4).
\end{proof}

The next two lemmas are generalizations of \cite[Proposition 3.3]{SwartzWernerI}.

\begin{lem}\label{lem:aglupperbound}
Let $1 \le i, j \le N$ and let $q = q_i \otimes q_j$. Assume that $J_{ij} \ne \{0\}$, and let $T = S_i \oplus S_j \oplus J_{ij}$. Then, $\sigma(T) \le (q^{n_i n_j +1}-1)/(q-1)$.
\end{lem}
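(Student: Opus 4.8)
The plan is to reduce to the case where $J_{ij}$ is a \emph{simple} $(S_i,S_j)$-bimodule and then transcribe the cover constructed in Proposition \ref{prop:AGLcover}, the only new feature being that both $S_i$ and $S_j$ may be genuine matrix rings rather than one of them being a field. Since $J_{ij}\ne\{0\}$ and $S_i,S_j$ are simple, $J_{ij}$ is a semisimple $(S_i,S_j)$-bimodule; write $J_{ij}=K\oplus K'$ with $K$ simple. Arguing exactly as in Lemma \ref{lem:Peirce}(1), the sub-bimodule $K'$ is a two-sided ideal of $T$: for $r=(s_i,s_j,y)\in T$ and $x\in K'$ we have $s_jx=0$ and $yx=0$ because $e_je_i=0$ and $J^2=\{0\}$, so $rx=s_ix\in K'$, and symmetrically $xr\in K'$. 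Hence $T_0:=T/K'\cong S_i\oplus S_j\oplus K$, and by Lemma \ref{lem:basics}(3) it suffices to bound $\sigma(T_0)$. Following the discussion preceding Definition \ref{def:lambda}, the simple bimodule $K$ is a simple module over $M_{n_in_j}(\F_q)$, so it is an $\F_q$-vector space with $|K|=q^{n_in_j}$, and the central scalars $\F_q$ act on $K$ in a way that commutes with the left $S_i$- and right $S_j$-actions.

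Next I would reproduce the argument of Proposition \ref{prop:AGLcover} for $T_0$. Writing $S:=S_i\oplus S_j$, the semisimple complements to $K$ are $\msS(T_0)=\{S^{1+x}:x\in K\}$, and by Lemma \ref{lem:JcapZ}(1) we have $K\cap Z(T_0)=\{0\}$, so $|\msS(T_0)|=|K|=q^{n_in_j}$. As in Proposition \ref{prop:AGLcover}(1), the union $\bigcup_{x\in K}S^{1+x}$ already covers every coset $s+K$ with $s\notin\bigcup_{x\ne 0}C_S(x)$, since for such $s$ the map $x\mapsto s^{1+x}$ is injective and hence $s^{1+K}=s+K$. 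By Lemma \ref{lem:MaxSubringClassification}(2), the simplicity of $K$ forces every maximal subring of $T_0$ not containing $K$ to lie in $\msS(T_0)$, while those containing $K$ have the form $M\oplus K$ with $M$ maximal in $S$; thus extending $\msS(T_0)$ to a cover of $T_0$ amounts precisely to covering $\bigcup_{x\ne 0}C_S(x)$ by maximal subrings of $S$, exactly as in the earlier proof. Each $C_S(x)$ with $x\ne0$ is a proper subring of $S$ (as $x\notin Z(T_0)$), hence lies in some maximal subring of $S$.

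Finally, the count is controlled by the central $\F_q$-action: since scalar multiplication by $c\in\F_q^\times$ commutes with the left $S_i$- and right $S_j$-actions on $K$, one has $C_S(cx)=C_S(x)$, and because $\F_q^\times$ acts freely on $K\setminus\{0\}$ the number of distinct centralizers $C_S(x)$ is at most $(|K|-1)/(q-1)=(q^{n_in_j}-1)/(q-1)$. Covering $\bigcup_{x\ne0}C_S(x)$ by one maximal subring of $S$ per distinct centralizer and adjoining the $q^{n_in_j}$ conjugates in $\msS(T_0)$ yields a cover of $T_0$ of size at most
\[
q^{n_in_j}+\frac{q^{n_in_j}-1}{q-1}=\frac{q^{n_in_j+1}-1}{q-1},
\]
so $\sigma(T)\le\sigma(T_0)\le(q^{n_in_j+1}-1)/(q-1)$. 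I expect the only genuinely new point—beyond quoting Proposition \ref{prop:AGLcover}—to be confirming that $K$ carries a compatible $\F_q$-vector space structure with $\F_q^\times$ acting freely, since this is what pins down the exact constant $(q^{n_in_j+1}-1)/(q-1)$; verifying that allowing $S_j$ to be a matrix ring (rather than a field) leaves the centralizer bookkeeping intact is then routine, as the relevant identities $s_jx=0$, $xs_i=0$, and $C_S(cx)=C_S(x)$ hold verbatim.
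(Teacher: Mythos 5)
Your proof is correct and follows essentially the same route as the paper's: reduce to a simple bimodule by factoring out an ideal complement, cover $T_0$ by the conjugates $S^{1+x}$ together with subrings built from the centralizers $C_S(x)$ (using the $s+K = s^{1+K}$ injectivity argument), and count centralizers via the scaling invariance $C_S(cx)=C_S(x)$ for $c \in \F_q^\times$. The only cosmetic differences are that the paper uses the proper subrings $C_S(x)\oplus J$ directly rather than enlarging them to maximal subrings, and your explicit justification that $\F_q$ acts centrally and freely on $K\setminus\{0\}$ spells out a point the paper leaves implicit.
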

\begin{proof}
For convenience, let $S = S_i \oplus S_j$ and $J=J_{ij}$. First we argue that we may assume $J$ is a simple ($S_i$, $S_j$)-bimodule. Indeed, if $J$ is not simple, then let $X \subseteq J$ be a simple ($S_i$, $S_j$)-bimodule. By Lemma \ref{lem:Peirce}, $X$ has an ideal complement $\whX$ in $J$. Since $T/\whX \cong S_i \oplus S_j \oplus X$, we get $\sigma(T) \le \sigma(T/\whX) = \sigma(S_i \oplus S_j \oplus X)$. Hence, we may assume that $J$ is simple.

For each nonzero $x \in J$, define $C_S(x) := \{s \in S: sx = xs\}$. Then, each $C_S(x)$ is a subring of $S$. By Lemma \ref{lem:JcapZ}(1), $C_S(x) \subsetneq S$ for all nonzero $x \in J$, so $C_S(x) \oplus J$ is a proper subring of $T$ for all $x \ne 0$.

Let $\mcC = \msS(T) \cup \{C_S(x) \oplus J : x \in J, x \ne 0\}$. We will show that $\mcC$ is a cover of $T$. Suppose $t \in T \backslash \bigcup_{x \in J \setminus\{0\}} C_S(x) \oplus J$; that is, suppose $t \in s + J$ for some $s \in S \backslash \bigcup_{x \in J \setminus\{0\}} C_S(x)$.

We claim that $s+J = s^{1+J}$. We know that $s^{1+x} = s + sx-xs$ for all $x \in J$, so $s^{1+J} \subseteq s+J$. Furthermore, $s^{1+x_1}=s^{1+x_2}$ if and only if $s^{1+(x_1-x_2)} = s$. Since $s \notin C_S(x)$ for all $x \ne 0$, this means that $s^{1+x_1}=s^{1+x_2}$ if and only if $x_1=x_2$. Hence, $|s^{1+J}| = |J| = |s+J|$, and so $s+J = s^{1+J}$. Thus,
\begin{equation*}
t \in s + J = s^{1 + J} \subseteq \bigcup_{S' \in \msS(T)} S',
\end{equation*} 
and $\mathcal{C}$ is a cover of $T$. 

Finally, note that $C_S(cx) = C_S(x)$ whenever $c \in \F_q^\times$. This means that there are at most $(|J| - 1)/(q-1) = (q^{n_i n_j} - 1)/(q - 1)$ subrings $C_S(x)$ of $S$. Since $J$ is a simple $(S_i, S_j)$-bimodule, $|J|=q^{n_in_j}$. Therefore,
\begin{equation*}
\sigma(T) \le |J| + \frac{|J| - 1}{q-1} = \frac{q^{n_i n_j + 1} - 1}{q -1},
\end{equation*}
as required.
\end{proof}

\begin{lem}\label{lem:|J|lowerbound}
Let $1 \le i, j \le N$ such that $i \ne j$ and $n_j=1$, and let $q=q_i \otimes q_j$. If $J_{ij}$ is a simple ($S_i$, $S_j$)-bimodule and $R$ is $\sigma$-elementary, then $q^{n_i}+1 \le \sigma(R)$. If, in addition, $n_i = 1$, then $R = S_i \oplus S_j \oplus J_{ij}$ and $\sigma(R)=q+1$.
\end{lem}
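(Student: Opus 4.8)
The plan is to first establish the lower bound $\sigma(R)\ge q^{n_i}+1$ by exhibiting $q^{n_i}$ maximal subrings that are forced into every minimal cover and then showing they fail to cover $R$. Since $J_{ij}$ is a simple $(S_i,S_j)$-bimodule, Lemma \ref{lem:Peirce}(1) lets me take its ideal complement $I:=\whJ_{ij}=\bigoplus_{(k,l)\ne(i,j)}J_{kl}$; then $J/I\cong J_{ij}$ is simple, so $I$ is maximal among the subideals of $R$ contained in $J$. By Lemma \ref{lem:MaxSubringClassification}(2), for each $S'\in\msS(R)$ the subring $S'\oplus I$ is a maximal subring of $R$ not containing $J$ (indeed $(S'\oplus I)\cap J=I\subsetneq J$), and by Proposition \ref{prop:idealcomplements} every such subring lies in every minimal cover of $R$.

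Next I would count the distinct subrings of this form. By Lemma \ref{lem:Peirce}(3) the number of conjugates of $S\oplus I$ equals $|J:(I+Z)|$, where $Z=J\cap Z(R)$. The key computational step is to show $Z\subseteq I$: for a central $z$, comparing the Peirce components of $e_iz=\sum_l z_{il}$ and $ze_i=\sum_k z_{ki}$, the former contains $z_{ij}$ while the latter (a sum of components lying in $J_{ki}$) has no $J_{ij}$-component because $i\ne j$, forcing $z_{ij}=0$ and hence $z\in I$. Thus $I+Z=I$, and the count is $|J:I|=|J_{ij}|=q^{n_i}$, using $n_j=1$ and simplicity of $J_{ij}$. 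These $q^{n_i}$ subrings each contain $I$ and have cardinality $|R|/q^{n_i}$, so their union has size at most $|R|-(q^{n_i}-1)|I|<|R|$; they do not cover $R$. As all of them are forced, a minimal cover needs at least one further subring, giving $\sigma(R)\ge q^{n_i}+1$.

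For the second statement, assume additionally $n_i=1$, so $q^{n_i}+1=q+1$. Set $T:=S_i\oplus S_j\oplus J_{ij}$; this is a residue ring of $R$ by Lemma \ref{lem:Peirce}(4) with $\sigma(R)\le\sigma(T)$ by Lemma \ref{lem:JcapZ}(3), and since $n_i=n_j=1$ and $J_{ij}$ is the simple bimodule of order $q=q_i\otimes q_j$, one identifies $T\cong A(1,q_i,q_j)$. I would then invoke Theorem \ref{thm:smalln}(1). If $(q_i,q_j)=(4,4)$ then $\sigma(T)=4<5=q+1\le\sigma(R)$, impossible; if $(q_i,q_j)=(2,2)$ then $\sigma(T)=3=q+1$ forces $\sigma(R)=q+1$ and hence $R=T$ (a proper nonzero quotient would contradict $\sigma$-elementarity), but $A(1,2,2)$ is not $\sigma$-elementary, again a contradiction. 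Hence $(q_i,q_j)$ is neither pair, so $\sigma(T)=q+1$ and $q+1\le\sigma(R)\le\sigma(T)=q+1$ gives $\sigma(R)=q+1$. Finally, were $R\ne T$, then $T$ would be a proper nonzero quotient of $R$ and $\sigma$-elementarity would give $\sigma(R)<\sigma(T)=q+1$, a contradiction; therefore $R=S_i\oplus S_j\oplus J_{ij}$.

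The main obstacle is the centrality computation $Z\subseteq I$, which is what guarantees that the forced family has size exactly $q^{n_i}$ rather than something smaller; once that is in hand, the remaining work is bookkeeping with the forced-subring machinery of Proposition \ref{prop:idealcomplements}, a cardinality estimate for the union, and the identification $T\cong A(1,q_i,q_j)$ combined with Theorem \ref{thm:smalln}(1).
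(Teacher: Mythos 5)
Your proof is correct and follows essentially the same route as the paper's: the same family of $q^{n_i}$ forced maximal subrings $S' \oplus \whJ_{ij}$ (counted by the same centrality computation, which the paper packages inside Lemma \ref{lem:JcapZ} applied to $T = S_i \oplus S_j \oplus J_{ij}$), the same union-cardinality argument yielding the extra $+1$, and the same use of the residue ring $T$ together with $\sigma$-elementarity to force $R = T$ when $n_i = 1$. The only cosmetic difference is in the upper bound $\sigma(T) \le q+1$: the paper invokes the generic estimate of Lemma \ref{lem:aglupperbound} rather than the exact values of Theorem \ref{thm:smalln}(1), which spares it your case analysis of $(q_i,q_j)=(2,2)$ and $(4,4)$ (those cases being vacuous either way).
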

\begin{proof}
Assume that $J_{ij}$ is simple and $R$ is $\sigma$-elementary. Let $T = S_i \oplus S_j \oplus J_{ij}$. Then, for each $x \in J_{ij}$, $(S_i \oplus S_j)^{1+x}$ is a maximal subring of $T$. By Lemma \ref{lem:JcapZ}, $J_{ij} \cap Z(T) = \{0\}$, so there are $|J_{ij}|=q^{n_i}$ distinct conjugates of $S_i \oplus S_j$ in $T$.

Let $\whS_{ij} = \bigoplus_{k \ne i, k \ne j} S_k$ and let $\whJ_{ij}$ be an ideal complement to $J_{ij}$ in $J$. Then, for any conjugate $S'$ of $S_i \oplus S_j$ in $T$, we have
\begin{equation*}
R = (S' \oplus \whS_{ij} \oplus \whJ_{ij}) \oplus J_{ij}.
\end{equation*}
Since $J_{ij}$ is simple, each subring $S' \oplus \whS_{ij} \oplus \whJ_{ij}$ is maximal. On the one hand, because $R$ is $\sigma$-elementary, each such subring is contained in every minimal cover of $R$ by Lemma \ref{lem:SigmaElementary}.
So, $q^{n_i} \le \sigma(R)$. On the other hand, the union of all the conjugates $S'$ cannot cover $T$. To see this, let $A \subseteq T$ be the intersection of all $q^{n_i}$ conjugates of $S_i \oplus S_j$. Then, the union of all the conjugates covers at most
\begin{equation*}
|J_{ij}|\big(|S_i \oplus S_j| - |A|\big) + |A| = |T| - |A|\big(|J|-1\big)
\end{equation*}
elements of $T$. It follows that the union of all the subrings $S' \oplus \whS_{ij} \oplus \whJ_{ij}$ cannot cover $R$. Hence, $q^{n_i}+1 \le \sigma(R)$.

Lastly, by Lemmas \ref{lem:JcapZ} and \ref{lem:aglupperbound}, $\sigma(R) \le \sigma(T) \le (q^{n_i + 1}-1)/(q-1)$. When $n_i=1$, this forces $\sigma(R)=q+1$ and $R=T$ because $R$ is $\sigma$-elementary.
\end{proof}

Recall the definitions for $\text{Irr}(p,d)$ and $\tau(q)$ stated prior to Theorem \ref{thm:main}. It is shown in \cite[Theorem 3.5]{Werner} that a direct sum $\bigoplus_{i=1}^t \F_q$ of copies of $\F_q$ is coverable if and only if $t \ge \tau(q)$.

\begin{lem}\label{lem:fieldbounds}
Let $q=p^d$, let $t \ge \tau(q)$, and let $T = \bigoplus_{i=1}^t \F_{q}$. If $d=1$, then $\sigma(T) = \tfrac{1}{2}(p^2+p)$. If $d \ge 2$, then $\sigma(T) \le q^2/(2d)$.
\end{lem}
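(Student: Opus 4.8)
The plan is to reduce everything to the threshold case $t=\tau(q)$, where the covering number is already known, and then estimate the resulting expression. Since $t\ge\tau(q)$, the ideal $I=\bigoplus_{i=\tau(q)+1}^{t}\F_q$ consisting of the last $t-\tau(q)$ coordinates satisfies $T/I\cong\bigoplus_{i=1}^{\tau(q)}\F_q$, so by Lemma \ref{lem:basics}(3) we get $\sigma(T)\le\sigma(T/I)=\sigma\bigl(\bigoplus_{i=1}^{\tau(q)}\F_q\bigr)$. The latter ring is a commutative semisimple $\sigma$-elementary ring, whose covering number was computed in \cite{Werner} (and is recorded in Theorem \ref{thm:main}(1)) as $\tau(q)\nu(q)+d\binom{\tau(q)}{2}$. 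Thus it suffices to estimate this quantity in the two cases.

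For $d=1$ we have $q=p$, $\tau(p)=p$, and $\nu(p)=1$, so the formula gives $p+\binom{p}{2}=\tfrac12(p^2+p)$, which yields the upper bound $\sigma(T)\le\tfrac12(p^2+p)$. To upgrade this to the asserted equality I would prove the matching lower bound directly from the structure of the maximal subrings of $T$, which are the $t$ coordinate subrings $\{x_i=0\}$ and the $\binom{t}{2}$ diagonal subrings $\{x_i=x_j\}$. The key point is that the all-nonzero points $(\F_p^\times)^t$ can only be covered by diagonal subrings, and covering them forces the chosen diagonals to form a graph on $[t]$ that admits no proper $(p-1)$-coloring, hence has chromatic number $\ge p$ and therefore at least $\binom{p}{2}$ edges; a separate analysis of points with exactly one zero coordinate (surrounded by a rainbow of distinct nonzero values) then forces at least $p$ additional subrings. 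Combining these two disjoint requirements gives $\sigma(T)\ge\binom{p}{2}+p=\tfrac12(p^2+p)$, so equality holds for every $t\ge p$.

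For $d\ge 2$ it remains to show $\tau(q)\nu(q)+d\binom{\tau(q)}{2}\le q^2/(2d)$. Writing $N=|{\rm Irr}(p,d)|$, so that $\tau(q)=N+1$ and $\nu(q)=\omega(d)$, the Gauss count $dN=\sum_{e\mid d}\mu(d/e)p^{e}$ gives $dN\le p^{d}=q$, and the dominant term $d\binom{N+1}{2}=\tfrac12 d(N+1)N$ is of the same order as $q^2/(2d)$. The plan is to feed into the estimate the sharper information that the leading Möbius correction $-p^{d/\ell}$ (with $\ell$ the smallest prime divisor of $d$) makes $dN$ strictly smaller than $q$, namely $dN\le q-p^{d/\ell}+(\text{lower order})$, and then verify that $d(N+1)N+2d(N+1)\omega(d)$ remains below $q^2$. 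The main obstacle is precisely this final numerical estimate: the leading terms of $d\binom{\tau(q)}{2}$ and of $q^2/(2d)$ coincide to first order, so the inequality is genuinely delicate and cannot be settled by crude bounds such as $N\le q/d$ alone. The entire difficulty concentrates in controlling the lower-order contributions—the extra $+d$ coming from $\tau(q)=N+1$, the subfield term $\tau(q)\omega(d)$, and the exact magnitude of the Möbius correction in $N$—and in handling the borderline configurations (small $d$ and the least admissible primes $\ell$) by hand rather than asymptotically.
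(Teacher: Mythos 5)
Neither half of your proposal is complete, so let me first record how the paper argues and then locate the gaps. The paper's proof is a three-line computation: the exact value $\sigma(T)=\tau(q)\nu(q)+d\binom{\tau(q)}{2}$ is known from \cite{Werner} for \emph{every} $t\ge\tau(q)$ (Theorem~\ref{thm:main}(1) records the case $t=\tau(q)$), so the $d=1$ equality is immediate and no lower-bound argument is needed; for $d\ge 2$ the paper combines $\nu(q)\le d/2$ with the claim $\tau(q)\le q/d$ to get $\sigma(T)\le\tfrac{d}{2}\tau(q)^2\le q^2/(2d)$. Your $d=1$ treatment replaces the citation by a hand-made lower bound, and that argument fails for $t>p$: a point with exactly one zero coordinate and $t-1\ge p$ \emph{distinct} nonzero coordinates does not exist, and in fact for $t\ge p+1$ the $\binom{p+1}{2}$ diagonal subrings supported on any fixed $p+1$ coordinates already cover $T$ (pigeonhole on $p$ values), so a minimal cover need not contain a single coordinate subring and your ``two disjoint requirements'' are not requirements at all. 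The repair is to count both families at once: if a cover uses the diagonals of a graph $E$ on $[t]$ and the coordinate subrings indexed by $Z\subseteq[t]$, adjoin a new vertex $v$ adjacent exactly to the vertices in $Z$; the covering condition is equivalent to the augmented graph having no proper $p$-coloring, hence it has at least $\binom{p+1}{2}=p+\binom{p}{2}$ edges, i.e.\ $|E|+|Z|\ge p+\binom{p}{2}$.

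Your $d\ge2$ case is, by your own account, a plan rather than a proof: the ``final numerical estimate'' you defer is the entire content. But your suspicion that crude bounds cannot work is correct, and the situation is worse than you say: the asserted inequality is \emph{false}. The paper's step from $|{\rm Irr}(p,d)|<q/d$ to $\tau(q)=|{\rm Irr}(p,d)|+1\le q/d$ is exactly the illegitimate inference you refused to make ($q/d$ need not be an integer), and for $d$ prime it genuinely fails: there $d\cdot|{\rm Irr}(p,d)|=q-p$, so $\tau(q)=(q-p+d)/d>q/d$ as soon as $d>p$, and one computes $2d\,\sigma(T)=(q-p+d)(q-p+2)=q^2+q(d-2p+2)+(d-p)(2-p)$, which exceeds $q^2$ for every prime $d>2p-2$. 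The smallest counterexample is $q=8$: here $\tau(8)=3$ and $\nu(8)=1$, so $\sigma(\F_8\oplus\F_8\oplus\F_8)=3+3\binom{3}{2}=12$ (one can check directly that all $12$ maximal subrings of $\F_8^{\oplus 3}$ are needed), while $q^2/(2d)=64/6<11$; likewise $q=32$ gives $\sigma(T)=7+5\binom{7}{2}=112>1024/10$. So no completion of your plan---or of the paper's proof---can exist as stated; the lemma must be weakened, e.g.\ to $\sigma(T)\le q^2/(2d)+q+d$, which follows from the legitimate bound $\tau(q)<q/d+1$ (or from your intended Möbius estimate $d\cdot|{\rm Irr}(p,d)|\le q-p^{d/a}$ with $a$ the least prime divisor of $d$). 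Any such bound, indeed anything at most $q_j^2$, is all that the lemma's unique application in Proposition~\ref{prop:fieldcopies} uses, so the paper's downstream results are unaffected; but the constant $q^2/(2d)$ itself is not provable, and your instinct about where the difficulty sits was the correct one.
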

\begin{proof}
As stated in Theorem \ref{thm:main}(1), we have
\begin{equation*}
\sigma(T) = \tau(q) \nu(q) + d \displaystyle\binom{\tau(q)}{2},
\end{equation*}
where $\nu(q)$ is equal to 1 if $d=1$, and is equal to the number of prime divisors of $d$ otherwise. Via this formula, it is clear that $\sigma(T)= p + \binom{p}{2} = \tfrac{1}{2}(p^2+p)$ when $d=1$. So, assume that $d \ge 2$. Then, $\nu(q) \le \tfrac{d}{2}$, and
\begin{equation}\label{eq:fieldbound}
\sigma(T) \le \tau(q) \cdot \tfrac{d}{2} + \tfrac{d}{2} \cdot \tau(q)\cdot(\tau(q)-1) = \tfrac{d}{2} \cdot \tau(q)^2.
\end{equation}
Now, it is well known that $|\text{Irr}(p,d)| \le p^d/d$, with equality if and only if $d=1$. When $d \ge 2$, $|\text{Irr}(p,d)| < p^d/d$, and hence $\tau(q) \le q/d$. Combining this with \eqref{eq:fieldbound} produces the desired upper bound for $\sigma(T)$.
\end{proof}

\section{Restrictions on \texorpdfstring{$J$}{J}}\label{sect:J}

Maintain the notation given at the start of Section \ref{sect:Peirce}. Our goal in this section is to prove that when $R$ is $\sigma$-elementary, most of the bimodules $J_{ij}$ must be zero. In essence, this means that a $\sigma$-elementary ring is ``almost'' semisimple. We will first examine the bimodules $J_{ij}$ with $i \ne j$, and then consider the ($S_k$, $S_k$)-bimodules $J_{kk}$.

\begin{thm}\label{thm:Jij=0}
Let $1 \le i, j \le N$ with $i \ne j$. Assume that $J_{ij} \ne \{0\}$ and $R$ is $\sigma$-elementary. Then, the following hold:
\begin{enumerate}[(1)]
\item $J_{k \ell} = \{0\}$ for all $1 \le k, \ell \le N$ such that $k \ne \ell$ and $(k, \ell) \ne (i,j)$. 
\item $J_{ij}$ is a simple ($S_i$, $S_j$)-bimodule.
\item At least one of $S_i$ or $S_j$ is a field.
\end{enumerate}
\end{thm}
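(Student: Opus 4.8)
The plan is to play a lower bound on $\sigma(R)$ coming from forced maximal subrings against an upper bound coming from the residue rings $T_{ij} := S_i \oplus S_j \oplus J_{ij}$. The lower bound is produced as follows: by Proposition~\ref{prop:idealcomplements}, every maximal subring of $R$ not containing $J$ lies in \emph{every} minimal cover, so $\sigma(R)$ is at least the number of such subrings. By Lemma~\ref{lem:MaxSubringClassification}(2) these are exactly the subrings $S' \oplus I$ with $S' \in \msS(R)$ and $I$ a maximal subideal of $J$, and for fixed $I$ there are $|J:(I+Z)|$ of them, where $Z = J \cap Z(R)$ (Lemma~\ref{lem:Peirce}(3)). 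A short computation with the orthogonal idempotents $e_1,\dots,e_N$ shows $Z \subseteq \bigoplus_k J_{kk}$, so whenever $J/I$ is an off-diagonal simple bimodule $V$ we have $Z \subseteq I$, and that $I$ contributes exactly $|V|$ forced subrings. The upper bound is $\sigma(R) \le \sigma(T_{ij})$ (Lemma~\ref{lem:JcapZ}(3)), combined with the estimates for $\sigma(T_{ij})$ from Section~\ref{sect:AGL} (Lemma~\ref{lem:aglupperbound}, Corollary~\ref{cor:AGLn>1}, Theorem~\ref{thm:smalln}), upon identifying $T_{ij}$, when $J_{ij}$ is simple, with the ring $A(m,q_1,q_2)$ whose matrix block has dimension $m = \max(n_i,n_j)$.

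I would prove part~(2) first, as it is cleanest. Writing $q = q_i \otimes q_j$, a simple $(S_i,S_j)$-bimodule has size $q^{n_in_j}$, and if $J_{ij}$ has length $\lambda \ge 2$ then, since the simple bimodule has endomorphism ring $\F_q$, the number of maximal subideals $I$ of $J$ with $J/I$ of this type is $(q^\lambda - 1)/(q-1)$. Each contributes $q^{n_in_j}$ forced subrings, so $\sigma(R) \ge \frac{q^\lambda - 1}{q-1}\,q^{n_in_j} \ge (q+1)q^{n_in_j}$. But Lemma~\ref{lem:aglupperbound} gives $\sigma(R) \le \sigma(T_{ij}) \le (q^{n_in_j+1}-1)/(q-1) = 1 + q + \cdots + q^{n_in_j}$, which is strictly smaller; this forces $\lambda = 1$. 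For part~(3), suppose both $n_i,n_j \ge 2$ and take $n_i = \min(n_i,n_j)$. A simple sub-bimodule of $J_{ij}$ already forces $q^{n_in_j} \le \sigma(R)$. On the other hand $S_i \cong M_{n_i}(q_i)$ is a residue ring of $R$ (quotient by the nonzero ideal $\bigoplus_{k \ne i} S_k \oplus J$), so $\sigma$-elementarity gives $\sigma(R) < \sigma(M_{n_i}(q_i))$, and Lemma~\ref{lem:bounds} (with $d = n_i - 1$) gives $\sigma(M_{n_i}(q_i)) \le q_i^{n_i(n_i-1)} \le q^{n_i(n_i-1)}$. Since $n_i(n_i-1) \le n_in_j - 2$, this yields $q^{n_in_j} \le \sigma(R) < q^{n_in_j}$, a contradiction, so one of $S_i, S_j$ is a field.

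For part~(1), suppose some other off-diagonal block $J_{k\ell} \ne \{0\}$ with $(k,\ell)\ne(i,j)$; by parts~(2) and~(3), each of $J_{ij}$, $J_{k\ell}$ is simple with one matrix side and one field side. If either block has \emph{both} sides fields, Lemma~\ref{lem:|J|lowerbound} forces $R = S_i \oplus S_j \oplus J_{ij}$, so $J = J_{ij}$, contradicting $J_{k\ell}\ne\{0\}$; thus each block has a matrix side of dimension $\ge 2$, which I orient as $n_i, n_k$, with $q_{ij} = q_i^{d}$ and $q_{k\ell} = q_k^{d'}$. The two forced families (collapsing $J_{ij}$ and collapsing $J_{k\ell}$) are disjoint and give $\sigma(R) \ge q_{ij}^{n_i} + q_{k\ell}^{n_k}$. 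Meanwhile $\sigma(R) < \sigma(T_{ij})$; if $n_i = 2$ or $d \ge n_i - n_i/a$ (where $a$ is the least prime divisor of $n_i$) then Theorem~\ref{thm:smalln}(2) and Lemma~\ref{lem:bounds} give $\sigma(T_{ij}) \le q_{ij}^{n_i}$, an immediate contradiction, and otherwise Corollary~\ref{cor:AGLn>1} gives the unconditional bound $\sigma(T_{ij}) \le q_{ij}^{n_i} + \binom{n_i}{d}_{q_i} + \omega(d)$. Combining yields $q_{k\ell}^{n_k} < \binom{n_i}{d}_{q_i} + \omega(d)$, and symmetrically $q_{ij}^{n_i} < \binom{n_k}{d'}_{q_k} + \omega(d')$.

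The crux, and the step I expect to be the main obstacle, is showing these two cross-inequalities are numerically incompatible. The case $d = d' = 1$ is clean: there $\binom{n_i}{1}_{q_i} = (q_i^{n_i}-1)/(q_i-1)$, and multiplying the two inequalities gives $(q_i - 1)(q_k - 1) < 1$, which is impossible. The general case requires the $q$-binomial estimate $\binom{n}{d}_q < 4\,q^{d(n-d)}$; feeding it into both inequalities and comparing $p$-adic exponents reduces, after cancellation, to an inequality of the shape $a(d^2 - 3) + a'(d'^2 - 3) < 0$ (with $q_i = p^a$, $q_k = p^{a'}$), which fails once $d, d' \ge 2$, while the mixed cases $d = 1 < d'$ are dispatched by the sharper multiplicative bound used for $d=1$. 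Managing this estimate uniformly, together with the low-dimensional rings and the $(n,q_1)=(3,2)$ exception flagged in Proposition~\ref{prop:AGLupperbound}, is the real technical work; the remainder is bookkeeping around the forced-subring count.
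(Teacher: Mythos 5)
Your proofs of parts (2) and (3) are correct and are essentially the paper's own arguments: for (2), forcing $2q^{n_in_j}$ (in your refinement, $\tfrac{q^\lambda-1}{q-1}q^{n_in_j}$) maximal subrings into every minimal cover and contradicting the geometric-series bound of Lemma~\ref{lem:aglupperbound}; for (3), playing the forced count $q^{n_in_j}\le\sigma(R)$ against $\sigma(R)<\sigma(S_i)=\sigma(M_{n_i}(q_i))$, which Lemma~\ref{lem:bounds} makes strictly smaller than $q^{n_in_j}$. Your counting of maximal subideals and the verification that $Z=J\cap Z(R)$ lies inside any such ideal are both sound.

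Part (1) is where there is a genuine gap. You route the argument through Lemma~\ref{lem:|J|lowerbound}, Theorem~\ref{thm:smalln}, and Corollary~\ref{cor:AGLn>1}, arriving at the cross-inequalities $q_{k\ell}^{n_k} < \binom{n_i}{d}_{q_i}+\omega(d)$ and $q_{ij}^{n_i} < \binom{n_k}{d'}_{q_k}+\omega(d')$, and then you explicitly defer the decisive step---showing these are incompatible---calling it ``the real technical work'' and offering only a sketch (a $q$-binomial estimate, a $p$-adic exponent count, and a separate multiplicative trick when $d=d'=1$). As written this is not a proof: the case analysis in $d,d'$, the constants, and the $\omega$ terms are all left unmanaged. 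The sketch is likely completable, but the observation you missed is that none of this machinery is needed: the same crude bound you already used in part (2) finishes part (1) symmetrically. Writing $T_{ij}=S_i\oplus S_j\oplus J_{ij}$, $T_{k\ell}=S_k\oplus S_\ell\oplus J_{k\ell}$, $q=q_i\otimes q_j$, $q'=q_k\otimes q_\ell$, the two disjoint forced families give $q^{n_in_j}+(q')^{n_kn_\ell}\le\sigma(R)$, while Lemma~\ref{lem:aglupperbound} applied to $T_{ij}$ gives $\sigma(R)<(q^{n_in_j+1}-1)/(q-1)$, hence $(q')^{n_kn_\ell}<(q^{n_in_j}-1)/(q-1)<q^{n_in_j}$; applying the same bound to $T_{k\ell}$ gives, symmetrically, $q^{n_in_j}<(q')^{n_kn_\ell}$, an immediate contradiction. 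This is the paper's proof of (1). It requires neither part (2) nor part (3) (Lemma~\ref{lem:aglupperbound} does not assume the bimodules are simple, since its proof passes to a simple quotient), no identification of $T_{ij}$ with $A(n,q_1,q_2)$, and no case split on $d$ versus $n-n/a$; in particular your concern about the $(n,q_1)=(3,2)$ exception of Proposition~\ref{prop:AGLupperbound} is moot, as the cover in Corollary~\ref{cor:AGLn>1} is unconditional and is never needed here.
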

\begin{proof}
Throughout, let $T_{ij} = S_i \oplus S_j \oplus J_{ij}$ and $q = q_i \otimes q_j$.

(1) Let $I_{ij}$ be a maximal subideal of $J_{ij}$. If $S'$ is a conjugate of $S_i \oplus S_j$ in $T_{ij}$, then $S' \oplus I_{ij}$ is a maximal subring of $T_{ij}$. Let $\whJ_{ij}$ be an ideal complement to $J_{ij}$ in $J$, and let 
\begin{equation*}
\whS_{ij} := \bigoplus_{m \ne i, m \ne j} S_m,
\end{equation*}
which is a direct sum complement to $S_i \oplus S_j$ in $S$. Then, for any conjugate $S'$ of $S_i \oplus S_j$ in $T_{ij}$,
\begin{equation}\label{eq:ijmax}
(S' \oplus \whS_{ij}) \oplus (I_{ij} \oplus \whJ_{ij})
\end{equation}
is a maximal subring of $R$. The number of such subrings is equal to the number of distinct maximal subrings of $T_{ij}$ of the form $S' \oplus I_{ij}$. Since the size of a simple $(S_i, S_j)$-bimodule is $q^{n_i n_j}$, by Lemma \ref{lem:JcapZ}(2), $R$ contains $|J_{ij}:I_{ij}| = q^{n_i n_j}$ maximal subrings of the form in \eqref{eq:ijmax}.

Suppose that $J_{k \ell} \ne \{0\}$ for some $(k, \ell) \ne (i, j)$. The arguments of the previous paragraph can be applied to $S_k \oplus S_\ell \oplus J_{k \ell}$. Employing the analogous notation and letting $q' = q_k \otimes q_\ell$, we see that $R$ contains $(q')^{n_k n_\ell}$ maximal subrings of the form
\begin{equation}\label{eq:klmax}
(S'' \oplus \whS_{k \ell}) \oplus (I_{k \ell} \oplus \whJ_{k \ell}),
\end{equation}
where $S''$ is a conjugate of $S_k \oplus S_\ell$. Moreover, no ring of the form of \eqref{eq:ijmax} can equal a ring of the form of \eqref{eq:klmax}, because $I_{ij} \oplus \whJ_{ij} \ne I_{k \ell} \oplus \whJ_{k \ell}$. By Proposition \ref{prop:idealcomplements},
\begin{equation}\label{eq:qq'}
q^{n_i n_j} + (q')^{n_k n_\ell} \le \sigma(R).
\end{equation}

Now, by Lemmas \ref{lem:JcapZ} and \ref{lem:aglupperbound},
\begin{equation}\label{eq:q}
\sigma(R) < \sigma(T_{ij}) \le \dfrac{q^{n_i n_j + 1}-1}{q-1}.
\end{equation}
Thus, by \eqref{eq:qq'} and \eqref{eq:q}, $q^{n_i n_j} + (q')^{n_k n_\ell} < (q^{n_i n_j + 1}-1)/(q-1)$. This gives
\begin{equation*}
(q')^{n_k n_\ell} < \dfrac{q^{n_i n_j + 1}-1}{q-1} - q^{n_i n_j} = \dfrac{q^{n_i n_j }-1}{q-1} < q^{n_i n_j}.
\end{equation*}
However, we also have $\sigma(R) < \sigma(S_k \oplus S_\ell \oplus J_{k \ell})$. Working as above, this leads to $q^{n_i n_j} < (q')^{n_k n_\ell}$, a contradiction.

(2) Suppose that $\lambda_{ij} \ge 2$. Then, $R$ still contains maximal subrings as in \eqref{eq:ijmax}. But, $J_{ij}$ contains at least two distinct maximal subideals, so we have at least two choices for $I_{ij}$. Thus, $R$ contains at least $2 q^{n_i n_j}$ maximal subrings of the form of \eqref{eq:ijmax}. Appealing again to Lemma \ref{lem:aglupperbound}, we have
\begin{equation*}
2 q^{n_i n_j} \le \sigma(R) < \sigma(T_{ij}) \le \dfrac{q^{n_i n_j + 1}-1}{q-1}.
\end{equation*}
But then, $2 q^{n_i n_j}(q-1) < q^{n_i n_j + 1}-1$, which is impossible. Hence, $\lambda_{ij}=1$ and $J_{ij}$ is simple.

(3) Suppose that both $S_i$ and $S_j$ are noncommutative, i.e., that $n_i \ge 2$ and $n_j \ge 2$. As in part (1), we obtain $q^{n_i n_j} \le \sigma(R)$ by considering maximal subrings of the form in \eqref{eq:ijmax}. Without loss of generality, assume that $n_i \le n_j$. Since $R$ is $\sigma$-elementary, $\sigma(R) < \sigma(R/J) \le \sigma(S_i)$. But,
\begin{equation*}
\sigma(S_i) = \sigma(M_{n_i}(q_i)) < q^{n_i^2} \le q^{n_i n_j}
\end{equation*}
a contradiction.  Thus, at least one of $S_i$ or $S_j$ must be a field.

\end{proof}

\begin{Rem}
\label{rem:mutmut1}
Theorem \ref{thm:Jij=0} holds mutatis mutandis for unital maximal subrings and unital coverings of $R$. This fact will be used later in Section \ref{sect:othermain} when we examine $\sigma_u$-elementary rings.
\end{Rem}

It remains possible that $R$ is $\sigma$-elementary and $J_{ii} \ne \{0\}$ for some $i$. Indeed, this case cannot be excluded entirely, because $\F_q (+) \F_q^2$ is $\sigma$-elementary and has a nonzero radical. However, we can restrict the possibilities for $J_{ii}$ in this case as well. First, we show that if $R$ is $\sigma$-elementary and $S_i$ is noncommutative, then $J_{ii} = \{0\}$.

\begin{lem}\label{lem:Si-Si}
Let $1 \le i \le N$ and let $T = S_i \oplus J_{ii}$. Assume that $n_i \ge 2$ and $J_{ii} \ne \{0\}$. Then, $|J_{ii} \cap Z(T)| = q^{\lambda_{ii}}$.
\end{lem}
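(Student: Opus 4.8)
The plan is to identify $J_{ii}\cap Z(T)$ intrinsically and then evaluate it one simple summand at a time. Recall that here $q = q_i\otimes q_i = q_i$. First I would pin down the centrality condition. Since $J^2=\{0\}$, any two elements of $J_{ii}\subseteq J$ annihilate each other, so every $x\in J_{ii}$ already commutes with all of $J_{ii}$. Writing a general element of $T=S_i\oplus J_{ii}$ as $s+y$ with $s\in S_i$ and $y\in J_{ii}$, we get $x(s+y)-(s+y)x = xs-sx$ for each $x\in J_{ii}$. Hence
\[ J_{ii}\cap Z(T) = \{x\in J_{ii} : sx = xs \text{ for all } s\in S_i\}, \]
that is, the set of elements of the bimodule $J_{ii}$ on which the left and right actions of $S_i$ coincide; call this set $C(J_{ii})$.

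Second, I would reduce to simple bimodules. By Definition \ref{def:lambda}, $J_{ii}=\bigoplus_{m=1}^{\lambda_{ii}} V_m$ is a direct sum of $\lambda_{ii}$ simple $(S_i,S_i)$-bimodules. Since each $V_m$ is a sub-bimodule, for $x=\sum_m x_m$ with $x_m\in V_m$ we have $sx-xs=\sum_m(sx_m-x_m s)$ with each $sx_m-x_m s\in V_m$; because the sum is direct, $x\in C(J_{ii})$ if and only if $x_m$ lies in the corresponding set $C(V_m)$ for every $m$. Thus $J_{ii}\cap Z(T)=\bigoplus_{m=1}^{\lambda_{ii}} C(V_m)$, and it remains to prove $|C(V_m)|=q$ for each $m$.

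The crux is this single-summand computation. Under the identification of a simple $(S_i,S_i)$-bimodule with $M_{n_i}(\F_q)$ equipped with its regular (two-sided multiplication) bimodule structure, the requirement $sx=xs$ for all $s\in S_i$ says exactly that $x$ centralizes $S_i$ inside itself, so $x\in Z(M_{n_i}(\F_q))=\F_q\cdot 1_{S_i}$, the scalar matrices, a one-dimensional $\F_q$-space of order $q$. Invariantly, a bimodule map $S_i\to V_m$ is determined by the image $v$ of $1_{S_i}$ subject to $sv=vs$ for all $s$, so the set of admissible $v$ is $\mathrm{Hom}_{(S_i,S_i)}(S_i,V_m)$, which is one-dimensional over $\F_q$ precisely because $V_m\cong S_i$ is simple. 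Combining the two reductions yields $|J_{ii}\cap Z(T)|=\prod_{m=1}^{\lambda_{ii}}|C(V_m)|=q^{\lambda_{ii}}$. I expect the main obstacle to be exactly this last computation — making precise that each simple summand contributes the scalar copy $\F_q$ and nothing more; by contrast, the reduction to $C(J_{ii})$ and the additivity over the decomposition are routine consequences of $J^2=\{0\}$ together with the sub-bimodule structure, and the hypotheses $n_i\ge 2$ and $J_{ii}\ne\{0\}$ are not needed for the stated equality itself (they are what make the lemma useful in the subsequent argument that $J_{ii}=\{0\}$).
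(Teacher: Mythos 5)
Your proposal is correct and takes essentially the same route as the paper: decompose $J_{ii}$ into its $\lambda_{ii}$ simple summands and verify that on each summand---identified with $M_{n_i}(q)$ carrying the two-sided multiplication action---the elements commuting with all of $S_i$ are exactly the scalar matrices, contributing a factor of $q$ per summand. The paper carries out this computation via an explicit block upper-triangular matrix representation of $T$, whereas you phrase the per-summand step as computing $Z(M_{n_i}(q))$, equivalently $\mathrm{Hom}_{(S_i,S_i)}(S_i,V_m) \cong Z(S_i)$; the substance is identical, and your argument rests on the same identification the paper makes (each simple summand taken as the regular bimodule $M_{n_i}(q)$), with your closing remark about the hypotheses $n_i \ge 2$, $J_{ii} \ne \{0\}$ being accurate as well.
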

\begin{proof}
For convenience, let $S_i=M_n(q)$, $J=J_{ii}$, and $\lambda=\lambda_{ii}$. As in \cite[Section 11.8]{Pierce}, $J$ decomposes as a direct sum $\bigoplus_{k=1}^\lambda X_k$ of simple ($M_n(q)$, $M_n(q)$)-bimodules $X_1, \ldots, X_\lambda$. Each $X_k$ is equivalent to a simple $M_{n^2}(q)$ module---hence has order $q^{n^2}$---and the isomorphism type of $T$ is determined entirely by $\lambda$. Thus, we can represent each $X_k$ (additively) as a copy of $M_n(q)$, and represent $T$ in block matrix form as
\begin{equation*}
T = \left\{\begin{pmatrix} A & x_1 & \cdots & x_\lambda\\ 0 & A & \cdots & 0\\ \vdots & & \ddots & & \\ 0 & \cdots & 0 & A \end{pmatrix} : A \in M_n(q), x_k \in X_K \text{ for } 1 \le k \le \lambda \right\}.
\end{equation*}
Via this representation, we see that $x \in J \cap Z(T)$ if and only if $x=\sum_k x_k$ and each $x_k$ is an $n \times n$ scalar matrix. Thus, $|J \cap Z(T)| = q^\lambda$.
\end{proof}

\begin{prop}\label{prop:Jii=0}
Let $1 \le i \le N$ and let $T = S_i \oplus J_{ii}$. Assume that $n_i \ge 2$. 
\begin{enumerate}[(1)]
\item $T$ is coverable, and $\sigma(T) = \sigma(M_{n_i}(q_i))$.
\item If $R$ is $\sigma$-elementary, then $J_{ii} = \{0\}$.
\end{enumerate}
\end{prop}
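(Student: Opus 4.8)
The plan is to prove part (1) first and then deduce part (2) from it. For (1), coverability and the easy inequality are immediate: since $n_i \ge 2$, the ring $S_i \cong M_{n_i}(q_i)$ is noncommutative, so $T$ is noncommutative and hence coverable by Lemma \ref{lem:basics}(2); moreover $T/J_{ii} \cong S_i \cong M_{n_i}(q_i)$, so lifting a minimal cover of the quotient gives $\sigma(T) \le \sigma(M_{n_i}(q_i))$ by Lemma \ref{lem:basics}(3). The substance of (1) is the reverse inequality $\sigma(T) \ge \sigma(M_{n_i}(q_i))$.

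For the lower bound I would work through the classification in Lemma \ref{lem:MaxSubringClassification}. The maximal subrings of $T$ split into those containing $J_{ii}$ -- which are exactly the inverse images under $\pi\colon T \to T/J_{ii} \cong M_{n_i}(q_i)$ of the maximal subrings of $M_{n_i}(q_i)$ -- and the \emph{transverse} ones of the form $S' \oplus I$, with $S' \in \msS(T)$ and $I$ a subideal of $J_{ii}$ maximal among subideals of $T$ contained in $J_{ii}$. A cover built only from the first kind corresponds under $\pi$ to a cover of $M_{n_i}(q_i)$, so it uses at least $\sigma(M_{n_i}(q_i))$ subrings; the task is to show the transverse subrings cannot do better. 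The key is a sparsity estimate: over the fiber $\pi^{-1}(g)$ of a regular element $g$ (one whose characteristic polynomial is irreducible, so that $C_{M_{n_i}(q_i)}(g)$ is a Singer torus), a single transverse subring $S'\oplus I$ meets $\pi^{-1}(g)$ in only a coset of $I$, whereas a preimage subring covers the whole fiber at once. Using Lemma \ref{lem:Si-Si} to count the conjugates in $\msS(T)$ -- there are $|J_{ii}|/q_i^{\lambda_{ii}}$ of them, the center $J_{ii}\cap Z(T)$ having order $q_i^{\lambda_{ii}}$ -- one checks that the transverse subrings are far too sparse to cover all the regular fibers, and likewise that the Type~I (``parabolic'') part cannot be economized. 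I would organize this as an application of the minimal-cover criterion in Lemma \ref{lem:mincovercriterion}: after first reducing to the case that $J_{ii}$ is a simple bimodule (by passing to $T/\widehat X$ for an ideal complement $\widehat X$ of a simple sub-bimodule, which only lowers $\sigma$ and makes the transverse subrings genuine complements), take $\Pi$ to be lifts of Singer-cycle elements together with subspace-stabilizer witnesses, and let the classes $\mcM_i$ be the preimages of the Type~I and Type~II maximal subrings of $M_{n_i}(q_i)$; verifying $c(M) \le 1$ for every transverse $M$ then reproduces the Crestani--Lucchini--Mar\'oti lower bound for $\sigma(M_{n_i}(q_i))$ inside $T$. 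This minimality estimate is the main obstacle.

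Granting (1), part (2) is short. Suppose $R$ is $\sigma$-elementary and, for contradiction, that $J_{ii} \neq \{0\}$ with $n_i \ge 2$. By Lemma \ref{lem:Peirce}(4), $T = S_i \oplus J_{ii}$ is a residue ring of $R$, so $\sigma(R) \le \sigma(T)$, and by (1) we have $\sigma(T) = \sigma(M_{n_i}(q_i)) = \sigma(T/J_{ii})$; in particular $T$ is itself not $\sigma$-elementary. In the special case $R = T$ (i.e.\ $J = J_{ii}$ and $S = S_i$) this is already a contradiction, since then $\sigma(R) = \sigma(T) = \sigma(R/J_{ii})$ violates the definition of $\sigma$-elementary. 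In general I would show directly that $J_{ii}$ is \emph{covering-inert}, namely $\sigma(R) = \sigma(R/J_{ii})$, which again contradicts $\sigma(R) < \sigma(R/J_{ii})$. The mechanism is exactly the one used for the lower bound in (1): because $S_i$ is noncommutative, the maximal subrings of $R$ failing to contain $J_{ii}$ meet the fibers over the regular elements of $S_i$ too sparsely to be of use, so a minimal cover of $R$ may be chosen among maximal subrings containing $J_{ii}$; projecting such a cover through $R \to R/J_{ii}$ yields a cover of the same cardinality, giving $\sigma(R/J_{ii}) \le \sigma(R)$ and hence equality. Thus the difficulty is concentrated in (1), and once the sparsity estimate is established, (2) follows by the residue-ring bookkeeping above.
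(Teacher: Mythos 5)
You have the right skeleton---the upper bound $\sigma(T) \le \sigma(M_{n_i}(q_i))$ is obtained exactly as in the paper, and your observation that a transverse maximal subring $S' \oplus I$ meets each fiber of $\pi\colon T \to T/J_{ii}$ in a single coset of $I$ is the germ of a correct proof---but as written the proposal has two genuine gaps. First, the entire lower bound is deferred: you declare the sparsity estimate ``the main obstacle'' and propose to obtain it by feeding Singer-cycle elements into Lemma \ref{lem:mincovercriterion} so as to reproduce the Crestani--Lucchini--Mar\'{o}ti lower bound for $\sigma(M_{n_i}(q_i))$ inside $T$; none of this is carried out (no partition of the maximal subrings, no verification of condition (iv), no computation of $c(M)$), and it is far heavier than necessary. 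Your coset observation already finishes the job without any of that machinery: in a minimal cover of $T$, either the subrings containing $J_{ii}$ project onto a cover of $T/J_{ii} \cong M_{n_i}(q_i)$, forcing at least $\sigma(M_{n_i}(q_i))$ of them, or some fiber $g+J_{ii}$ meets no such subring and must be covered by transverse subrings alone; since every maximal subideal $I$ has index $q_i^{n_i^2}$ in $J_{ii}$, each transverse subring covers only $|J_{ii}|/q_i^{n_i^2}$ points of that fiber, forcing at least $q_i^{n_i^2} > \sigma(M_{n_i}(q_i))$ subrings by Lemma \ref{lem:bounds}. Second, the reduction ``to the case that $J_{ii}$ is simple by passing to $T/\whX$, which only lowers $\sigma$'' is logically backwards: quotienting gives $\sigma(T) \le \sigma(T/\whX)$, so a lower bound proved for the simple quotient says nothing about $\sigma(T)$ from below. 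Non-simple $J_{ii}$ must be handled some other way (the paper uses induction on the length $\lambda_{ii}$).

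For comparison, the paper's proof never analyzes how $M_{n_i}(q_i)$ can be covered. It inducts on $\lambda_{ii}$ and counts conjugates: by Lemmas \ref{lem:Peirce}(3) and \ref{lem:Si-Si}, the maximal subring $S_i \oplus I$ (with $I$ a maximal subideal of $J_{ii}$) has exactly $q_i^{n_i^2-1}$ conjugates in $T$, while $\sigma(T) \le \sigma(M_{n_i}(q_i)) < q_i^{n_i^2-1}$ by Lemma \ref{lem:bounds}; hence every minimal cover of $T$ omits some conjugate, and Lemma \ref{lem:SigmaElementary}(1) together with the inductive hypothesis $\sigma(S_i \oplus I) = \sigma(M_{n_i}(q_i))$ yields the reverse inequality. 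Part (2) is then not ``residue-ring bookkeeping'' layered on (1), but the same pigeonhole run inside $R$: there are $q_i^{n_i^2-1} > \sigma(R)$ maximal subrings $(S_i' \oplus \whS_i) \oplus (I \oplus \whJ_{ii})$, each of which is simultaneously a residue ring of $R$, so omitting one from a minimal cover gives $\sigma(M) \le \sigma(R) < \sigma(M)$ at once. In particular, no intermediate claim $\sigma(R) = \sigma(R/J_{ii})$ is needed---which matters, because your proposed proof of that ``covering-inert'' statement again rests on the unproven sparsity estimate, now in the larger ring $R$ where the fibers of $R \to R/J_{ii}$ can also be attacked by transverse subrings whose ideal parts cut across the other components of $J$.
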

\begin{proof}
As in earlier proofs, we will drop some subscripts for readability. Let $n=n_i$, $q=q_i$, and $\lambda=\lambda_i$.

(1) The ring $T$ is coverable because it is noncommutative. For the covering number of $T$, we use induction on $\lambda$. If $\lambda=0$, then $T = M_n(q)$ and we are done. So, assume that $\lambda \ge 1$ and that the covering number of $T':=M_n(q) \oplus J'$ is $\sigma(M_n(q))$ whenever $J'$ is an ($M_n(q)$, $M_n(q)$)-bimodule of length $\lambda-1$. 

Consider $M:=S_i \oplus I$, where $I$ is a maximal $T$-subideal of $J_{ii}$. Note that $I$ has length $\lambda-1$, so $\sigma(M) = \sigma(M_n(q))$ and $|J_{ii}:I| = q^{n^2}$. On the one hand, $I$ has an ideal complement in $J_{ii}$, so $M$ occurs as a residue ring of $T$ and $\sigma(T) \le \sigma(M)$. On the other hand, by Lemmas \ref{lem:Peirce}(3) and \ref{lem:Si-Si}, the number of conjugates of $M$ in $T$ is
\begin{equation*}
|J_{ii}:(I+(J_{ii}\cap Z(T)))| = |J_{ii}:I| \cdot \frac{|I \cap Z(T)|}{|J_{ii} \cap Z(T)|} = q^{n^2} \cdot \frac{q^{\lambda-1}}{q^\lambda} = q^{n^2-1}.
\end{equation*}
By \cite[Lemma 3.4]{SwartzWernerI}, $\sigma(M) \le q^{n^2/2}$, so any minimal cover of $T$ excludes some conjugate of $M$. Hence, $\sigma(M) \le \sigma(T)$ by Lemma \ref{lem:SigmaElementary}(1).

(2) Suppose that $R$ is $\sigma$-elementary, but $J_{ii} \ne \{0\}$. Let $I$ be a maximal subideal of $J_{ii}$, let $\whJ_{ii}$ be an ideal complement for $J_{ii}$ in $J$, and let $\whS_i = \bigoplus_{k \ne i} S_k$. Then, $S_i \oplus I$ is maximal in $T$, so
\begin{equation*}
M:= (S_i \oplus \whS_i) \oplus (I \oplus \whJ_{ii})
\end{equation*}
is a maximal subring of $R$. As noted earlier, the number of conjugates of $S_i \oplus I$ in $T$ is $q^{n^2-1}$. So, $R$ contains $q^{n^2-1}$ such maximal subrings $M$. Since 
\begin{equation*}
\sigma(R) < \sigma(S_i) = \sigma(M_n(q)) < q^{n^2-1},
\end{equation*}
every minimal cover of $R$ excludes at least one $M$. By Lemma \ref{lem:SigmaElementary}(1), $\sigma(M) \le \sigma(R)$. But, $M$ occurs as a residue ring of $R$, so $\sigma(R) < \sigma(M)$, a contradiction.
\end{proof}

Next, we consider $J_{kk}$ when $S_k$ is a field. We will prove that for $R$ to be $\sigma$-elementary in this case, it is necessary that $\lambda_{kk} \ge 2$; and, if $J_{ij} \ne \{0\}$ for some $i \ne j$, then $J_{kk} = \{0\}$ whenever $S_k$ is isomorphic to $S_i$ or $S_j$.

\begin{lem}\label{lem:Jkk}
Let $1 \le k \le N$. Assume that $S_k$ is a field and that $J_{kk}$ is a simple ($S_k$, $S_k$)-bimodule. 
\begin{enumerate}[(1)]
\item $R$ contains a unique maximal subring that does not contain $J_{kk}$.
\item $\sigma(R) = \sigma(R/J_{kk})$ and $R$ is not $\sigma$-elementary.
\end{enumerate}
\end{lem}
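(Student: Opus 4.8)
The plan is to first determine all maximal subrings of $R$ that miss $J_{kk}$, and then to show that the unique such subring is redundant in every minimal cover.

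For part (1), I would start from the structural constraints. Writing $q=q_k$, simplicity of $J_{kk}$ gives $|J_{kk}|=q$; and since $S_k\cong\F_q$ is a field, a simple $(S_k,S_k)$-bimodule is one-dimensional over the compositum $\F_q$, on which the left and right actions coincide, so $J_{kk}\subseteq Z(R)$. A maximal subring $M$ with $J_{kk}\not\subseteq M$ cannot contain $J$, so by Lemma \ref{lem:MaxSubringClassification}(2) it has the form $M=S'\oplus(M\cap J)$ with $S'\in\msS(R)$ and $M\cap J$ a maximal subideal of $J$; since $J_{kk}$ is simple and is not contained in $M\cap J$, maximality forces $J=J_{kk}\oplus(M\cap J)$, so $M\cap J$ is an ideal complement of $J_{kk}$. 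I would then show this complement is unique: any simple $R$-subbimodule of $J$ isomorphic to $J_{kk}$ is fixed on both sides by $e_k$ and therefore lies in $e_kJe_k=J_{kk}$, so $J_{kk}$ is the only summand in its isomorphism class, whence $\mathrm{Hom}_R(\widehat{J_{kk}},J_{kk})=0$ forces uniqueness of the complement $\widehat{J_{kk}}$. Finally, Lemma \ref{lem:Peirce}(3) says the conjugates of $S\oplus\widehat{J_{kk}}$ number $|J:(\widehat{J_{kk}}+Z)|$ with $Z=J\cap Z(R)$; centrality of $J_{kk}$ gives $\widehat{J_{kk}}+Z\supseteq\widehat{J_{kk}}+J_{kk}=J$, so there is a single conjugate. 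Hence $M_0:=S\oplus\widehat{J_{kk}}$ is the unique maximal subring not containing $J_{kk}$.

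For part (2), note that $\sigma(R)\le\sigma(R/J_{kk})$ always holds (Lemma \ref{lem:basics}(3)), so it suffices to prove the reverse, and the key point is that $M_0$ never helps. The geometric input is that $M_0$ is a set-theoretic complement of $J_{kk}$: from $M_0\cap J_{kk}=\{0\}$ and $M_0+J_{kk}=R$ the projection $\pi\colon R\to R/J_{kk}$ restricts to a bijection $M_0\to R/J_{kk}$, so $M_0$ meets each coset $r+J_{kk}$ in exactly one of its $q$ points. I would take a minimal cover $\mcC$, which by Lemma \ref{lem:basics}(4) may be assumed to consist of maximal subrings. If $M_0\in\mcC$, then by part (1) every other member of $\mcC$ contains $J_{kk}$ and hence is a union of $J_{kk}$-cosets; since $q\ge 2$, any coset met by no member of $\mcC\setminus\{M_0\}$ would retain $q-1\ge 1$ uncovered points, so $\mcC\setminus\{M_0\}$ must cover all cosets. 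Then $\{\pi(M):M\in\mcC\setminus\{M_0\}\}$ covers $R/J_{kk}$ using only $|\mcC|-1=\sigma(R)-1$ subrings, contradicting $\sigma(R)\le\sigma(R/J_{kk})$. Thus $M_0\notin\mcC$, every member of $\mcC$ contains $J_{kk}$, and projecting gives a cover of $R/J_{kk}$ of size at most $\sigma(R)$; hence $\sigma(R/J_{kk})\le\sigma(R)$ and equality holds. As $J_{kk}\ne\{0\}$, the equality $\sigma(R)=\sigma(R/J_{kk})$ shows $R$ is not $\sigma$-elementary.

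The main obstacle I anticipate is part (1): both the uniqueness of the ideal complement $\widehat{J_{kk}}$ and, more importantly, the collapse of all conjugates of $S\oplus\widehat{J_{kk}}$ to a single subring, which rests squarely on $J_{kk}$ being central. The argument in part (2) is then mostly careful coset bookkeeping, the one subtle step being the passage from \emph{``$M_0$ meets each coset exactly once''} to \emph{``$M_0$ cannot reduce a minimal cover below $\sigma(R/J_{kk})$.''}
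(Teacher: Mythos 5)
Your proof is correct and matches the paper's own argument in all essentials: both establish that $J_{kk}$ is central in $R$ (the paper phrases your ``left and right actions coincide'' as ``$S_k \oplus J_{kk}$ is commutative,'' with the same level of justification), both use Lemma \ref{lem:MaxSubringClassification} to put a maximal subring missing $J_{kk}$ in the form $S' \oplus I$ and force $I$ to be the Peirce complement $\whJ_{kk}$ (the paper does this by decomposing $I$ as $\bigoplus e_i I e_j$ rather than by your isotypic-component/Hom argument, but the substance is identical), and both use centrality of $J_{kk}$ to collapse all conjugates to the single subring $M_0 = S \oplus \whJ_{kk}$. For part (2) the paper argues marginally more directly---it fixes a nonzero $x \in J_{kk}$, observes that $M_0 + x$ is disjoint from $M_0$, and concludes that the cover members containing $J_{kk}$ already project to a cover of $R/J_{kk}$, so that $\sigma(R/J_{kk}) \le \sigma(R)$ follows without splitting into cases on whether $M_0$ lies in the minimal cover---but this is the same coset bookkeeping as your argument.
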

\begin{proof}
(1) First, we note that $J_{kk} \subseteq Z(R)$. Indeed, suppose that $x \in J_{kk}$ and $s \in S$. Write $s = \sum_{i=1}^N s_i $, where each $s_i \in S_i$. Then, $sx = s_kx$. But, $S_k \oplus J_{kk}$ is commutative, so $s_k x = x s_k = xs$.

Next, let $\whJ_{kk}$ be an ideal complement for $J_{kk}$ in $J$, and let $T = S \oplus J_{kk}$. Then, $T$ is a maximal subring of $R$ because $J_{kk}$ is simple. Let $T'$ be any maximal subring of $R$ that does not contain $J_{kk}$. By Lemma \ref{lem:MaxSubringClassification}, $T' = S^{1+y} \oplus I$ for some $y \in J$ and some maximal subideal $I$ of $J$.

We claim that $I = \whJ_{kk}$. To see this, apply the Peirce decomposition to $I$ to get
\begin{equation*}
I = (e_k I e_k) \oplus \Big( \bigoplus_{(i,j) \ne (k,k)} e_i I e_j\Big) = (I \cap J_{kk}) \oplus (I \cap \whJ_{kk}).
\end{equation*}
Since $J_{kk}$ is simple and $J_{kk} \not\subseteq I$, $I \cap J_{kk} = \{0\}$. Then, by maximality, $I = \whJ_{kk}$.

Now, let $\alpha \in J_{kk}$ and $\beta \in \whJ_{kk}$ be such that $y=\alpha + \beta$. Since $\alpha$ is central, conjugation by $1+\alpha$ fixes elements of $S$ pointwise. Thus,
\begin{equation*}
S^{1+y} = (S^{1+\alpha})^{1+\beta} = S^{1+\beta} \subseteq S \oplus \whJ_{kk}
\end{equation*}
and so $T' = T$.

(2) As in part (1), let $\whJ_{kk}$ be the ideal complement for $J_{kk}$ in $J$ and let $T:=S \oplus \whJ_{kk}$ be the unique maximal subring of $R$ that does not contain $J_{kk}$. Let $\mcC$ be a minimal cover of $R$, and let $A_1, \ldots, A_m$ be all the maximal subrings in $\mcC$ that contain $J_{kk}$.

Note that $R = T \oplus J_{kk}$. Let $a \in T$, let $x \in J_{kk} \setminus \{0\}$, and consider $a+x$. Since $S_k \subseteq T$ and $J_{kk}$ is simple, if $x \in T$ then $S_k \cdot x = J_{kk} \subseteq T$, a contradiction. So, $x \notin T$, and hence $a+x \notin T$ as well. Consequently, $a+x \in \bigcup_{\ell=1}^m A_\ell$. This holds for all $a \in T$, so $T+x \subseteq \bigcup_{\ell=1}^m A_\ell$. Recalling that $J_{kk} \subseteq A_\ell$ for each $\ell$, we see that $\{A_1/J_{kk}, \ldots, A_m/J_{kk}\}$ is a cover of $R/J_{kk} \cong T$. Thus, $\sigma(R/J_{kk}) \le m \le \sigma(R)$, and so in fact $\sigma(R) = \sigma(R/J_{kk})$.
\end{proof}

\begin{Rem}
\label{rem:mutmut2}
 We note that Lemma \ref{lem:Jkk} holds mutatis mutandis for unital maximal subrings and unital coverings of $R$. As with Theorem \ref{thm:Jij=0}, this will become relevant in Section \ref{sect:othermain}.
\end{Rem}

\begin{prop}\label{prop:Jkk}
Let $1 \le i, j \le N$ such that $i \ne j$. Assume that $J_{ij}$ is a simple ($S_i$, $S_j$)-bimodule, $S_j$ is a field, and $R$ is $\sigma$-elementary. Then, for all $1 \le k \le N$, if $S_k \cong S_j$, then $J_{kk} = \{0\}$.
\end{prop}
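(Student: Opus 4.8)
The plan is to assume $J_{kk}\ne\{0\}$ and force a contradiction with $\sigma$-elementarity. Write $q:=q_j=q_k$ (so $S_j\cong S_k\cong\F_q$) and $q_{ij}:=q_i\otimes q_j$; since $\F_{q_j}\subseteq\F_{q_{ij}}$ we have $q_{ij}\ge q$ and hence $q_{ij}^{\,n_i}\ge q^{n_i}$. Two residue rings drive everything. By Lemma \ref{lem:Peirce}(4), $A:=S_i\oplus S_j\oplus J_{ij}\cong A(n_i,q_i,q_j)$ and $B:=S_k\oplus J_{kk}$ both occur as residue rings of $R$, and each is a \emph{proper} quotient: the kernel producing $A$ contains $J_{kk}\ne\{0\}$, and the kernel producing $B$ contains $S_i\ne\{0\}$. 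Thus $\sigma$-elementarity gives $\sigma(R)<\sigma(A)$ and $\sigma(R)<\sigma(B)$. The crucial lower bound comes from the hypothesis that $J_{ij}$ is a simple $(S_i,S_j)$-bimodule with $n_j=1$: Lemma \ref{lem:|J|lowerbound} yields $\sigma(R)\ge q_{ij}^{\,n_i}+1$. The whole strategy is to play this lower bound against an upper bound for $\sigma$ of whichever of $A$ or $B$ happens to be small.

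The cases $n_i\le 2$ close immediately using $A$. When $n_i=1$, Theorem \ref{thm:smalln}(1) gives $\sigma(A)\le q_{ij}+1=q_{ij}^{\,n_i}+1$, whence $\sigma(R)<\sigma(A)\le q_{ij}^{\,n_i}+1\le\sigma(R)$, a contradiction. When $n_i=2$, Theorem \ref{thm:smalln}(2) shows $A$ is not $\sigma$-elementary and $\sigma(A)=\sigma(M_2(q_i))$; since $a=2$ gives $d\ge n_i-(n_i/a)=1$, Lemma \ref{lem:bounds} bounds $\sigma(M_2(q_i))\le q_i^{2d}=q_{ij}^{\,2}=q_{ij}^{\,n_i}$, so $\sigma(R)<\sigma(A)\le q_{ij}^{\,n_i}<q_{ij}^{\,n_i}+1\le\sigma(R)$, again a contradiction. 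Neither of these sub-arguments uses the size of $\lambda_{kk}$.

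The main obstacle is the case $n_i\ge 3$, where $A$ may itself be $\sigma$-elementary and $\sigma(A)=q_{ij}^{\,n_i}+\binom{n_i}{d}_{q_i}+\omega(d)$ (Corollary \ref{cor:AGLn>1}) \emph{exceeds} the lower bound; thus the $A$-side comparison fails and I must instead exploit $B$. First, if $\lambda_{kk}=1$ then $J_{kk}$ is simple and Lemma \ref{lem:Jkk} already gives $\sigma(R)=\sigma(R/J_{kk})$, contradicting $\sigma$-elementarity; so I may assume $\lambda_{kk}\ge 2$. Choosing a length-two subbimodule $X_2\subseteq J_{kk}$ and killing a bimodule complement (Lemma \ref{lem:Peirce}) produces a proper residue ring $B':=S_k\oplus X_2=\F_q\oplus X_2$ of $R$ with $|X_2|=q^2$. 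Running the covering construction of Proposition \ref{prop:AGLcover}(1) on the radical-square-zero extension $B'$ — whose semisimple conjugates number at most $|X_2|=q^2$ and whose centralizer subrings $C_{S_k}(x)\oplus X_2$ number at most $(|X_2|-1)/(q-1)=q+1$ — bounds $\sigma(B')\le q^2+q+1$. Since $n_i\ge 3$ forces $q_{ij}^{\,n_i}\ge q^3$ and $q^2+q+1<q^3+1$ for every $q\ge 2$, I conclude
\[
\sigma(R)<\sigma(B')\le q^2+q+1<q^3+1\le q_{ij}^{\,n_i}+1\le\sigma(R),
\]
the desired contradiction. Hence $J_{kk}=\{0\}$. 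The only genuinely delicate point is the covering-number estimate for $B'$: one must verify that the conjugate-plus-centralizer cover of Proposition \ref{prop:AGLcover}(1) and Lemma \ref{lem:aglupperbound} carries over verbatim to a \emph{diagonal} block $\F_q\oplus X_2$ (it does, as the construction uses only that the radical squares to zero and $S_k$ is a semisimple complement), so that no control over the Frobenius twists appearing in $X_2$ is needed.
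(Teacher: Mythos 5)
Your lower-bound side is exactly the paper's (Lemma \ref{lem:|J|lowerbound}), and your cases $n_i\le 2$ are correct, via a route (upper bounds on $\sigma(A)$ from Theorem \ref{thm:smalln} and Lemma \ref{lem:bounds}) that the paper does not need. The genuine gap is in the case $n_i\ge 3$, at precisely the step you flag as delicate: the estimate $\sigma(B')\le q^2+q+1$ for $B'=\F_q\oplus X_2$. The conjugates-plus-centralizers construction of Proposition \ref{prop:AGLcover}(1) and Lemma \ref{lem:aglupperbound} does \emph{not} use only that the radical squares to zero and that $S_k$ is a semisimple complement; it also needs that no nonzero element of the radical is central (Lemma \ref{lem:Jcenter}, Lemma \ref{lem:JcapZ}(1)), for otherwise the subrings $C_S(x)\oplus J$ are not proper and conjugation does not move $S$. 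That hypothesis is special to off-diagonal blocks $J_{ij}$ with $i\ne j$, where it comes from the orthogonality of $e_i$ and $e_j$; it fails for diagonal blocks. Concretely, when $X_2$ carries the symmetric $(\F_q,\F_q)$-bimodule structure --- which is the main case, and the only one consistent with the paper's observation that $S_k\oplus J_{kk}$ is commutative, so that $B'\cong\F_q(+)\F_q^2$ --- the entire radical is central: $S_k^{1+x}=S_k$ for every $x\in X_2$ (one conjugate, not $q^2$), and $C_{S_k}(x)=S_k$ for every $x$, so every proposed subring $C_{S_k}(x)\oplus X_2$ equals $B'$. Your ``cover'' degenerates to $\{S_k,B'\}$, which is not a cover by proper subrings, so the bound is unproven exactly where you need it. (Mixed cases fail too: if $X_2=X^{(0)}\oplus X^{(t)}$ with one symmetric summand, all conjugates of $S_k$ lie in $S_k\oplus X^{(t)}$ and the only proper centralizer subring is $\F_{p^{\gcd(t,d)}}\oplus X_2$, leaving most elements with nonzero $X^{(0)}$-component uncovered.) Your closing parenthetical is therefore backwards: the presence or absence of Frobenius twists, i.e.\ of central components in $X_2$, is exactly what determines whether the construction works.

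The inequality you want is true --- in fact $\sigma(B')\le q+1$ --- but it must be justified differently, and this is where the paper's proof diverges from yours: having obtained $\lambda_{kk}\ge 2$ from Lemma \ref{lem:Jkk} (as you do), the paper notes that the commutative ring $S_k\oplus J_{kk}$ has $\F_{q_j}(+)\F_{q_j}^2$ as a residue ring and quotes its known covering number $q_j+1$ (Theorem \ref{thm:main}(2), from \cite{SwartzWerner}), giving $\sigma(R)<q_j+1\le q_{ij}^{\,n_i}+1\le\sigma(R)$ uniformly in $n_i$, with no case split at all. To repair your argument you would have to do the same: handle the symmetric part of $X_2$ by citing the known value $\sigma(\F_q(+)\F_q^2)=q+1$, reserving your centralizer construction for summands on which no nonzero element is central (where it is valid and even yields $q+1$). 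As written, the $n_i\ge 3$ case does not go through.
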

\begin{proof}
Assume $1 \le k \le N$ is such that $S_k \cong S_j$, and suppose that $J_{kk} \ne \{0\}$. Since $R$ is $\sigma$-elementary, $J_{kk}$ cannot be simple by Lemma \ref{lem:Jkk}. So, $\lambda_{kk} \ge 2$. 

Now, $S_k \oplus J_{kk}$ is a commutative ring. Since $\lambda_{kk} \ge 2$ and $S_k \cong \F_{q_j}$, the ring $S_k \oplus J_{kk}$ has $\F_{q_j} (+) \F_{q_j}^2$ as a residue ring. By \cite[Theorem 4.8(2)]{SwartzWerner},
\begin{equation*}
\sigma(S_k \oplus J_{kk}) \le \sigma(\F_{q_j} (+) \F_{q_j}^2) = q_j+1.
\end{equation*}
Moreover, $S_k \oplus J_{kk}$ occurs as a residue ring of $R$ by Lemma \ref{lem:Peirce}(4), so $\sigma(R) < q_j+1$. However, letting $q=q_i \otimes q_j$ and applying Lemma \ref{lem:|J|lowerbound} yields
\begin{equation*}
q_j +1 \le q^{n_i} + 1 \le \sigma(R),
\end{equation*}
which is a contradiction. Thus, $J_{kk} = \{0\}$.
\end{proof}

Based on our work so far, it remains possible that $J_{ij} \ne \{0\}$ and $J_{kk} \ne \{0\}$ for some field $S_k$ that is isomorphic to neither $S_i$ nor $S_j$. Ultimately, this case is ruled out by our classification of $\sigma$-elementary rings in Theorem \ref{thm:main}. Rather than attack this case directly, however, we merely note that $S_k \oplus J_{kk}$ is always a commutative ring. The conclusion that $J_{kk}=\{0\}$ when $J_{ij} \ne \{0\}$ will be reached later by referencing the classification of commutative $\sigma$-elementary rings from \cite{SwartzWerner}.

\section{Restrictions on direct summands of \texorpdfstring{$S$}{S}}\label{sect:summands}

Continue to use the notations of Sections \ref{sect:Peirce} and \ref{sect:J}. At this point, we know that in a $\sigma$-elementary ring, the radical has a restricted structure. There is at most one pair $(i, j)$ with $i \ne j$ and $J_{ij} \ne \{0\}$, and if $J_{kk} \ne \{0\}$, then $S_k$ must be a field not isomorphic to $S_i$ or $S_j$. We can also place restrictions on the direct summands of $S$. In Proposition \ref{prop:onecopy}, we show that if $S_i$ is noncommutative, then $S_j \not\cong S_i$ for all $j \ne i$. A closely related result holds for a field $S_i$ (Proposition \ref{prop:fieldcopies}): if $J_{ij} \ne \{0\}$ with $i \ne j$ and $S_i$ is a field, then $S_k \not\cong S_i$ for all $k \ne i, j$.

We begin by recalling some results from \cite{PeruginelliWerner}. 

\begin{lem}\label{lem:linked}
Let $A = M_n(q)$.
\begin{enumerate}[(1)]
\item \cite[Proposition 4.2]{PeruginelliWerner} Let $\phi$ be an $\F_p$-automorphism of $A$. Then, $M:=\{a + \phi(a) : a \in A\}$ is a maximal subring of $A \oplus A$, and $M \cong A$.
\item \cite[Lemma 5.5]{PeruginelliWerner} If $n \ge 2$, then the number of $\F_p$-automorphisms of $A$ is greater than $\sigma(A)$.
\end{enumerate}
\end{lem}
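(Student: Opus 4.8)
The plan is to treat the two parts separately, since (1) is a structural fact about the ``twisted diagonal'' and (2) is a counting-plus-estimation argument.

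For part (1), I would first observe that $M = \{(a,\phi(a)) : a \in A\}$ is the graph of the ring homomorphism $\phi$, hence is closed under addition and multiplication (because $\phi$ preserves both), and the map $a \mapsto (a,\phi(a))$ is a ring isomorphism $A \to M$; in particular $M \cong A$ and $|M| = |A| < |A \oplus A|$, so $M$ is a proper subring. To prove maximality, I would take a subring $B$ with $M \subsetneq B \subseteq A \oplus A$ and pick $(c,d) \in B \setminus M$. Subtracting $(c,\phi(c)) \in M$ gives $(0,e) \in B$ with $e := d - \phi(c) \ne 0$. Multiplying $(0,e)$ on either side by the elements $(a,\phi(a)) \in M$, and using that $\phi$ is surjective, produces all of $\{0\} \oplus (AeA)$ inside $B$; since $A = M_n(q)$ is simple and $e \ne 0$, we have $AeA = A$, so $\{0\} \oplus A \subseteq B$. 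As $M$ already surjects onto the first coordinate, adding $\{0\} \oplus A$ forces $B = A \oplus A$, so $M$ is maximal.

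For part (2), the first step is to count ${\rm Aut}_{\F_p}(A)$ for $A = M_n(q)$ with $q = p^f$. Any $\F_p$-algebra automorphism preserves the center $\F_q$ of $A$ and so induces an element of ${\rm Gal}(\F_q/\F_p)$, giving a homomorphism ${\rm Aut}_{\F_p}(A) \to {\rm Gal}(\F_q/\F_p)$. This map is surjective (the entrywise Frobenius realizes the generator), and its kernel is the group of $\F_q$-algebra automorphisms, which by the Skolem--Noether theorem is exactly the inner automorphism group ${\rm PGL}(n,q)$, of order $|\GL(n,q)|/(q-1)$. Hence $|{\rm Aut}_{\F_p}(A)| = f \cdot |\GL(n,q)|/(q-1) \ge |\GL(n,q)|/(q-1)$. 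The remaining step is the estimate: writing $|\GL(n,q)| = q^{n(n-1)/2}\prod_{j=1}^n(q^j-1)$ and cancelling $q-1$,
\[ \frac{|\GL(n,q)|}{q-1} = q^{n(n-1)/2}\prod_{j=2}^n(q^j-1) > q^{n(n-1)/2} \cdot q^{n(n-1)/2} = q^{n(n-1)},\]
where the strict inequality uses $q^j - 1 > q^{j-1}$ together with $n \ge 2$. On the other side, Lemma \ref{lem:bounds} with $d = n - (n/a)$ gives $\sigma(M_n(q)) \le q^{n(n-n/a)}$, and since $a \le n$ we have $n - n/a \le n-1$, so $q^{n(n-n/a)} \le q^{n(n-1)}$. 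Chaining these,
\[ \sigma(M_n(q)) \le q^{n(n-n/a)} \le q^{n(n-1)} < \frac{|\GL(n,q)|}{q-1} \le |{\rm Aut}_{\F_p}(A)|,\]
which is the desired inequality.

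The parts that require the most care are in (2): correctly identifying ${\rm Aut}_{\F_p}(M_n(q))$ as ${\rm PGL}(n,q) \rtimes {\rm Gal}(\F_q/\F_p)$ — in particular noting that the transpose map is only an anti-automorphism and so contributes nothing here — and keeping the final chain of inequalities strict. The pleasant feature, which I would emphasize, is that the strict inequality $|\GL(n,q)|/(q-1) > q^{n(n-1)}$ holds for every $n \ge 2$ and every $q$; this absorbs the boundary cases (prime $n$, where $n(n-1) = n(n-n/a)$, and the small case $(n,q) = (2,2)$) with no separate argument, even though for those the two exponents coincide and the bound from Lemma \ref{lem:bounds} is itself an equality.
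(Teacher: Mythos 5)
Your proof is correct, but note that the paper does not actually prove this lemma: both parts are quoted verbatim from \cite[Proposition 4.2]{PeruginelliWerner} and \cite[Lemma 5.5]{PeruginelliWerner}, so what you have written is a self-contained replacement for those citations rather than a parallel to an argument in the text. Your part (1) --- the graph of $\phi$ is a subring isomorphic to $A$, and any subring $B$ strictly containing it contains some $(0,e)$ with $e \neq 0$, hence $\{0\} \oplus AeA = \{0\}\oplus A$ by simplicity of $M_n(q)$, hence $B = A \oplus A$ --- is the standard argument and is airtight; it uses only closure under addition and multiplication, so it is valid for the paper's non-unital notion of subring. Your part (2) is also sound: the identification $|{\rm Aut}_{\F_p}(M_n(q))| = f\,|\GL(n,q)|/(q-1)$ via restriction to the center (surjective onto ${\rm Gal}(\F_q/\F_p)$ by entrywise Frobenius, kernel ${\rm PGL}(n,q)$ by Skolem--Noether) is right, and the chain $\sigma(M_n(q)) \le q^{n(n-n/a)} \le q^{n(n-1)} < |\GL(n,q)|/(q-1)$ is both valid and non-circular, since Lemma \ref{lem:bounds} is established in Section \ref{sect:AGL} (before Lemma \ref{lem:linked} is ever used) and its proof rests on the formula of Theorem \ref{thm:main}(3), which is imported from prior work rather than derived from anything downstream. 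Your bookkeeping point is exactly right: the strictness is carried entirely by $\prod_{j=2}^{n}(q^j-1) > q^{n(n-1)/2}$, so the equality cases of Lemma \ref{lem:bounds} (prime $n$, and $(n,q,d)=(2,2,1)$) need no separate treatment. What your route buys is self-containedness within this paper; what it costs is dependence on the exact covering-number formula for $M_n(q)$ through Lemma \ref{lem:bounds} --- a dependence the cited source cannot avoid either, so nothing essential is lost.
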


\begin{lem}\label{lem:isolated}
Let $1 \le i \le N$. Assume that $J_{ij}=J_{ji}=\{0\}$ for all $j \ne i$. Then, $S_i \oplus J_{ii}$ is a two-sided ideal of $R$.
\end{lem}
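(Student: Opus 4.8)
The plan is to identify the candidate ideal with the corner ring $e_i R e_i$ and then show that the hypothesis forces the off-diagonal corners (with respect to the single idempotent $e_i$) to vanish, so that $R$ decomposes as a ring direct product. Concretely, set $f = 1 - e_i$ and use the two-sided Peirce decomposition $R = e_i R e_i \oplus e_i R f \oplus f R e_i \oplus f R f$; I will argue that $e_i R f = f R e_i = \{0\}$, whence $e_i R e_i$ is a two-sided ideal.

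First I would verify that $S_i \oplus J_{ii} = e_i R e_i$. Writing $R = S \oplus J$ with $S = \bigoplus_k S_k$, orthogonality of the idempotents gives $e_i S_k e_i = \{0\}$ for $k \ne i$ and $e_i S_i e_i = S_i$ (since $e_i = 1_{S_i}$), so $e_i S e_i = S_i$; and $e_i J e_i = J_{ii}$ by definition. Thus $e_i R e_i = S_i \oplus J_{ii}$.

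Next comes the heart of the argument: $e_i R f = \{0\}$ and $f R e_i = \{0\}$. Since $f = \sum_{l \ne i} e_l$, it suffices to show $e_i R e_l = \{0\} = e_l R e_i$ for each $l \ne i$. Splitting $e_i R e_l = e_i S e_l \oplus e_i J e_l$, the summand $e_i S e_l$ vanishes because no $S_k$ survives multiplication by $e_i$ on the left and $e_l$ on the right (that would require $k = i$ and $k = l$ simultaneously), while $e_i J e_l = J_{il} = \{0\}$ by hypothesis. The same reasoning, now using $J_{li} = \{0\}$, gives $e_l R e_i = \{0\}$.

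Finally, with both cross-corners trivial, every $r \in R$ satisfies $r = (e_i + f) r (e_i + f) = e_i r e_i + f r f$, so $R = e_i R e_i \oplus f R f$ with $e_i R e_i \cdot f R f \subseteq e_i R f = \{0\}$ and $f R f \cdot e_i R e_i \subseteq f R e_i = \{0\}$. Hence this is a ring direct product, and $e_i R e_i = S_i \oplus J_{ii}$ is a two-sided ideal of $R$. I do not anticipate a genuine obstacle here; the only point requiring care is to invoke the hypothesis in exactly the right place—to kill $J_{il}$ and $J_{li}$ for $l \ne i$—while noting that the bimodules $J_{kl}$ with $k, l \ne i$ are permitted to be nonzero and are harmlessly absorbed into $f R f$.
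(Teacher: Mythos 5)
Your proof is correct and takes essentially the same approach as the paper: the paper writes $R = R_1 \oplus R_2$ with $R_1 = S_i \oplus J_{ii}$ and $R_2 = \bigl(\bigoplus_{j \ne i} S_j\bigr) \oplus \bigl(\bigoplus_{j \ne i,\, k \ne i} J_{jk}\bigr)$ and checks that $r_1 r_2 = 0 = r_2 r_1$, and your corner $fRf$ with $f = 1 - e_i$ is exactly this $R_2$. Packaging the argument via the Peirce corners of the single idempotent $e_i$ rather than writing the complement explicitly is only a cosmetic difference.
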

\begin{proof}
Let $R_1 := S_i \oplus J_{ii}$ and 
\begin{equation*}
R_2 := \Big(\bigoplus_{j \ne i} S_j \Big) \oplus \Big(\bigoplus_{(j,k) \ne (i,i)} J_{jk}\Big),
\end{equation*}
so that $R = R_1 \oplus R_2$. By assumption, $J_{ij} = \{0\} = J_{ji}$ for all $j \ne i$. So, 
\begin{equation*}
R_2 = \Big(\bigoplus_{j \ne i} S_j \Big) \oplus \Big(\bigoplus_{j \ne i, k \ne i} J_{jk}\Big).
\end{equation*}
This means that $r_1 r_2 = 0 = r_2 r_1$ for all $r_1 \in R_1$ and $r_2 \in R_2$. It follows that $R_1$ is a two-sided ideal of $R$.
\end{proof}

\begin{prop}\label{prop:onecopy}
Let $1 \le i, j \le N$ such that $n_i \ge 2$. If $R$ is $\sigma$-elementary, then $S_i \not\cong S_j$ for all $j \ne i$.
\end{prop}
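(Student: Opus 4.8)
The plan is to argue by contradiction: assume $S_j \cong S_i$ for some $j \ne i$, and show that every minimal cover of $R$ must contain strictly more maximal subrings than $\sigma(R)$ permits. Write $A := M_{n_i}(q_i)$, so $S_i \cong S_j \cong A$ with $n_i \ge 2$; in particular $A$ is noncommutative. First I would assemble the structural constraints already in hand. Since $n_i, n_j \ge 2$, Proposition \ref{prop:Jii=0} gives $J_{ii} = J_{jj} = \{0\}$, and Theorem \ref{thm:Jij=0}(3) forces $J_{ij} = J_{ji} = \{0\}$ (neither summand is a field). Moreover, by Theorem \ref{thm:Jij=0}(1) at most one off-diagonal block $J_{k\ell}$ ($k \ne \ell$) is nonzero, and such a block cannot be $J_{ij}$ or $J_{ji}$, so its index set meets $\{i,j\}$ in at most one element. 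Hence at least one of $S_i, S_j$ satisfies $J_{\bullet k} = J_{k \bullet} = \{0\}$ for all $k \ne \bullet$; after possibly interchanging the labels $i$ and $j$ (harmless since $S_i \cong S_j$), assume it is $S_j$. Together with $J_{jj} = \{0\}$, Lemma \ref{lem:isolated} then shows $S_j$ is a two-sided ideal of $R$.

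The heart of the argument is to manufacture many \emph{forced} maximal subrings out of the isomorphism $S_i \cong S_j$. For each ring isomorphism $\psi \colon S_i \to S_j$, set $D_\psi := \{a + \psi(a) : a \in S_i\}$, which by Lemma \ref{lem:linked}(1) is a maximal subring of $S_i \oplus S_j$ isomorphic to $A$. Writing $\widehat{S} := \bigoplus_{k \ne i, j} S_k$, the subring $M_\psi := D_\psi \oplus \widehat{S} \oplus J$ contains $J$ and is the inverse image of the maximal subring $D_\psi \oplus \widehat{S}$ of $R/J \cong S$, hence is maximal in $R$ by Lemma \ref{lem:MaxSubringClassification}(1). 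A direct check gives $S_i \oplus S_j = D_\psi \oplus S_j$, so $R = M_\psi \oplus S_j$, a direct sum of $M_\psi$ with the two-sided ideal $S_j$. Since $R$ is $\sigma$-elementary and $S_j \ne \{0\}$, we have $\sigma(R) < \sigma(R/S_j)$, and so Lemma \ref{lem:SigmaElementary}(2) forces every $M_\psi$ into every minimal cover of $R$.

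To finish, I would count. Distinct isomorphisms $\psi$ give distinct graphs $D_\psi$, hence distinct subrings $M_\psi$; the number of such $\psi$ equals the number of $\F_p$-automorphisms of $A$, which by Lemma \ref{lem:linked}(2) exceeds $\sigma(A)$ because $n_i \ge 2$. Therefore any minimal cover of $R$ has at least $|{\rm Aut}_{\F_p}(A)| > \sigma(A)$ members, i.e. $\sigma(R) > \sigma(A)$. On the other hand, because $J_{ii} = \{0\}$, Lemma \ref{lem:Peirce}(4) exhibits a residue ring of $R$ isomorphic to $S_i \cong A$, whence $\sigma(R) \le \sigma(A)$. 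These two inequalities are contradictory, so no such $j$ can exist.

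I expect the main obstacle to be securing a genuine \emph{ideal} complement so that the forcing Lemma \ref{lem:SigmaElementary}(2) applies: the subrings $M_\psi$ all contain $J$, so Proposition \ref{prop:idealcomplements} is unavailable, and one truly needs the splitting $R = M_\psi \oplus S_j$ with $S_j$ a two-sided ideal. This is precisely why the preliminary structural reduction — invoking the uniqueness of the nonzero off-diagonal block (Theorem \ref{thm:Jij=0}(1)) to isolate one of $S_i, S_j$ as a ring-theoretic direct factor — is indispensable, and the symmetry $S_i \cong S_j$ is what lets me relabel to whichever summand is isolated while still producing $|{\rm Aut}_{\F_p}(A)|$ distinct forced subrings.
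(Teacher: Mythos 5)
Your proof is correct, and it follows the paper's proof almost step for step through the structural reduction (Theorem \ref{thm:Jij=0}, Proposition \ref{prop:Jii=0}, Lemma \ref{lem:isolated} to make $S_j$ a two-sided ideal) and the construction and counting of the graph subrings $M_\psi$ via Lemma \ref{lem:linked}. The one genuine difference is the finishing move: the paper never verifies the splitting $R = M_\psi \oplus S_j$, but instead observes that $T \cong R/S_j$, notes that the number $m$ of maximal subrings isomorphic to $T$ exceeds $\sigma(S_i) > \sigma(R)$, concludes that some such subring $T'$ is \emph{omitted} from any minimal cover, and then invokes Lemma \ref{lem:SigmaElementary}(1) to get $\sigma(T) = \sigma(T') \le \sigma(R)$, contradicting $\sigma$-elementarity. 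You argue dually: you check the decomposition $R = M_\psi \oplus S_j$ explicitly, so that Lemma \ref{lem:SigmaElementary}(2) \emph{forces} every $M_\psi$ into every minimal cover, giving $\sigma(R) \ge |{\rm Aut}_{\F_p}(A)| > \sigma(S_i)$, against $\sigma(R) < \sigma(S_i)$ coming from the residue ring $S_i$. Both mechanisms rest on the same two numerical inputs (Lemma \ref{lem:linked}(2) and the residue ring $S_i$); your version buys a slightly more transparent contradiction and avoids having to argue that the omitted subring $T'$ has the same covering number as $T$, at the modest cost of verifying that $D_\psi$ is an additive complement to $S_j$ in $S_i \oplus S_j$ -- a check you carry out correctly.
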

\begin{proof}
Assume that $R$ is $\sigma$-elementary, but there exists $j \ne i$ such that $S_j \cong S_i$. By Theorem \ref{thm:Jij=0}, there is at most one pair $(k,\ell)$ with $k \ne \ell$ and $J_{k \ell} \ne \{0\}$, and we must have $J_{ij}=J_{ji}=\{0\}$ because $S_i$ and $S_j$ are noncommutative. Also, $J_{ii} = J_{jj} = \{0\}$ by Proposition \ref{prop:Jii=0}. It follows that either $J_{ik} = J_{ki}=\{0\}$ for all $1 \le k \le N$, or $J_{jk} = J_{kj}=\{0\}$ for all $1 \le k \le N$. Assume without loss of generality that the latter condition holds. Then, $S_j$ is a two-sided ideal of $R$ by Lemma \ref{lem:isolated}.

For any $\F_p$-algebra isomorphism $\phi: S_i \to S_j$, let $M_\phi$ be the maximal subring $M_\phi:= \{a + \phi(a) : a \in S_i\}$ of $S_i \oplus S_j$. Note that $M_{\phi_1} = M_{\phi_2}$ if and only if $\phi_1 = \phi_2$. Let
\begin{equation*}
T:= \Big(\bigoplus_{k \ne i, k \ne j} S_k\Big) \oplus M_\phi \oplus J,
\end{equation*}
which is maximal subring of $R$. Furthermore, since $M_\phi \cong S_i \cong S_j$, we have 
\begin{equation*}
T \cong \Big(\bigoplus_{k \ne j} S_k\Big) \oplus J \cong R/S_j,
\end{equation*}
and so $\sigma(R) < \sigma(R/S_j) = \sigma(T)$. 

Now, let $m$ be the number of maximal subrings of $R$ isomorphic to $T$. Then, $m$ is greater than or equal to the number of choices for $M_\phi$, which by Lemma \ref{lem:linked}(2) is greater than $\sigma(M_{n_i}(q_i)) = \sigma(S_i)$. Since $R$ has a residue ring isomorphic to $S_i$, we have $\sigma(R) < \sigma(S_i) < m$. Hence, any minimal cover of $R$ omits some maximal subring $T'$ isomorphic to $T$. Applying Lemma \ref{lem:SigmaElementary}(1), we have $\sigma(T) = \sigma(T') \le \sigma(R)$, which is a contradiction.
\end{proof}

Ruling out fields isomorphic to $S_j$ is more difficult. By \cite[Theorem 3.5]{Werner}, $T=\bigoplus_{\ell = 1}^t \F_q$ is coverable if and only if $t \ge \tau(q)$, where $\tau(q)$ is as defined in the introduction. When $T$ is not coverable, it is generated (as a ring) by a single element $\alpha = \sum_{\ell = 1}^t \alpha_\ell$. By \cite[Proposition 3.4]{Werner}, $\alpha$ generates $T$ if and only if each $\alpha_\ell$ is a primitive element for $\F_q$ over $\F_p$ (if $q=p$, then any nonzero element of $\F_p$ is primitive) and each $\alpha_\ell$ has a distinct minimal polynomial over $\F_p$.

We also require knowledge about the maximal subrings of $S$. These subrings were classified in \cite{PeruginelliWerner}.

\begin{Def}\label{def:pi1pi2}\cite[Definition 4.3]{PeruginelliWerner}
Let $S = \bigoplus_{i = 1}^N S_i$, where each $S_i$ is a finite simple ring of characteristic $p$. Let $M$ be a maximal subring of $S$. We say that $M$ is of \emph{Type $\Pi_1$} if there exists an index $i$ and a maximal subring $M_i$ of $R_i$ such that
\begin{equation*}
M = S_1 \oplus \cdots \oplus S_{i-1} \oplus M_i \oplus S_{i+1} \oplus \cdots \oplus S_N.
\end{equation*}
We say that $M$ is of \emph{Type $\Pi_2$} if there exist indices $i < j$ and an $\F_p$-algebra isomorphism $\phi: S_i \to S_j$ such that
\begin{equation*}
M = \Big\{\sum_{k=1}^N s_k \in S : s_k \in S_k \text{ for each } 1 \le k \le N \text{ and } s_j=\phi(s_i)\Big\};
\end{equation*}
here, there are no restrictions on the summands $s_k$ for $k \ne i, j$. In a Type $\Pi_2$ maximal subring, we say that $S_i$ and $S_j$ are \emph{linked} by $\phi$.
\end{Def}

\begin{lem}\label{lem:semisimplemax}\cite[Theorem 4.5]{PeruginelliWerner}
Any maximal subring of $S$ is Type $\Pi_1$ or Type $\Pi_2$.
\end{lem}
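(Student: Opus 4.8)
The plan is to analyze a maximal subring $M \le S = \bigoplus_{i=1}^N S_i$ through its interaction with the simple factors, using the coordinate projections $\pi_i : S \to S_i$ together with the fact that each $S_i$ is a two-sided ideal of $S$, and to induct on $N$. The base case $N=1$ is immediate, since $S$ is then simple and a maximal subring of $S$ is by definition a maximal subring of $S_1$, i.e.\ Type $\Pi_1$. For $N \ge 2$ the whole argument is organized around a single dichotomy: whether or not $M$ surjects onto every factor $S_i$.

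First I would dispose of the non-surjective case. Suppose $\pi_{i_0}(M) \subsetneq S_{i_0}$ and set $S'' = \bigoplus_{j \ne i_0} S_j$, a two-sided ideal of $S$. Then $M + S''$ is a subring, and since $\pi_{i_0}(M + S'') = \pi_{i_0}(M) \subsetneq S_{i_0}$ it is proper; as it contains $M$, maximality forces $M = M + S''$, so $S'' \subseteq M$ and hence $M = \pi_{i_0}(M) \oplus S''$. Maximality then makes $\pi_{i_0}(M)$ a maximal subring of $S_{i_0}$, so $M$ is Type $\Pi_1$. This uses no induction.

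Now suppose $M$ surjects onto every $S_i$. For each $i$, the set $M \cap S_i$ is a two-sided ideal of $\pi_i(M) = S_i$, hence is $\{0\}$ or $S_i$ by simplicity. If $S_i \subseteq M$ for some $i$, then writing $m = \pi_i(m) + (m - \pi_i(m))$ shows $M = S_i \oplus M'$ with $M' := M \cap \bigoplus_{j \ne i} S_j$ maximal in $\bigoplus_{j\ne i} S_j$; by the inductive hypothesis $M'$ is Type $\Pi_1$ or $\Pi_2$ there, and reattaching the full factor $S_i$ preserves the type. The remaining possibility is that $M$ surjects onto every $S_i$ but contains no $S_i$. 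Here Goursat's lemma, applied to each pairwise projection $\pi_{ij}(M) \le S_i \oplus S_j$, shows that this projection is either all of $S_i \oplus S_j$ or the graph of a ring isomorphism $S_i \xrightarrow{\sim} S_j$; declaring $i \sim j$ in the latter case gives an equivalence relation (transitivity comes from composing the linking isomorphisms) whose blocks consist of mutually isomorphic factors. A block of size $1$ would be a ``free'' coordinate, forcing $S_i \subseteq M$ against our hypothesis; a block of size $\ge 3$ yields a multi-way diagonal that can be strictly enlarged by dropping one link; and two or more blocks make $M$ a proper direct product that can be enlarged in one factor. Maximality therefore collapses everything to a single block of size exactly two, so $N = 2$ and $M = \{(s,\phi(s)) : s \in S_1\}$ for a ring isomorphism $\phi$. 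Since $S_1, S_2$ have characteristic $p$, any ring isomorphism fixes the prime field and is automatically an $\F_p$-algebra isomorphism, so $M$ is Type $\Pi_2$.

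I expect the main obstacle to be precisely this last case: passing from the purely pairwise Goursat data to the global block structure, and then invoking maximality to rule out blocks of size $\ge 3$ and multiple independent blocks. The delicate point is verifying that the only relations cutting $M$ out of $S$ are the pairwise links, so that a fully linked block of size $\ge 3$ genuinely admits a strictly larger proper overring obtained by loosening a single link; this is exactly where the simplicity of the $S_i$ and the assumption that no $S_i$ lies in $M$ are doing the real work.
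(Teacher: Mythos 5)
The paper itself offers no proof of this lemma---it is imported wholesale from \cite[Theorem 4.5]{PeruginelliWerner}---so your attempt is a self-contained argument where the paper only cites. Much of what you write is correct: the non-surjective case ($\pi_{i_0}(M) \subsetneq S_{i_0}$ forces $\bigoplus_{j \ne i_0} S_j \subseteq M$), the case $S_i \subseteq M$ with induction on $N$, and the pairwise Goursat dichotomy (full projection versus graph of an isomorphism, which is indeed automatically $\F_p$-linear in characteristic $p$) are all sound. Moreover, your treatment of linked pairs can be streamlined: as soon as \emph{one} pair $(i,j)$ is linked by $\phi$, every element of $M$ satisfies $\pi_j(x) = \phi(\pi_i(x))$, so $M \subseteq T_{ij} := \{x \in S : \pi_j(x) = \phi(\pi_i(x))\}$, which is a proper subring; maximality then gives $M = T_{ij}$, which is Type $\Pi_2$ outright, with no need for any block bookkeeping, transitivity, or the conclusion $N=2$.

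The genuine gap is exactly the point you flag as delicate and then do not resolve: the remaining case in which $\pi_{ij}(M) = S_i \oplus S_j$ for \emph{every} pair $i \ne j$ while $M$ contains no $S_i$. Your Claim that a ``block of size $1$'' is a free coordinate forcing $S_i \subseteq M$ (and likewise that several blocks force $M$ to split as a direct sum across blocks) is asserted, not proved, and it cannot follow from the pairwise data alone: for additive groups the analogous statement is false, since $\{(a,b,c) \in \F_p^{\,3} : a+b+c=0\}$ has all pairwise projections full yet contains no factor. Ruling this out genuinely requires the multiplicative structure. One correct route is induction on $N$: with $A = \bigoplus_{i<N} S_i$, the inductive hypothesis gives $\pi_A(M) = A$; apply Goursat to $M \le A \oplus S_N$. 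If $M \cap S_N = S_N$ then $M = S$. Otherwise $M \cap A$ is an ideal of $A$, hence (using the central idempotents $e_i$) a partial sum $\bigoplus_{i \in T} S_i$, and $A/(M \cap A) \cong S_N$ forces the complement of $T$ to be a single index $i_0$; but then $\pi_{i_0,N}(M)$ is the graph of an isomorphism $S_{i_0} \to S_N$, contradicting the assumption that all pairwise projections are full. Hence pairwise-full implies $M = S$, contradicting properness and closing the case. Without this (or an equivalent) argument, your case analysis does not close, and the lemma is not proved.
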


\begin{prop}\label{prop:fieldcopies}
Let $1 \le i, j \le N$ with $i\ne j$. Assume that $J_{ij} \ne \{0\}$ and $S_j$ is a field. If $R$ is $\sigma$-elementary, then $S_k \not\cong S_j$ for all $k \ne i, j$.
\end{prop}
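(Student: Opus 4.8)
The plan is to show that the stated hypotheses force $\sigma(R) = \sigma(R/S_k)$, which contradicts the assumption that $R$ is $\sigma$-elementary. I would begin by recording the structural payoff of the earlier results. By Theorem \ref{thm:Jij=0}(1), $J_{ij}$ is the only nonzero off-diagonal Peirce component, so $J = J_{ij}$; and since $S_k \cong S_j$, Proposition \ref{prop:Jkk} gives $J_{kk} = \{0\}$. Hence $e_k J = J e_k = \{0\}$ and $S_k$ annihilates every other summand, so $S_k$ is a two-sided ideal that is also a ring direct summand: writing $R_0 := R/S_k$, we have $R = R_0 \oplus S_k$ with $S_k \cong \F_{q_j}$ and with the field $S_j \cong \F_{q_j}$ still a summand of $R_0$. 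By Lemma \ref{lem:basics}(3) we get $\sigma(R) \le \sigma(R_0)$ for free, so the entire task is to prove the reverse inequality $\sigma(R) \ge \sigma(R_0)$.

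The core of the argument is a projection on covers. Let $\pi\colon R = R_0 \oplus S_k \to R_0$ be the projection, let $\mcC$ be a minimal cover of $R$, and to each $M \in \mcC$ associate the \emph{fiber subring} $M_0 := \{r_0 \in R_0 : (r_0,0) \in M\}$, which is a subring of $R_0$ isomorphic to $M \cap (R_0 \oplus \{0\})$. Because $\mcC$ covers the slice $R_0 \oplus \{0\}$, the family $\{M_0 : M \in \mcC\}$ covers $R_0$, and it has at most $|\mcC| = \sigma(R)$ members. Each $M_0$ is a \emph{proper} subring of $R_0$ unless $R_0 \oplus \{0\} \subseteq M$; by Lemmas \ref{lem:MaxSubringClassification} and \ref{lem:semisimplemax}, the only maximal subrings with this property are the Type $\Pi_1$ subrings that restrict the $S_k$-coordinate, namely $R_0 \oplus F'$ with $F'$ a maximal subring of $S_k \cong \F_{q_j}$. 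Crucially, the diagonal Type $\Pi_2$ subrings linking a field summand $S_m \cong \F_{q_j}$ of $R_0$ to $S_k$ cause no trouble here: their fiber is the proper ideal $\ker(\rho_m)$, where $\rho_m$ is the projection onto $S_m$. Thus, provided $\mcC$ contains no subring of the form $R_0 \oplus F'$, the family $\{M_0\}$ is a cover of $R_0$ by proper subrings of size $\le \sigma(R)$, whence $\sigma(R_0) \le \sigma(R)$.

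The main obstacle is therefore to dispose of the $S_k$-restricting subrings $R_0 \oplus F'$. The key observation is that such a subring meets only elements $(r_0,c)$ with $c$ in the proper subfield $F'$, so it never covers an element whose $S_k$-coordinate generates $\F_{q_j}$; consequently, for a generator $c_0$, the slice $R_0 \oplus \{c_0\}$ is covered entirely by the remaining subrings of $\mcC$. I would use this to argue that any $R_0 \oplus F' \in \mcC$ is redundant: the elements $(r_0,c)$ with $c \in F'$ that it covers are also covered by the Type $\Pi_1$ subrings $M' \oplus S_k$ (which carry $S_k$ fully and whose $R_0$-parts $M'$ may be arranged to cover $R_0$), together with the conjugates of $S$ and the diagonals, both of which are already forced into every minimal cover by Lemma \ref{lem:SigmaElementary}. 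Hence $R_0 \oplus F'$ can be exchanged out of a minimal cover without increasing its size.

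Establishing this last redundancy rigorously—equivalently, producing a minimal cover of $R$ that uses no $S_k$-restricting subring—is the technical heart of the proof and the step I expect to be delicate, precisely because a single slice cannot be chosen that simultaneously disposes of the diagonals, the conjugates, and the subrings $R_0 \oplus F'$. Once it is in place, the projection argument delivers $\sigma(R_0) \le \sigma(R)$, so $\sigma(R) = \sigma(R_0) = \sigma(R/S_k)$, contradicting the $\sigma$-elementarity of $R$ and forcing $S_k \not\cong S_j$ for all $k \ne i,j$.
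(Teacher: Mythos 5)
Your structural reduction is sound as far as it goes, and it mirrors the paper's setup: with $S_k \cong S_j$ a field, Theorem \ref{thm:Jij=0}, Proposition \ref{prop:Jkk}, and Lemma \ref{lem:isolated} do make $S_k$ a two-sided ideal that is a ring direct summand, $R = R_0 \oplus S_k$, and your identification of the maximal subrings containing $R_0 \oplus \{0\}$ as exactly the subrings $R_0 \oplus F'$ with $F'$ maximal in $S_k$ is correct. (One inaccuracy: $J = J_{ij}$ does not follow from Theorem \ref{thm:Jij=0}(1), which kills only the off-diagonal components; at this stage a diagonal component $J_{\ell\ell}$ with $S_\ell$ a field isomorphic to neither $S_i$ nor $S_j$ may still be nonzero, and the paper only disposes of that case later. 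This does not affect the claim that $S_k$ is an ideal summand.)

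The step you yourself flag as unestablished is a genuine gap, and your sketched repair is circular: the Type $\Pi_1$ subrings $M' \oplus S_k$ occurring in a minimal cover $\mcC$ cannot have their $R_0$-parts ``arranged to cover $R_0$,'' since any cover of $R_0$ by proper subrings has at least $\sigma(R_0) > \sigma(R) = |\mcC|$ members---that inequality is precisely the contradiction you are trying to reach, so it cannot be used mid-proof. Worse, the obstruction is not merely technical: no exchange or slice argument (fixed or adaptive) can work uniformly. Let $t$ be the number of indices $k' \ne i,j$ with $S_{k'} \cong \F_{q_j}$. If $t \ge \tau(q_j)-1$, then for suitable $r_0 \in R_0$ \emph{every} $c \in \F_{q_j}$ is bad: either $c$ fails to generate $\F_{q_j}$, so $(r_0,c)$ lies in some $R_0 \oplus F'$, or $c = \phi(s_m)$ for one of the other copies $S_m \cong \F_{q_j}$ and some isomorphism $\phi: S_m \to S_k$, so $(r_0,c)$ lies in a diagonal subring. (For instance, with $p = q_j = 2$ and $t \ge 1$, take $r_0$ whose $S_m$-component is $1$: then both $c=0$ and $c=1$ are bad.) This is why the paper's proof splits into two cases. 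When $t \ge \tau(q_j)-1$ it abandons covering arguments entirely and argues numerically: $\sigma(R) < \sigma\bigl(\bigoplus_{\ell=1}^{t+1}\F_{q_j}\bigr)$ is bounded above by Lemma \ref{lem:fieldbounds}, while Lemma \ref{lem:|J|lowerbound} gives $q_j^2+1 \le q^{n_i}+1 \le \sigma(R)$, and these bounds are incompatible. Only when $t < \tau(q_j)-1$ does it run a covering argument, and there the choice is adaptive rather than exchange-based: for each $a$ in the complement of $T := \bigoplus_{k' \in \msT} S_{k'}$, an element $\alpha \in T$ is chosen \emph{depending on} $a$, with components generating their summands and having pairwise distinct minimal polynomials, all different from the minimal polynomial of the $S_j$-component of $a$ (possible exactly because $t < \tau(q_j)-1$ supplies enough irreducible polynomials); then $a+\alpha$ lies only in maximal subrings containing $T$, and those subrings project to a cover of $R/T$ of size at most $\sigma(R)$, the desired contradiction. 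A correct proof along your lines would need both of these missing ingredients.
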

\begin{proof}
Assume that $R$ is $\sigma$-elementary. By Theorem \ref{thm:Jij=0}, $J_{ij}$ is a simple ($S_i$, $S_j$)-bimodule. Let $\msT:=\{1 \le k \le N : k \ne i, k \ne j, S_k \cong S_j\}$. Suppose that $\msT \ne \varnothing$, let $t = |\msT|$, and let $T = \bigoplus_{k \in \msT} S_k$. From Theorem \ref{thm:Jij=0}, we know that $J_{k\ell}=J_{\ell k} = \{0\}$ whenever $k \in \msT$ and $\ell \ne k$. Moreover, by Proposition \ref{prop:Jkk}, $J_{kk} =\{0\}$ for all $k \in \msT$. Hence, by Lemma \ref{lem:isolated}, each $S_k$---and therefore $T$---is a two-sided ideal of $R$.

Let $q_j = p^{d_j}$ and $q = q_i \otimes q_j$. If $n_i=1$, then $R=S_i \oplus S_j \oplus J_{ij}$ by Lemma \ref{lem:|J|lowerbound}, and there is nothing to prove because $\msT=\varnothing$. So, we may assume that $n_i \ge 2$.

We break the proof into two cases, depending on whether $t < \tau(q_j)-1$ or $t \ge \tau(q_j)-1$. The latter case is easier, so we will deal with that first.

When $t \ge \tau(q_j)-1$, $S_j \oplus T \cong \bigoplus_{i=1}^{t+1} \F_{q_j}$ is coverable, and $\sigma(R) < \sigma(S_j \oplus T)$ since $S_j \oplus T$ occurs as a residue ring of $R$. By Lemma \ref{lem:fieldbounds},
\begin{equation}\label{eq:taubound}
\sigma(R) < \begin{cases} \tfrac{1}{2}(p^2+p), & d_j=1\\  q_j^2/(2d_j), &d_j \ge 2 \end{cases}.
\end{equation}
However, by Lemma \ref{lem:|J|lowerbound}, $q_j^2+1 \le q^{n_i} + 1 \le \sigma(R)$. This is not compatible with \eqref{eq:taubound}, since neither $p^2+1 < \tfrac{1}{2}(p^2+p)$ nor $q_j^2+1 < q_j^2/(2d_j)$ is a valid inequality.

For the remainder of the proof, assume that $t < \tau(q_j)-1$. Define $\whT$ to be $\whT:=\bigoplus_{\ell \notin \msT} S_\ell$. Then, $R = (\whT \oplus J) \oplus T$. 

We will sort the maximal subrings of $R$ into three classes $\mcC_1$, $\mcC_2$, and $\mcC_3$. Let $\mcC_1$ be the collection of all maximal subrings of $R$ that contain $T$. Note that these maximal subrings are all inverse images of maximal subrings of $R/T$. 

Next, suppose $M \subseteq R$ is maximal with $T \not\subseteq M$. If $J \not\subseteq M$, then $M = S' \oplus I$ for some $S' \in \msS(R)$ and some maximal subideal $I \subseteq J$. However, as noted earlier, $J_{k\ell} = J_{\ell k} = \{0\}$ for all $1 \le \ell \le N$ and all $k \in \msT$. So, the action of $T$ on $J$ is trivial. It follows that $T \subseteq S'$ for all $S' \in \msS(R)$, and hence $J$ must be contained in $M$. 

Since $J \subseteq M$ and $T \not\subseteq M$, $M$ is the inverse image of a maximal subring of $R/J$ that does not contain $T$. In light of Lemma \ref{lem:semisimplemax}, we can partition such subrings into two classes. Use a bar to denote passage from $R$ to $R/J$. Let $\mcC_2$ be the set of all subrings of the form $\whT \oplus J \oplus M'$, where $M'$ is maximal in $T$. Subrings in $\mcC_2$ are those where $\overline{S_j}$ is not linked to any $\overline{S_k}$, $k \in \msT$. Finally, let $\mcC_3$ be the collection of maximal subrings where $\overline{S_j}$ is linked to some $\overline{S_k}$ with $k \in \msT$.

Now, since $t < \tau(q_j)$, $T$ is not coverable. Next, we will show that for all $a \in \whT \oplus J$, we can find $\alpha \in T$ such that $\alpha$ generates $T$ (as a ring), and $a+\alpha$ is not contained in a maximal subring in either $\mcC_2$ or $\mcC_3$. This is easy to see with regard to $\mcC_2$: for any $a$ and $\alpha$ as above, if $a+\alpha \in M \in \mcC_2$, then $\alpha \in M'$ for some maximal subring $M'$ of $T$. But, this is impossible because $\alpha$ generates $T$.

For $\mcC_3$, the argument is more delicate. Let $\whS_j$ be an additive complement to $S_j$ in $\whT$. Write $a$ as $a=s+b$, where $s \in S_j$ and $b \in \whS_j \oplus J$. Since $t < \tau(q_j)-1$, $t+1$ is less than or equal to the number of irreducible polynomials in $\F_p[x]$ of degree $d_j$. So, for each $k \in \msT$, we can find $\alpha_k \in S_k$ such that $\alpha_k$ generates $S_k$; each $\alpha_k$ has a distinct minimal polynomial over $\F_p$; and no $\alpha_k$ has the same minimal polynomial as $s$. Let $\alpha = \sum_{k \in \msT} \alpha_k$. Then, for every $k$ and every automorphism $\phi$ of $\F_{q_j}$, $\phi(s) \ne \alpha_k$. We conclude that $a + \alpha$ cannot lie in any subring where $\overline{S_j}$ is linked to some $\overline{S_k}$.

We have shown that for every $a \in \whT \oplus J$, there exists $\alpha \in T$ such that $a + \alpha$ lies in a maximal subring in class $\mcC_1$. Now, consider a minimal cover $\mcC$ of $R$. Let $\mcA = \mcC \cap \mcC_1$. By our work above, $\mcA \ne \varnothing$, because elements of the form $a+\alpha$ can only lie in maximal subrings in $\mcC_1$. Furthermore, the subrings in $\mcA$ form a cover of $R/T$, so $\sigma(R/T) \le |\mcA| \le \sigma(R)$. This contradicts the fact that $R$ is $\sigma$-elementary, and we are done.\end{proof}

\section{Proof of Theorem \ref{thm:main}}\label{sect:mainproof}

We now have all the ingredients necessary to prove Theorem \ref{thm:main}. Let $R$ be a $\sigma$-elementary ring with unity. The idea of the proof is to write $R$ as a direct sum of subrings $R = R_1 \oplus R_2 \oplus R_3$ in such a way that all maximal subrings of $R$ respect the decomposition. That is, any maximal subring $M$ of $R$ is equal to one of
\begin{equation}\label{eq:max3}
M_1 \oplus R_2 \oplus R_3, \; R_1 \oplus M_2 \oplus R_3, \; \text{or} \; R_1 \oplus R_2 \oplus M_3,
\end{equation}
where $M_i$ is maximal in $R_i$. Then, $\sigma(R) = \min\{\sigma(R_1), \sigma(R_2), \sigma(R_3)\}$ by Lemma \ref{lem:AMM2.2}, and the problem is reduced to a small number of possible cases.

\begin{prop}\label{prop:R1R2R3}
Assume $R$ is a finite unital ring of characteristic $p$ containing subrings $R_1$, $R_2$, and $R_3$ that satisfy all of the following properties:
\begin{itemize}
\item $R = R_1 \oplus R_2 \oplus R_3$.
\item $R_1 \cong A(n,q_1,q_2)$ for some $n \ge 2$.
\item $R_2 \cong \bigoplus_{j \in \mcI} M_{n_j}(q_j)$, where $\mcI$ is some finite index set, $n_j \ge 2$ for each $j$, and $M_{n_j}(q_j) \not\cong M_n(q_1)$ for all $j$.
\item $R_3$ is commutative, and if a field $F$ occurs as a direct summand of $R_3 / \msJ(R_3)$, then $F \not\cong \F_{q_2}$.
\end{itemize}
Then, $\sigma(R) = \min\{\sigma(R_1), \sigma(R_2), \sigma(R_3)\}$.
\end{prop}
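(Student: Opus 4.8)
The plan is to show that every maximal subring of $R$ respects the decomposition $R = R_1 \oplus R_2 \oplus R_3$, so that the conclusion follows immediately from Lemma \ref{lem:AMM2.2}. First I would record the relevant structural data. Since $R_2$ is semisimple, the Jacobson radical is $J = \msJ(R) = \msJ(R_1) \oplus \msJ(R_3)$, where $\msJ(R_1)$ is the simple $(M_n(q_1), \F_{q_2})$-bimodule appearing in $A(n,q_1,q_2)$ and $\msJ(R_3)$ is the (commutative) radical of $R_3$. In particular $J$ respects the block decomposition, so $R/J \cong (M_n(q_1) \oplus \F_{q_2}) \oplus \bigl(\bigoplus_{j} M_{n_j}(q_j)\bigr) \oplus (R_3/\msJ(R_3))$, a semisimple ring whose simple summands coming from the three blocks are, respectively, $M_n(q_1)$ and $\F_{q_2}$; the matrix rings $M_{n_j}(q_j)$; and the field summands of $R_3/\msJ(R_3)$. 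I then invoke Lemma \ref{lem:MaxSubringClassification} to split the maximal subrings of $R$ into those containing $J$ and those that do not.

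For a maximal subring $M \supseteq J$, the subring $M$ is the inverse image of a maximal subring $\overline{M}$ of $R/J$, so it suffices to prove that $\overline{M}$ respects the coarse decomposition of $R/J$ into its three blocks. By Lemma \ref{lem:semisimplemax}, $\overline{M}$ is of Type $\Pi_1$ or Type $\Pi_2$. A Type $\Pi_1$ subring modifies a single simple summand and so stays within one block. A Type $\Pi_2$ subring links two isomorphic simple summands, and it fails to respect the block decomposition only when it links a summand in one block to an isomorphic summand in a different block. Here I would check that the four hypotheses eliminate every cross-block isomorphism: $M_n(q_1) \not\cong M_{n_j}(q_j)$ by assumption; $M_n(q_1)$ is noncommutative (as $n \ge 2$), so it is isomorphic to no field summand of $R_3/\msJ(R_3)$; $\F_{q_2}$ is a field, so it is isomorphic to no $M_{n_j}(q_j)$ (as $n_j \ge 2$); and $\F_{q_2}$ is isomorphic to no field summand of $R_3/\msJ(R_3)$ by the last hypothesis. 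Thus every $\Pi_2$ linking is internal to a single block, and the inverse image of $\overline{M}$ respects $R = R_1 \oplus R_2 \oplus R_3$.

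For a maximal subring $M$ with $J \not\subseteq M$, Lemma \ref{lem:MaxSubringClassification} gives $M = S' \oplus \msJ(M)$ with $S' \in \msS(R)$. The key observation is that the block idempotents $e_1, e_2, e_3$ (the identities of $R_1, R_2, R_3$) lie in every Wedderburn complement: fixing a block-respecting complement $S = S^{(1)} \oplus R_2 \oplus S^{(3)}$, so that each $e_i \in S$, Theorem \ref{thm:Wedderburn} writes any other complement as $S' = (1+x)^{-1} S (1+x)$ for some $x \in J$, and since each $e_i$ is central in $R$ we get $e_i = (1+x)^{-1} e_i (1+x) \in S'$. Hence $e_1, e_2, e_3 \in S' \subseteq M$; as these are central orthogonal idempotents summing to $1_R$, this yields $M = e_1 M \oplus e_2 M \oplus e_3 M = (M \cap R_1) \oplus (M \cap R_2) \oplus (M \cap R_3)$, so $M$ respects the decomposition.

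Having shown that every maximal subring of $R$ has the form \eqref{eq:max3} (maximality forces exactly one component to be a maximal subring of the corresponding $R_i$ and the other two to be full), the hypotheses of Lemma \ref{lem:AMM2.2} are satisfied, and I conclude $\sigma(R) = \min\{\sigma(R_1), \sigma(R_2), \sigma(R_3)\}$. The hard part will be the Type $\Pi_2$ case analysis in the second paragraph: one must verify that the four hypotheses collectively rule out all cross-block linkings among the simple summands of $R/J$. The central-idempotent argument for the case $J \not\subseteq M$ is comparatively routine once one notes that central elements are fixed by conjugation, and the reduction to Lemma \ref{lem:AMM2.2} is then mechanical.
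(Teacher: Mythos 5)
Your proof is correct, and its overall skeleton is the paper's: reduce to showing every maximal subring of $R$ has the form \eqref{eq:max3}, split the maximal subrings according to whether they contain $J$ using Lemma \ref{lem:MaxSubringClassification}, handle the case $J \subseteq M$ via the Type $\Pi_1$/$\Pi_2$ classification of Lemma \ref{lem:semisimplemax}, and finish with Lemma \ref{lem:AMM2.2}. Where you genuinely diverge is the case $J \not\subseteq M$. The paper argues concretely there: it asserts that a maximal subideal $I$ of $J = \msJ(R_1) \oplus \msJ(R_3)$ decomposes as $I_1 \oplus J_3$ or $J_1 \oplus I_3$, and computes $S^{1+x} = S_1^{1+x_1} \oplus R_2 \oplus S_3^{1+x_3}$ using the fact that each block acts trivially on the other blocks' radicals. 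Your central-idempotent argument replaces both computations at once: since $e_1, e_2, e_3$ are central, conjugation by $1+J$ fixes them, so they lie in every member of $\msS(R)$ and hence in $M$, which immediately gives $M = (M \cap R_1) \oplus (M \cap R_2) \oplus (M \cap R_3)$. This is cleaner, and it buys you something real: you never need the block-wise decomposition of maximal subideals of $J$, which the paper uses without justification (it does follow easily, in fact by the same idempotent trick, since $e_1 I \subseteq I$). Your argument does lean on the assertion that each $e_i$ lies in the fixed complement $S = S^{(1)} \oplus R_2 \oplus S^{(3)}$; this is the standard fact that a Wedderburn complement of a unital ring contains the identity (the identity $f$ of $S^{(1)}$ satisfies that $e_1 - f$ is an idempotent in the nilpotent radical, hence zero), and it would be worth one line to make it explicit. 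Finally, one bookkeeping omission in your $J \subseteq M$ case: your enumeration of cross-block linkings misses the pair consisting of an $M_{n_j}(q_j)$ from $R_2$ and a field summand of $R_3/\msJ(R_3)$; it is excluded exactly as in your other commutativity checks, since $n_j \ge 2$ makes $M_{n_j}(q_j)$ noncommutative, so nothing breaks, but the list should include it (the paper's dichotomy by whether the linked summand $A$ is a field or noncommutative covers all five cross-block pairs automatically).
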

\begin{proof}
It suffices to show that every maximal subring of $R$ decomposes as in \eqref{eq:max3}. Then, Lemma \ref{lem:AMM2.2} can be used to reach the desired conclusion.

Let $J = \msJ(R)$. For $i=1,2,3$, let $J_i = \msJ(R_i)$, and fix a semisimple complement $S_i$ for $J_i$ in $R_i$. Note that $J_2 = \{0\}$ and $S_2=R_2$ because $R_2$ is semisimple. So, $J=J_1 \oplus J_3$ and $S := S_1 \oplus R_2 \oplus S_3$ is a semisimple complement to $J$ in $R$.

Let $M$ be a maximal subring of $R$. Assume first that $J \not\subseteq M$. By Lemma \ref{lem:MaxSubringClassification}, $M=S' \oplus I$, where $S'$ is a conjugate of $S$ and $I$ is a maximal subideal of $J$. Any such $I$ decomposes as either $I=I_1 \oplus J_3$ or $I=J_1 \oplus I_3$, where $I_1$ (respectively, $I_3$) is maximal in $J_1$ (respectively, $J_3$). Moreover, the conjugates of $S$ have a similar form. Indeed, $S_1 \oplus S_2$ has a trivial action on $J_3$, and likewise for the action of $S_2 \oplus S_3$ on $J_1$. So, given $x = x_1 + x_3 \in J$ with $x_1 \in J_1$ and $x_3 \in J_3$, we have $S^{1+x} = S_1^{1+x_1} \oplus R_2 \oplus S_3^{1+x_3}$. 

With these observations in mind, we see that if $I = I_1 \oplus J_3$, then 
\begin{align*}
M = S' \oplus I =(S_1' \oplus I_1) \oplus R_2 \oplus (S_3' \oplus J_3)
\end{align*}
for some conjugate $S_1'$ of $S_1$ and some conjugate $S_3'$ of $S_3$. In this case, $S_1' \oplus I_1$ is maximal in $R_1$, and $S_3' \oplus J_3 = R_3$. Thus, $M$ has the form of \eqref{eq:max3}. The same steps show that this also holds when $I = J_1 \oplus I_3$.

Now, assume that $J \subseteq M$. Then, $M = T \oplus J$ for some maximal subring $T$ of $S$. By Lemma \ref{lem:semisimplemax}, $T$ is either of Type $\Pi_1$ or Type $\Pi_2$. If $T$ is of Type $\Pi_1$, then $T$ has one of the forms
\begin{equation*}
T_1 \oplus S_2 \oplus S_3, \; S_1 \oplus T_2 \oplus S_3, \; \text{ or } \; S_1 \oplus S_2 \oplus T_3,
\end{equation*}
where $T_i$ is maximal in $S_i$ for $i=1,2,3$, and $M$ decomposes as in \eqref{eq:max3}. On the other hand, if $T$ is of Type $\Pi_2$, then (modulo $J$) two simple summands $A$ and $A'$ of $S$ are linked by an $\F_p$-algebra isomorphism. If $A$ is a field, then both $A$ and $A'$ are subrings of $S_3$, because no simple summand of $S_3$ is isomorphic to $\F_{q_2}$. In this case, $T = S_1 \oplus S_2 \oplus T_3$. Similarly, if $A$ is noncommutative, then both $A$ and $A'$ are subrings of $S_2$, because no simple summand of $S_2$ is isomorphic to $M_n(q_1)$. Hence, $T = S_1 \oplus T_2 \oplus S_3$. Thus, in all cases $M$ has the form of \eqref{eq:max3}, and we are done.
\end{proof}

\begin{myproof}[of Theorem \ref{thm:main}]
We may assume that $R$ is finite, has characteristic $p$, and $J:=\msJ(R)$ satisfies $J^2 =\{0\}$. We will use the notation established at the beginning of Section \ref{sect:Peirce}. By Theorem \ref{thm:Jij=0}, there is at most pair $(i, j)$ such that $1 \le i, j \le N$, $i \ne j$, and $J_{ij} \ne \{0\}$. If such a pair exists, then $J_{ij}$ is simple and one of $S_i$ or $S_j$ is a field. Without loss of generality, assume that $n_j=1$. Then, $S_i \oplus S_j \oplus J_{ij} \cong A(n_i, q_i, q_j)$. If $n_i = 1$, then $R = S_i \oplus S_j \oplus J_{ij}$ by Lemma \ref{lem:|J|lowerbound} and we are done. So, assume that $n_i \ge 2$.

If $1 \le k \le N$ is such that $S_k$ is noncommutative, then $J_{kk}=\{0\}$ by Proposition \ref{prop:Jii=0}, and by Proposition \ref{prop:onecopy}, $S_\ell \not\cong S_k$ for all $\ell \ne k$. In particular, $J_{ii} = \{0\}$ and $S_k \not\cong S_i$ when $k \ne i$. Likewise, $J_{jj} = \{0\}$ by Proposition \ref{prop:Jkk}, and for all $1 \le k \le N$ such that $k \ne j$, we have $S_k \not\cong S_j$.

Form the following three subrings of $R$ (these subrings may be equal to $\{0\}$). Let $R_1:=S_i \oplus S_j \oplus J_{ij}$. Let $\mcI := \{1 \le k \le N : k \ne i, k \ne j, S_k \text{ is noncommutative}\}$ and let $R_2 := \bigoplus_{k \in \mcI} S_k$. Finally, let $\mcI' := \{1 \le k \le N : k \ne i, k \ne j, k \notin \mcI\}$ and let $R_3 := \bigoplus_{k \in \mcI'} (S_k \oplus J_{kk})$. Then, $R=R_1 \oplus R_2 \oplus R_3$, and the subrings satisfy the hypotheses of Proposition \ref{prop:R1R2R3}. Thus, $\sigma(R) = \min\{\sigma(R_1), \sigma(R_2), \sigma(R_3)\}$. But, $R$ is $\sigma$-elementary, so in fact $R=R_\ell$ for some $1 \le \ell \le 3$, and the other two subrings must be $\{0\}$. If $R=R_1$, then $R \cong A(n_i, q_i, q_j)$. If $R=R_2$, then $R$ is noncommutative and semisimple. Hence, $R \cong M_n(q)$ for some $n \ge 2$ by \cite[Proposition 5.4, Theorem 5.11]{PeruginelliWerner}. Finally, if $R=R_3$, then $R$ is commutative. We may then apply \cite[Theorem 4.8]{SwartzWerner} to conclude that either $R \cong \bigoplus_{i=1}^{\tau(q)} \F_q$, or $R \cong \F_q(+)\F_q^2$, depending on whether $R$ is semisimple or not.
\end{myproof}

\section{Rings without unity and covers by unital subrings}\label{sect:othermain}

Theorem \ref{thm:main} provides a complete classification of $\sigma$-elementary rings with unity, which in turn allows us to describe all the integers that occur as covering numbers of such rings. We now examine two related covering problems. First, what can be said of covering numbers for rings without a multiplicative identity? Second, how do our results change if we insist that each subring in a cover of a unital ring $R$ must contain $1_R$? As shown in Theorem \ref{thm:rngtoring}, these two problems are closely related, and an answer to the second question provides an answer to the first.

Recall the definitions of $\sigma_u$ and $\sigma_u$-elementary given in Definition \ref{def:sigmau}. It is clear that parts (3) and (4) of Lemma \ref{lem:basics} carry over to coverings of $R$ by unital subrings: for any two-sided ideal of $R$, we have $\sigma_u(R) \le \sigma_u(R/I)$; and when $R$ is coverable by unital subrings that are contained in maximal subrings, we may assume that each subring in a minimal cover is maximal. Going forward, we will make free and frequent use of these properties.

It is well known that any ring without unity can be embedded in a larger ring that contains a multiplicative identity. Using this construction, we will connect covers of rings without unity to covers of unital rings by unital subrings and prove Theorem \ref{thm:rngtoring}.

\begin{myproof}[of Theorem \ref{thm:rngtoring}]
We first proceed as in \cite[Section 3]{SwartzWerner} and reduce to the case where $R$ is finite and has prime power order. By results of Neumann \cite[Lemma 4.1,4.4]{Neumann} and Lewin \cite[Lemma 1]{Lewin}, $R$ contains a two-sided ideal $I$ of finite index such that $\sigma(R) = \sigma(R/I)$. Hence, we may assume that $R$ is finite. Next, the Chinese Remainder Theorem still applies to finite rings without identity, so $R \cong \bigoplus_{i=1}^t R_i$, where each $R_i$ is a ring and $|R_i| = p_i^{n_i}$ for some distinct primes $p_1, \ldots, p_t$. Lemma \ref{lem:AMM2.2} holds for rings without unity, so $\sigma(R) = \min_{1 \le i \le t}\{\sigma(R_i)\}$. Thus, we may assume that $|R|=p^n$ for some prime $p$ and some $n \ge 1$.

Now, as in \cite[Lemma 3.2, Proposition 3.3]{SwartzWerner}, one may prove that $pR$ is contained in every maximal subring of $R$. It follows that $\sigma(R) = \sigma(R/pR)$, and hence we may assume that $pR = \{0\}$. From here, we will embed $R$ in a unital ring $R'$ that has characteristic $p$.

Let $R' := \F_p \times R$ with multiplication given by the rule:
 \[ (n_1, r_1)(n_2, r_2) := (n_1n_2, n_1r_2 + n_2r_1 + r_1r_2).\]
Notice that, for any $(n, r) \in R'$, $(1,0)(n,r) = (n,r)(1,0) = (n,r)$, so $R'$ has a multiplicative identity.  We claim that $\sigma_u(R') = \sigma(R)$.
 
Indeed, let $\mcC$ be a minimal cover of $R$ by subrings.  Given $S \in \mcC$, define $\F_p \times S :=\{(n, s) \in R' : s \in S\}$, which is a unital subring of $R'$, and is proper in $R'$ because $S \subsetneq R$. Let $\mcC' := \{\F_p \times S : S \in \mcC\}$. If $(n, r) \in R'$, then $r \in S$ for some $S \in \mcC$, so $(n,r) \in \F_p \times S \in \mcC'$.  Hence, $\mcC'$ is a cover of $R'$ by unital subrings, and so $\sigma_u(R') \le |\mcC'| = |\mcC| = \sigma(R).$
 
Next, let $\mcU'$ be a minimal cover of $R'$ by unital subrings, which we now know must exist.  Let $\rho: R' \to R$ be the projection map defined by $\rho((n,r)) := r$.  The map $\rho$ is not multiplicative, but $\rho(S')$ is a proper subring of $R$ for any $S' \in \mcU'$. To see this, let $r_1, r_2 \in \rho(S')$. Then, $(n_1, r_1), (n_2, r_2) \in S'$ for some $n_1, n_2 \in \F_p$. Because $(1,0) \in S'$, both $(n_1, 0)$ and $(n_2, 0)$ are in $S'$. Thus, $S'$ contains both $(0,r_1)$ and $(0,r_2)$, and hence $r_1+r_2, r_1r_2 \in \rho(S')$. Moreover, $\rho(S') \ne R$, because if not, then $\{0\} \times R \subseteq S'$ and $(1,0) \in S'$, which implies that $S' = R'$.

Finally, given $r \in R$, there exists $S' \in \mcU'$ such that $(0,r) \in S'$. Hence, $r \in \phi(S')$ and $\{\phi(S') : S' \in \mcU'\}$ is a cover of $R$ of size at most $|\mcU'|$. Consequently, $\sigma(R) \le |\mcU'| = \sigma_u(R)$, completing the proof. 
\end{myproof}

In light of Theorem \ref{thm:rngtoring}, determining the covering number of any ring (with or without unity) reduces to the case of a unital ring $R$, for which we may consider either covers by subrings (which need not contain $1_R$) or covers by unital subrings. These problems can in turn be solved by classifying $\sigma$-elementary and $\sigma_u$-elementary rings. The classification of $\sigma$-elementary rings has been done in Theorem \ref{thm:main}. We now proceed to prove Theorem \ref{thm:mainunital}, which classifies $\sigma_u$-elementary rings. As we shall see, the class of $\sigma_u$-elementary rings is largely the same as the class of $\sigma$-elementary rings, with the greatest discrepancy occurring in the commutative semisimple case.

\begin{lem}\label{lem:covnumrelation}
Let $R$ be a ring with unity.  Then, $\sigma(R) \le \sigma_u(R)$, with equality if and only if the subrings in some minimal cover of $R$ each contain $1_R$.
\end{lem}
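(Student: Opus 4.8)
The plan is to prove the inequality $\sigma(R) \le \sigma_u(R)$ directly from the definitions and then read off the equality characterization by comparing minimal covers of the two types. The essential observation is that a cover of $R$ by unital subrings is, in particular, a cover of $R$ by (proper) subrings of the same cardinality, since the only extra requirement on a unital subring is that it contain $1_R$.

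First I would handle the inequality. If $R$ admits no cover by unital subrings, then $\sigma_u(R) = \infty$ and the inequality is trivial. Otherwise, take a minimal cover $\mcU$ of $R$ by unital subrings, so $|\mcU| = \sigma_u(R)$. Each member of $\mcU$ is a proper subring of $R$, so $\mcU$ is also a cover of $R$ by subrings, whence $\sigma(R) \le |\mcU| = \sigma_u(R)$.

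Next I would establish the equivalence by arguing both implications. For the ``if'' direction, suppose some minimal cover $\mcC$ of $R$ by subrings has every member containing $1_R$. Then $\mcC$ is a cover of $R$ by proper unital subrings of cardinality $\sigma(R)$, so $\sigma_u(R) \le |\mcC| = \sigma(R)$; together with the inequality already proved, this forces $\sigma(R) = \sigma_u(R)$. For the ``only if'' direction, suppose $\sigma(R) = \sigma_u(R)$, and choose a minimal cover $\mcU$ of $R$ by unital subrings, so that $|\mcU| = \sigma_u(R) = \sigma(R)$. Since the members of $\mcU$ are proper subrings and $|\mcU|$ equals the smallest possible size of a cover of $R$ by subrings, $\mcU$ is itself a minimal cover of $R$ by subrings, and by construction each of its members contains $1_R$. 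This exhibits the required minimal cover.

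I do not expect any serious obstacle here, as the argument is essentially bookkeeping with the two definitions. The only points requiring mild care are the convention $\sigma_u(R) = \infty$ when $R$ has no unital cover, and the small observation that a minimal \emph{unital} cover, being a cover by subrings of the smallest possible cardinality, is automatically a minimal cover by subrings — which is exactly what makes the ``only if'' direction go through.
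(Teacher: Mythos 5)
Your proposal is correct and follows exactly the same idea as the paper, whose entire proof is the one-line observation that every unital subring is a subring (but not conversely); you have simply spelled out the bookkeeping in both directions of the equivalence.
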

\begin{proof}
Note that every unital subring is a subring (but not conversely).
\end{proof}

\begin{lem}\label{lem:sigmaunotequalsigma}\mbox{}
\begin{enumerate}[(1)]
\item For any prime $p$, $\bigoplus_{i=1}^p \F_p$ is $\sigma$-elementary, but is not coverable by unital subrings.
\item For any prime $p$, $\bigoplus_{i=1}^{p+1} \F_p$ is $\sigma_u$-elementary, but is not $\sigma$-elementary. Furthermore, $\sigma_u\big(\bigoplus_{i=1}^{p+1} \F_p\big) = \sigma\big(\bigoplus_{i=1}^{p+1} \F_p\big) = p + \binom{p}{2}$.

\item The ring $A(1,2,2)$ is $\sigma_u$-elementary, but is not $\sigma$-elementary. Furthermore, $\sigma_u(A(1,2,2))= \sigma(A(1,2,2)) = 3$.
\end{enumerate}
\end{lem}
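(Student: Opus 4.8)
The plan is to handle the three rings separately, in each case reducing the question to the combinatorics of covering a direct sum of fields and exploiting the classifications of maximal subrings in Lemma \ref{lem:semisimplemax} and Lemma \ref{lem:MaxSubringClassification}. For part (1), that $R := \bigoplus_{i=1}^p \F_p$ is $\sigma$-elementary is immediate from Theorem \ref{thm:main}(1): since $\tau(p) = p$, $R$ is exactly the commutative semisimple $\sigma$-elementary ring attached to $q=p$ (equivalently, every proper quotient is $\bigoplus_{i=1}^m \F_p$ with $m < p = \tau(p)$, hence uncoverable by \cite[Theorem 3.5]{Werner}). To see $R$ has no cover by unital subrings, I would note that a unital subring contains $1_R = (1,\dots,1)$ and exhibit $x = (0,1,2,\dots,p-1)$: its coordinates are pairwise distinct, so by Lagrange interpolation the idempotents $e_j$ are polynomials in $x$, whence $\F_p[x] = R$ and $x$ lies in no proper unital subring. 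Thus the proper unital subrings do not cover $R$.

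For part (2), write $R := \bigoplus_{i=1}^{p+1}\F_p$. It is not $\sigma$-elementary, because Theorem \ref{thm:main}(1) forces a commutative semisimple $\sigma$-elementary ring to have exactly $\tau(p)=p$ field summands, not $p+1$. By Lemma \ref{lem:semisimplemax} the maximal subrings of $R$ are the non-unital $N_k := \{f : f_k = 0\}$ (Type $\Pi_1$, as the only maximal subring of $\F_p$ is $\{0\}$) and the unital $A_{ij} := \{f : f_i = f_j\}$ (Type $\Pi_2$, as the only $\F_p$-algebra automorphism of $\F_p$ is the identity). Reducing a minimal unital cover to maximal unital subrings, I identify a collection $\{A_{ij} : \{i,j\} \in E\}$ with a graph $E$ on $\{1,\dots,p+1\}$; an element $f$ is uncovered exactly when $f$ is a proper $p$-coloring of $E$, so the collection covers iff $E$ is not $p$-colorable, i.e.\ iff $E = K_{p+1}$. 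Hence $\sigma_u(R) = \binom{p+1}{2} = p + \binom{p}{2}$. That $R$ is $\sigma_u$-elementary then follows since each proper quotient is $\bigoplus_{i=1}^m \F_p$ with $m \le p$, which admits an injective coordinate vector and so is uncoverable by unital subrings, giving $\sigma_u(R/I) = \infty$.

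The main obstacle is the lower bound $\sigma(R) \ge \binom{p+1}{2}$ in part (2), where the non-unital subrings $N_k$ must be ruled out as a way to save. A cover by maximal subrings corresponds to a graph $E$ on $\{1,\dots,p+1\}$ together with a set $T$ of ``zero-coordinates,'' and it fails to cover precisely when some $f$ properly colors $E$ and is nonzero on $T$. The key device is to adjoin a vertex $\ast$ joined to every vertex of $T$, obtaining a graph $H^+$ on $p+2$ vertices; declaring $f_\ast = 0$ shows the collection covers $R$ iff $H^+$ is not $p$-colorable, i.e.\ $\chi(H^+) \ge p+1$. Since a graph of chromatic number $c$ has at least $\binom{c}{2}$ edges (an edge is forced between each pair of color classes), this gives $|E| + |T| = |E(H^+)| \ge \binom{p+1}{2}$. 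Combined with $\sigma(R) \le \sigma_u(R) = \binom{p+1}{2}$ from Lemma \ref{lem:covnumrelation}, we conclude $\sigma(R) = \binom{p+1}{2} = p + \binom{p}{2}$.

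For part (3), $A(1,2,2)$ is the ring of $2\times 2$ upper-triangular matrices over $\F_2$ (order $8$), and Theorem \ref{thm:smalln}(1)---equivalently Lemma \ref{AGL:sigmaelem}, since $\sigma(R) = 3 = \sigma(\F_2\oplus\F_2) = \sigma(S)$---already records that it is not $\sigma$-elementary with $\sigma(R) = 3$. For $\sigma_u$, Lemma \ref{lem:MaxSubringClassification} shows the maximal unital subrings are exactly the two conjugates of $S$ comprising $\msS(R)$ together with $T := \F_2\cdot 1_R \oplus J$ (the part-(1) subring lying over the diagonal $\F_2 \hookrightarrow \F_2\oplus\F_2$); a direct check of the eight elements confirms these three cover $R$ while no two of them do, so $\sigma_u(R) = 3 = \sigma(R)$. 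Finally, $R$ is $\sigma_u$-elementary because its only proper quotients are $\F_2\oplus\F_2$ and two copies of $\F_2$, none coverable by unital subrings, so each has $\sigma_u = \infty > 3$.
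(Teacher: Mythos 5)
Your proof is correct, and its skeleton matches the paper's: part (1) supplies the $\sigma_u$-elementarity claims in (2) and (3), the pigeonhole among the linking (Type $\Pi_2$) subrings gives $\sigma_u\big(\bigoplus_{i=1}^{p+1}\F_p\big) \le \binom{p+1}{2}$, and (3) uses exactly the same three unital subrings of $A(1,2,2)$. The genuine differences are where the paper leans on prior literature and you argue from scratch. In (1), the paper quotes \cite[Theorem 5.3]{Werner} (every maximal subring is required in a minimal cover of $\bigoplus_{i=1}^{p}\F_p$, and the Type $\Pi_1$ ones omit $1_R$), whereas you exhibit $x=(0,1,\dots,p-1)$ and use Lagrange interpolation to show that any unital subring containing $x$ is all of $R$; this is more elementary and is reused verbatim for the quotients arising in (2) and (3). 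In (2), the paper's lower bound is just the cited value $\sigma(R)=p+\binom{p}{2}$ from \cite{Werner} combined with $\sigma \le \sigma_u$; you instead re-derive $\sigma(R)\ge\binom{p+1}{2}$ by encoding an arbitrary maximal-subring cover as a graph $H^+$ on $p+2$ vertices (edges for the subrings $A_{ij}$, plus a cone vertex $\ast$ over the zero-coordinate subrings $N_k$), observing that the collection covers iff $\chi(H^+)\ge p+1$, and invoking the standard fact that chromatic number $c$ forces at least $\binom{c}{2}$ edges; this makes the lemma self-contained (the only point needing care, normalizing the color of $\ast$ to $0$ by permuting colors, is fine). In (3) you do slightly more than the paper, classifying \emph{all} unital maximal subrings via Lemma \ref{lem:MaxSubringClassification} rather than merely exhibiting a cover of size $3$. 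One cosmetic slip: in (1) you call $\sigma$-elementarity ``immediate from Theorem \ref{thm:main}(1),'' but that theorem is a one-way classification and cannot certify that $\bigoplus_{i=1}^{p}\F_p$ \emph{is} $\sigma$-elementary; it is your parenthetical argument via \cite[Theorem 3.5]{Werner} (the ring is coverable while every proper quotient is not) that actually proves it, exactly as in the paper.
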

\begin{proof}
(1) Let $R = \bigoplus_{i=1}^p \F_p$. By \cite[Theorem 3.5]{Werner}, $R$ is coverable, but $\bigoplus_{i=1}^{p-1} \F_p$ is not, so $R$ is $\sigma$-elementary. As shown in \cite[Theorems 5.3]{Werner}, a minimal cover of $R$ requires every maximal subring of $R$. Since $\{0\}$ is the only maximal subring of $\F_p$, the Type $\Pi_1$ (see Definition \ref{def:pi1pi2}) maximal subrings of $R$ do not contain $1_R$. Thus, $R$ is not coverable by unital subrings.

(2) Let $R_i \cong \F_p$ for $1 \le i \le p+1$, and let $R = \bigoplus_{i=1}^{p+1} R_i$. Given $a = \sum_{i=1}^{p+1} a_i \in R$ with each $a_i \in R_i$, we must have $a_i = a_j$ for some $i \ne j$. Thus, $a$ lies in the Type $\Pi_2$ maximal subring of $R$ where $R_i$ is linked to $R_j$. There are $\binom{p+1}{2} = p + \binom{p}{2}$ such maximal subrings, and each one contains $1_R$. Thus, $\sigma_u(R) \le p + \binom{p}{2}$; but, $p + \binom{p}{2} = \sigma(R) \le \sigma_u(R)$ by Lemma \ref{lem:covnumrelation}, so $\sigma_u(R) = p + \binom{p}{2}$. By part (1), $R$ is $\sigma_u$-elementary. However, $R$ has $\bigoplus_{i=1}^p \F_p$ as a residue ring, and thus is not $\sigma$-elementary.

(3) Let $R = A(1,2,2)$, which is equal to the ring of upper triangular matrices over $\F_2$. The three unital subrings $S_1 := \left\{ \left(\begin{smallmatrix} a & 0 \\ 0 & b \end{smallmatrix} \right) : a, b \in \F_2\ \right\}$, $S_2 := \left\{ \left(\begin{smallmatrix} a & b \\ 0 & a \end{smallmatrix} \right) : a, b \in \F_2\ \right\}$, and $S_3 := \left\{ \left(\begin{smallmatrix} a & a+b \\ 0 & b \end{smallmatrix} \right) : a, b \in \F_2\ \right\}$ form a cover of $R$, so $\sigma_u(R) = 3$. The nonzero, proper residue rings of $R$ are isomorphic to $\F_2 \oplus \F_2$ and $\F_2$, neither of which has a unital cover. Hence, $R$ is $\sigma_u$-elementary. Finally, since $\sigma(\F_2 \oplus \F_2)=3$, $R$ is not $\sigma$-elementary.
\end{proof}

\begin{prop}\label{prop:whensigmaissigmau}
Let $R$ be a $\sigma$-elementary ring with unity. Then, $R$ is $\sigma_u$-elementary if and only if $R \not\cong \bigoplus_{i=1}^p \F_p$. Moreover, when $R$ is $\sigma_u$-elementary, we have $\sigma_u(R) = \sigma(R)$.
\end{prop}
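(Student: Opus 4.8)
The plan is to reduce everything to the single equality $\sigma_u(R) = \sigma(R)$, and then read off both halves of the statement from it. Observe that if $\sigma_u(R) = \sigma(R)$ for a $\sigma$-elementary ring $R$, then $R$ is automatically $\sigma_u$-elementary: for any nonzero two-sided ideal $I$, the chain $\sigma_u(R) = \sigma(R) < \sigma(R/I) \le \sigma_u(R/I)$ holds, where the strict inequality is $\sigma$-elementarity of $R$ and the last inequality is Lemma \ref{lem:covnumrelation}. Conversely, the only way a $\sigma$-elementary ring can fail to be $\sigma_u$-elementary is to admit no cover by unital subrings at all, and by Lemma \ref{lem:sigmaunotequalsigma}(1) this occurs precisely for $R \cong \bigoplus_{i=1}^p \F_p$, which is $\sigma$-elementary but not unitally coverable. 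Thus the entire proposition follows once I show that every $\sigma$-elementary $R \not\cong \bigoplus_{i=1}^p \F_p$ satisfies $\sigma_u(R) = \sigma(R)$.

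By Lemma \ref{lem:covnumrelation}, proving $\sigma_u(R) = \sigma(R)$ is equivalent to producing a minimal cover of $R$ all of whose members contain $1_R$, so the strategy is to walk through the four families of Theorem \ref{thm:main} and verify that the known minimal covers can be taken unital. In the two semisimple families this is immediate. When $R \cong M_n(q)$ with $n \ge 2$, every maximal subring is of Type I, II, or III (Definition \ref{def:maxsubs}), and each of these---a subspace stabilizer, a centralizer of a subfield, or a conjugate of $M_n(r)$---contains the identity matrix. When $R \cong \bigoplus_{i=1}^{\tau(q)} \F_q$ with $q = p^d$ and $d \ge 2$ (the case $d=1$ being exactly the excluded ring), every maximal subring is of Type $\Pi_1$ or $\Pi_2$ (Lemma \ref{lem:semisimplemax}); since for $d \ge 2$ the maximal subrings of $\F_q$ are its maximal subfields, both types contain $1_R$. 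Hence in both cases any minimal cover by maximal subrings is already unital.

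For the commutative non-semisimple family $R \cong \F_q(+)\F_q^2$, I would exhibit a cover directly: the subrings $\F_q(+)W$, as $W$ ranges over the $q+1$ one-dimensional subspaces of $\F_q^2$, are proper unital subrings covering $R$ (any $(a,v)$ lies in $\F_q(+)W$ for any line $W \ni v$), and their number equals $\sigma(R) = q+1$ by Theorem \ref{thm:main}(2). For the noncommutative non-semisimple family $R \cong A(n,q_1,q_2)$, I would start from the minimal cover $\msS(R) \cup \msZ$ of Proposition \ref{prop:AGLcover}. Each conjugate $S^{1+x} \in \msS(R)$ contains $1_R$, since conjugation by the unit $1+x$ fixes $1_R$. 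It then remains to choose the auxiliary family $\mcM$ of maximal subrings of $S$ covering $\bigcup_{x \ne 0} C_S(x)$ to be unital; when $n \ge 3$ I take $\mcM = \msC$ (Definition \ref{def:C}), whose members $T_U \oplus \F_{q_2}$ and $M_n(q_1) \oplus \F_r$ are all unital and which is a minimal cover of the centralizers by Theorem \ref{prop:mincoverPi}.

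The hard part will be the case $n = 1$, where $\bigcup_{x \ne 0} C_S(x)$ collapses to the single subring $D = \{(s,s) : s \in \F_{q_1} \cap \F_{q_2}\}$ of $S = \F_{q_1} \oplus \F_{q_2}$, and I must exhibit one unital maximal subring of $S$ containing $D$. If $q_1 = q_2$ (necessarily $\ne 2,4$ since $R$ is $\sigma$-elementary), then $D$ is the full diagonal, itself a unital Type $\Pi_2$ maximal subring. If $q_1 \ne q_2$, then $\F_{q_1} \cap \F_{q_2}$ is a proper subfield of the larger of the two fields, hence lies in a maximal subfield of it, so one of $M_1 \oplus \F_{q_2}$ or $\F_{q_1} \oplus M_2$ (with $M_i$ a maximal subfield of $\F_{q_i}$ containing $\F_{q_1} \cap \F_{q_2}$) is a unital maximal subring of $S$ containing $D$; this rests only on the routine fact that every proper subfield of a finite field lies in a maximal subfield. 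In each case this yields a minimal cover of $R$ by unital subrings, giving $\sigma_u(R) = \sigma(R)$ and completing the proposition.
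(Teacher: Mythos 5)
Your proposal is correct and follows essentially the same route as the paper: both reduce the statement to exhibiting, for each family in Theorem \ref{thm:main} other than $\bigoplus_{i=1}^p \F_p$, a minimal cover by unital subrings (yours verifying the commutative cases inline where the paper cites prior work, and both using $\msS(R)\cup\msZ$ with $\msC$ for $A(n,q_1,q_2)$). The only notable difference is your case analysis for $A(1,q_1,q_2)$, which the paper dispatches in one line by observing that $1_R \in C_S(x)$, so any maximal subring of $S$ containing the centralizer is automatically unital.
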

\begin{proof}
We show that, with the exception of $\bigoplus_{i=1}^{p} \F_p$, all of the $\sigma$-elementary rings listed in Theorem \ref{thm:main} admit minimal covers by unital subrings. Indeed, if $q \ne p$, then every maximal subring of $\bigoplus_{i=1}^{\tau(q)} \F_q$ contains the identity of the ring \cite[Theorem 4.8]{Werner}. For any $q$, a cover of $\F_q (+) \F_q^{2}$ by unital maximal subrings is given in \cite[Example 6.1]{Werner}. For $n \ge 2$, maximal subrings of $M_n(q)$ were fully classified in \cite[Theorem 3.3]{PeruginelliWerner}, and all such subrings contain the identity matrix. Finally, for rings of AGL-type, minimal covers were constructed in \cite{SwartzWernerI}. As noted in \cite[Remarks 3.7, 4.3]{SwartzWernerI}, when $A(n, q_1, q_2)$ is $\sigma$-elementary, it admits a minimal cover by unital subrings.
\end{proof}

Proposition \ref{prop:whensigmaissigmau} proves the first half of Theorem \ref{thm:mainunital}. It remains to show that if $R$ is $\sigma_u$-elementary but not $\sigma$-elementary, then either $R \cong \bigoplus_{i = 1}^{p+1} \F_p$ or $R \cong A(1,2,2)$. Since we are now focusing on a ring with unity, we will employ many of the same assumptions and notations that were used in Sections \ref{sect:Peirce} through \ref{sect:summands}. In particular, by Lemma \ref{lem:FullReduction} it will suffice to consider rings of characteristic $p$ with 2-nilpotent radicals. For the remainder of this section, we will assume the following:

\begin{Not}\label{Not:sigmau} Let $R$ be a unital ring with characteristic $p$, and let  $J:=\msJ(R)$ be the Jacobson radical of $R$. Assume that $J^2 = \{0\}$. Let $\msS(R)$ be the set of all the semisimple complements to $J$ in $R$. Fix $S \in \msS(R)$, so that $R = S \oplus J$, and write $S = \bigoplus_{i=1}^N S_i$ for some simple rings $S_i$, each with unity $e_i$. For all $1 \le i, j \le N$, define $J_{ij} := e_i J e_j$ to be the $(S_i, S_j)$-bimodule obtained via a Peirce decomposition of $J$. Note that $J = \bigoplus_{i,j} J_{ij}$. Let $\lambda_{ij}$ be the length of $J_{ij}$ as an $(S_i, S_j)$-bimodule; if $J_{ij}=\{0\}$, then we take $\lambda_{ij}=0$.
\end{Not}

As in Section \ref{sect:J}, our strategy is to show that in a $\sigma_u$-elementary ring, most of the bimodules $J_{ij}$ must be zero. Much of the work done earlier in this paper still holds when stated only for unital subrings or $\sigma_u$-elementary rings. As noted earlier, Theorem \ref{thm:Jij=0}, and Lemma \ref{lem:Jkk} hold mutatis mutandis for unital maximal subrings and $\sigma$-elementary rings (see Remarks \ref{rem:mutmut1} and \ref{rem:mutmut2}).  In particular, we have the following structural results on a $\sigma_u$-elementary ring $R$.

\begin{lem}\label{lem:Jij-Jkk}
Let $R$ be as in Notation \ref{Not:sigmau}.
\begin{enumerate}[(1)]
\item If $R$ is $\sigma_u$-elementary, then there is at most one pair $(i,j)$ with $1 \le i < j \le N$ and $J_{ij} \ne \{0\}$. For this pair $(i,j)$, if $J_{ij} \ne \{0\}$, then $\lambda_{ij}=1$.

\item For all $1 \le k \le N$, if $S_k$ is a field and $\lambda_{kk}=1$, 
then $R$ is not $\sigma_u$-elementary.
\end{enumerate}
\end{lem}

\begin{lem}\label{lem:R/I}
Let $R$ be a $\sigma_u$-elementary ring that is not $\sigma$-elementary.
\begin{enumerate}[(1)]
\item There exists a two-sided ideal $I$ of $R$ such that $J \subseteq I$ and $R/I \cong \bigoplus_{i = 1}^p \F_p$.

\item $\sigma(R) = p + \binom{p}{2}$.

\item If $R$ has a residue ring isomorphic to $\bigoplus_{i = 1}^{p+1} \F_p$, then $R = \bigoplus_{i = 1}^{p+1} \F_p$ and $\sigma_u(R) = p + \binom{p}{2}$.
\end{enumerate}
\end{lem}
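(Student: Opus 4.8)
The plan is to reduce all three parts to a single structural move: pass from $R$ to its maximal $\sigma$-preserving quotient, observe that this quotient is $\sigma$-elementary, and then let the sharp dichotomy of Proposition \ref{prop:whensigmaissigmau} force it to be $\bigoplus_{i=1}^p \F_p$. Throughout I may assume $R$ is finite, so that the two-sided ideals of $R$ form a finite poset; the three results promised to carry over to the $\sigma_u$-setting are not actually needed for this lemma, only Proposition \ref{prop:whensigmaissigmau}, Lemma \ref{lem:covnumrelation}, and Lemma \ref{lem:sigmaunotequalsigma}.

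For part (1), since $R$ is not $\sigma$-elementary there is a nonzero ideal $I_0$ with $\sigma(R/I_0) = \sigma(R)$. Consider the finite, nonempty collection of ideals $K$ with $\sigma(R/K) = \sigma(R)$, and choose $K$ maximal among them. First I would note $K \neq 0$, since $I_0 \neq 0$ lies in the collection so $0$ cannot be maximal. Next I would check that $R/K$ is $\sigma$-elementary: for any ideal $L \supsetneq K$, the ring $R/L$ is a quotient of $R/K$, so $\sigma(R/L) \ge \sigma(R/K)$; equality would place $L$ in the collection and contradict the maximality of $K$, so in fact $\sigma\big((R/K)/(L/K)\big) = \sigma(R/L) > \sigma(R/K)$. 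Now apply Proposition \ref{prop:whensigmaissigmau} to the $\sigma$-elementary ring $R/K$: either $R/K \cong \bigoplus_{i=1}^p \F_p$, or $R/K$ is $\sigma_u$-elementary with $\sigma_u(R/K) = \sigma(R/K)$. In the second case I obtain
\[ \sigma_u(R) < \sigma_u(R/K) = \sigma(R/K) = \sigma(R) \le \sigma_u(R), \]
where the first inequality uses that $R$ is $\sigma_u$-elementary and $K \neq 0$, and the last uses Lemma \ref{lem:covnumrelation}; this is a contradiction. Hence $R/K \cong \bigoplus_{i=1}^p \F_p$, and as this quotient is semisimple we get $J \subseteq K$, so $I := K$ works. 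Part (2) is then immediate from the same $K$: we have $\sigma(R) = \sigma(R/K) = \sigma\big(\bigoplus_{i=1}^p \F_p\big)$, and evaluating the formula of Theorem \ref{thm:main}(1) at $q = p$ (so $d=1$, $\tau(p)=p$, $\nu(p)=1$) yields $\sigma(R) = p + \binom{p}{2}$.

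For part (3), suppose $R/L \cong \bigoplus_{i=1}^{p+1} \F_p$ for some ideal $L$. By Lemma \ref{lem:sigmaunotequalsigma}(2) this quotient is $\sigma_u$-elementary with $\sigma_u(R/L) = p + \binom{p}{2}$, while part (2) and Lemma \ref{lem:covnumrelation} give $\sigma_u(R) \ge \sigma(R) = p + \binom{p}{2}$. If $L \neq 0$, then $\sigma_u$-elementarity of $R$ forces $\sigma_u(R) < \sigma_u(R/L) = p + \binom{p}{2} \le \sigma_u(R)$, a contradiction. Hence $L = 0$, so $R \cong \bigoplus_{i=1}^{p+1} \F_p$, and then $\sigma_u(R) = p + \binom{p}{2}$ by Lemma \ref{lem:sigmaunotequalsigma}(2) once more.

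The only genuinely nontrivial step is the first one: recognizing that the maximal $\sigma$-preserving quotient is $\sigma$-elementary, which is exactly the hypothesis that unlocks Proposition \ref{prop:whensigmaissigmau}. After that, all three parts are short covering-number comparisons. The points to handle carefully are the extraction of a maximal element $K$ (for which finiteness of $R$ is convenient) and the verification that the resulting quotient is semisimple, so that $J \subseteq I$ as claimed.
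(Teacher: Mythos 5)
Your proof is correct and follows essentially the same route as the paper's: reduce to a $\sigma$-elementary quotient with the same covering number, apply Proposition \ref{prop:whensigmaissigmau} to force that quotient to be $\bigoplus_{i=1}^p \F_p$, and then settle parts (2) and (3) by the same covering-number comparisons via Lemmas \ref{lem:covnumrelation} and \ref{lem:sigmaunotequalsigma}. The only difference is cosmetic: you spell out, via the maximal-ideal argument, the existence of the $\sigma$-elementary quotient with $\sigma(R/K)=\sigma(R)$, a step the paper simply asserts.
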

\begin{proof}
(1) Since $R$ is not $\sigma$-elementary, it projects nontrivially onto a $\sigma$-elementary ring $R/I$ such that $\sigma(R) = \sigma(R/I)$. By Proposition \ref{prop:whensigmaissigmau}, all $\sigma$-elementary rings are $\sigma_u$-elementary except for $\bigoplus_{i = 1}^p \F_p$.  If $R/I \not\cong \bigoplus_{i = 1}^p \F_p$, then $\sigma_u(R/I) = \sigma(R/I) = \sigma(R) \le \sigma_u(R)$, a contradiction to $R$ being $\sigma_u$-elementary.  Thus, there exists an ideal $I$ of $R$ such that $R/I \cong \bigoplus_{i = 1}^p \F_p$. 
Since $R/I$ is semisimple, $J \subseteq I$.

(2) With $I$ as in part (1), we have $\sigma(R) = \sigma(R/I) = p + \binom{p}{2}$.

(3) Assume that $R$ has a residue ring isomorphic to $\bigoplus_{i = 1}^{p+1} \F_p$. By Lemma \ref{lem:sigmaunotequalsigma}, $\bigoplus_{i = 1}^{p+1} \F_p$ is $\sigma_u$-elementary and has unital covering number $p+\binom{p}{2}$. By Lemma \ref{lem:covnumrelation} and part (2), $p+\binom{p}{2} = \sigma(R) \le \sigma_u(R).$ Since $R$ is $\sigma_u$-elementary, we must have $R = \bigoplus_{i = 1}^{p+1} \F_p$.
\end{proof}

Using Lemma \ref{lem:R/I}, we can impose another restriction on $R$ and assume that $R$ has a residue ring isomorphic to $\bigoplus_{i = 1}^p \F_p$, but does not have a residue ring isomorphic to $\bigoplus_{i = 1}^{p+1} \F_p$. Thus, we may assume that $S$ has the form
\begin{equation}\label{eq:Sform}
S = \Big(\bigoplus_{i=1}^p S_i\Big) \oplus \Big(\bigoplus_{i=p+1}^N S_i\Big),
\end{equation}
where $S_i \cong \F_p$ for all $1 \le i \le p$, and $S_i \not\cong \F_p$ for all $p+1 \le i \le N$.

\begin{lem}\label{lem:unitalPi1}
Let $S$ be as in \eqref{eq:Sform} and let $M$ be a maximal unital subring of $S$.
\begin{enumerate}[(1)]
\item If $M$ is Type $\Pi_1$, then $M$ contains $\bigoplus_{i=1}^p S_i$.
\item If $M$ does not contain $\bigoplus_{i=1}^p S_i$, then $M$ is Type $\Pi_2$ with $S_i$ linked to $S_j$ for some $1 \le i < j \le p$.
\end{enumerate}
\end{lem}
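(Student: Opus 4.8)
The plan is to reduce both statements to the classification of maximal subrings of the semisimple ring $S$ provided by Lemma \ref{lem:semisimplemax}, and then to use the hypothesis that $M$ is unital to exclude the field summands $S_i \cong \F_p$ with $1 \le i \le p$. The one preliminary point I would settle first is that a maximal unital subring of $S$ is actually a maximal subring, so that Lemma \ref{lem:semisimplemax} applies to it. For this, observe that the only simple summands of $S$ possessing a non-unital maximal subring are the copies of $\F_p$, whose unique proper subring is $\{0\}$ (the matrix rings $M_{n_i}(q_i)$ with $n_i \ge 2$ and the fields $\F_{q_i}$ with $q_i > p$ have only unital maximal subrings by Definition \ref{def:maxsubs}). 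Hence every non-unital maximal subring of $S$ is of Type $\Pi_1$ with its modified component $M_i = \{0\}$ for some $i \le p$, and such a subring has trivial $i$-th Peirce component and so cannot contain $1_S = \sum_{k=1}^N e_k$ (as $e_i \ne 0$). Therefore any maximal subring containing the unital subring $M$ must itself be unital and thus equal $M$; in particular $M$ is a maximal subring, and Lemma \ref{lem:semisimplemax} classifies it as Type $\Pi_1$ or Type $\Pi_2$.

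For part (1), suppose $M$ is Type $\Pi_1$, so $M = \bigoplus_{k \ne i} S_k \oplus M_i$ for some index $i$ and some maximal subring $M_i$ of $S_i$. Unitality forces $e_i \in M_i$. If $i \le p$, then $S_i \cong \F_p$ has $M_i = \{0\}$ as its only proper subring, which does not contain $e_i$, a contradiction; hence $i > p$. Consequently $\{1,\dots,p\} \subseteq \{k : k \ne i\}$, and so $\bigoplus_{j=1}^p S_j \subseteq M$, as claimed.

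For part (2), assume $M$ does not contain $\bigoplus_{j=1}^p S_j$. By part (1) it cannot be Type $\Pi_1$, so by Lemma \ref{lem:semisimplemax} it is Type $\Pi_2$: there are indices $k < \ell$ and an $\F_p$-algebra isomorphism $\phi \colon S_k \to S_\ell$ with $M = \{\sum_m s_m \in S : s_\ell = \phi(s_k)\}$. Since $\phi$ witnesses $S_k \cong S_\ell$, and a summand of $S$ is isomorphic to $\F_p$ exactly when its index is at most $p$, the indices $k$ and $\ell$ lie either both in $\{1,\dots,p\}$ or both in $\{p+1,\dots,N\}$. In the latter case each $x = \sum_{j=1}^p s_j \in \bigoplus_{j=1}^p S_j$ has zero $k$- and $\ell$-components, so the defining constraint reads $0 = \phi(0)$ and is automatically met, giving $\bigoplus_{j=1}^p S_j \subseteq M$ and contradicting the hypothesis. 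Hence $1 \le k < \ell \le p$, so $M$ links two of the first $p$ copies of $\F_p$, which is the desired conclusion.

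The genuinely delicate step is the preliminary reduction: the statement concerns maximal unital subrings, whereas Lemma \ref{lem:semisimplemax} is phrased for maximal subrings, so one must rule out a maximal unital subring being properly contained in a larger (necessarily non-unital) subring. Once that coincidence is established, the remainder is a short case analysis powered by two elementary facts — that $\F_p$ admits no proper unital subring, and that an $\F_p$-algebra isomorphism can only link mutually isomorphic summands — so I expect no further obstacles.
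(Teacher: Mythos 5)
Your proof is correct and takes essentially the same approach as the paper's: invoke the Type $\Pi_1$/Type $\Pi_2$ classification of Lemma \ref{lem:semisimplemax}, use unitality to rule out a Type $\Pi_1$ subring whose modified component sits in a summand $S_i \cong \F_p$ with $i \le p$, and note that a Type $\Pi_2$ linking can only join isomorphic summands, forcing both linked indices into $\{1,\dots,p\}$. Your preliminary reduction (that a maximal unital subring is in fact a maximal subring, so the classification applies) is a point the paper leaves implicit, and it is correct, though it follows immediately from the fact that any subring containing $M$ contains $1_S$ --- the detour through classifying the non-unital maximal subrings is unnecessary.
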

\begin{proof}
(1) Assume that $M$ is Type $\Pi_1$. Then, for some $1 \le i \le N$,
\begin{equation*}
M = S_1 \oplus \cdots \oplus S_{i-1} \oplus M_i \oplus S_{i+1} \oplus \cdots \oplus S_N,
\end{equation*}
where $M_i$ is a maximal subring of $S_i$. Suppose that $1 \le i \le p$. Then, $M_i = \{0\}$ because $S_i \cong \F_p$. But then, $1_R \notin M$, a contradiction. So, $i \ge p+1$, and $\bigoplus_{i=1}^p S_i \subseteq M$.

(2) If $\bigoplus_{i=1}^p S_i$ is not a subring of $M$, then by part (1) $M$ must be Type $\Pi_2$. In this case, the only way that $M$ will fail to contain $\bigoplus_{i=1}^p S_i$ is if $S_i$ is linked to $S_j$ for some $1 \le i < j \le p$.
\end{proof}

\begin{prop}\label{prop:lastbigprop}
Let $R$ be as Notation \ref{Not:sigmau}, and let $S$ be as in \eqref{eq:Sform}. Assume that $R$ is $\sigma_u$-elementary, but not $\sigma$-elementary.
\begin{enumerate}[(1)]
\item $\lambda_{ii} = 0$ for all $1 \le i \le p$.

\item $J_{ij} = \{0\}$ for all $1 \le i \le p$ and all $p+1 \le j \le N$.
\end{enumerate}
\end{prop}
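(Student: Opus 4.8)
The overall strategy is to show that any radical attached to one of the $p$ copies of $\F_p$ conflicts with $\sigma_u$-elementarity, using two kinds of estimates: proper residue rings whose unital covering number is too small, and families of maximal unital subrings forced into \emph{every} minimal unital cover, hence too numerous. Throughout I would use Lemma~\ref{lem:R/I}(2) (so $\sigma(R)=p+\binom{p}{2}$, and therefore $\sigma_u(R)\ge p+\binom{p}{2}$ by Lemma~\ref{lem:covnumrelation}), the residue rings produced by Lemma~\ref{lem:Peirce}(4), and the $\sigma_u$-analogues of Theorem~\ref{thm:Jij=0}, Lemma~\ref{lem:Jkk}, and Lemma~\ref{lem:SigmaElementary}(2)/Proposition~\ref{prop:idealcomplements}.

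For (1), fix $i\le p$ with $S_i\cong\F_p$ and suppose $J_{ii}\ne\{0\}$. If $\lambda_{ii}=1$, then $J_{ii}$ is a simple $(S_i,S_i)$-bimodule, and the $\sigma_u$-version of Lemma~\ref{lem:Jkk} gives $\sigma_u(R)=\sigma_u(R/J_{ii})$, contradicting $\sigma_u$-elementarity. If $\lambda_{ii}\ge2$, then $S_i\oplus J_{ii}$ is a commutative local $\F_p$-algebra with residue field $\F_p$ and $\dim_{\F_p}J_{ii}\ge2$, so it has $\F_p(+)\F_p^2$ as a quotient; by Lemma~\ref{lem:Peirce}(4), $R$ has a residue ring isomorphic to $\F_p(+)\F_p^2$, coming from a nonzero ideal since $N\ge p\ge2$. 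Hence $\sigma_u(R)<\sigma_u(\F_p(+)\F_p^2)=p+1$, so $\sigma_u(R)\le p$, contradicting $\sigma_u(R)\ge p+\binom{p}{2}\ge p+1$. Therefore $\lambda_{ii}=0$.

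For (2), suppose $J_{ij}\ne\{0\}$ with $1\le i\le p<j\le N$, so $S_i\cong\F_p$ and $S_j\cong M_{n_j}(q_j)\not\cong\F_p$. By the $\sigma_u$-version of Theorem~\ref{thm:Jij=0}, $J_{ij}$ is the unique nonzero off-diagonal bimodule and is simple; since $q_i\otimes q_j=q_j$, we have $|J_{ij}|=q_j^{\,n_j}$, and the residue ring $T_{ij}:=S_i\oplus S_j\oplus J_{ij}$ is isomorphic to $A(1,p,q_j)$ when $n_j=1$ and anti-isomorphic to $A(n_j,q_j,p)$ when $n_j\ge2$ (so $\sigma_u(T_{ij})=\sigma_u(A(n_j,q_j,p))$). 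For the lower bound, let $\widehat{J_{ij}}$ be an ideal complement of $J_{ij}$ in $J$; for each $x\in J_{ij}$ the subring $M_x:=S^{1+x}\oplus\widehat{J_{ij}}$ is a maximal unital subring with $R=M_x\oplus J_{ij}$. Because $J_{ij}\cap Z(R)=\{0\}$ these are $q_j^{\,n_j}$ distinct subrings, and since $\sigma_u(R)<\sigma_u(R/J_{ij})$ each lies in every minimal unital cover; as their union omits the centralizer cosets, $\sigma_u(R)\ge q_j^{\,n_j}+1$. This already finishes the easy cases against the residue-ring bounds $\sigma_u(R)<\sigma_u(T_{ij})$ and $\sigma_u(R)\le\sigma_u(S_j)=\sigma(M_{n_j}(q_j))$: if $n_j=1$ then $\sigma_u(T_{ij})=q_j+1=q_j^{\,n_j}+1$, forcing $q_j^{\,n_j}+1\le\sigma_u(R)<q_j^{\,n_j}+1$; if $n_j=2$ then $\sigma(M_2(q_j))=\tfrac12(q_j^2+q_j+2)<q_j^2+1\le\sigma_u(R)\le\sigma(M_2(q_j))$.

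The main obstacle is the matrix case $n_j\ge3$ (including the exceptional pair $(n_j,q_j)=(3,2)$), where the conjugates $M_x$ realize only the $\msS(T_{ij})$ part of a minimal cover of $T_{ij}$, and the weak bound $q_j^{\,n_j}+1$ no longer exceeds $\sigma_u(T_{ij})=q_j^{\,n_j}+\tfrac{q_j^{\,n_j}-1}{q_j-1}$. To finish I would strengthen the lower bound to $\sigma_u(R)\ge\sigma_u(T_{ij})$, which by itself contradicts $\sigma_u(R)<\sigma_u(T_{ij})$: the centralizer cosets of $T_{ij}$ must still be covered in $R$, and by Lemma~\ref{lem:Tk} (applied inside $S_j$) each maximal unital subring of $R$ meets the type-$T_d$ elements of at most one line-stabilizer, so reproducing the counting of Theorem~\ref{prop:mincoverPi} via Lemma~\ref{lem:mincovercriterion} in the ambient ring forces at least $\tfrac{q_j^{\,n_j}-1}{q_j-1}$ subrings beyond the $q_j^{\,n_j}$ conjugates. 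The delicate point is controlling the maximal unital subrings of $R$ that contain $J_{ij}$ (equivalently, that project onto all of $T_{ij}$) and verifying they cannot cheaply absorb these cosets; here I would pass to the projection $R\to T_{ij}$ and exploit the $\sigma$-elementarity, hence the cover-rigidity, of $T_{ij}$.
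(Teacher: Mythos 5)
Your part (1) is correct and is essentially the paper's own argument, and your numerical treatment of part (2) for $n_j=1$ and $n_j=2$ is sound (granting, as the paper does, the $\sigma_u$-analogues of Theorem \ref{thm:Jij=0}, Lemma \ref{lem:Jkk}, and the conjugate-counting lemmas). The genuine gap is exactly the case you call the main obstacle, $n_j\ge 3$, and the strategy you sketch there does not work. You want to force $\sigma_u(R)\ge\sigma_u(T_{ij})$ by running Lemma \ref{lem:mincovercriterion} in the ambient ring $R$, resting on the claim that each maximal unital subring of $R$ meets the type-$T_d$ elements of at most one line-stabilizer. That claim is false in $R$: Lemma \ref{lem:Tk} controls maximal subrings of $M_{n_j}(q_j)$ only, whereas $R$ has maximal unital subrings of the form $M'\oplus J$ with $M'$ a Type $\Pi_2$ subring of $S$ linking two summands $S_a\cong S_b\cong\F_p$, $a,b\in\{1,\dots,p\}\setminus\{i\}$ (these exist whenever $p\ge3$, since \eqref{eq:Sform} provides $p$ copies of $\F_p$; for $p=2$ the claim is still unjustified, since nothing you have proved excludes linkings between isomorphic summands of index greater than $p$). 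Such a subring imposes no condition on the $S_i$- and $S_j$-coordinates, so it contains the \emph{entire} test set $\Pi$, and in the notation of Lemma \ref{lem:mincovercriterion} one gets $c(M'\oplus J)\ge |\Pi_0|/|M_0\cap\Pi_0|=\binom{n_j}{1}_{q_j}>1$; the hypothesis of that lemma fails, so no such count can force $\binom{n_j}{1}_{q_j}$ extra subrings into a cover. Your fallback of passing to the projection $R\to T_{ij}$ fails for the same reason: these linking subrings project \emph{onto} $T_{ij}$, so a cover of $R$ need not induce a cover of $T_{ij}$ by proper subrings. The difficulty is not technical: $\sigma_u(R)\ge\sigma_u(T_{ij})$ is precisely what $\sigma_u$-elementarity denies, and linking subrings are exactly the mechanism by which a cover of $R$ could conceivably beat $\sigma_u(T_{ij})$ on $\Pi$, so they must be ruled out by a global argument, not a local count.

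The paper's proof of (2) does exactly this, uniformly in $n_j$ and with no covering-number comparisons at all. Taking the nonzero bimodule to be $J_{p\ell}$ without loss of generality, it shows that $T:=\bigoplus_{i=1}^{p-1}S_i$ is a nonzero two-sided ideal of $R$ on which $1+J$ acts trivially, so by Lemma \ref{lem:MaxSubringClassification} and Lemma \ref{lem:unitalPi1} every maximal unital subring of $R$ not containing $T$ must contain $J$ and reduce mod $J$ to a Type $\Pi_2$ subring linking two of the first $p$ summands. Given $a\in\whT$ with $S_p$-coordinate $\alpha_p\in\F_p$, choose $\alpha\in T$ whose coordinates are the $p-1$ elements of $\F_p\setminus\{\alpha_p\}$; then all $p$ of the $\F_p$-coordinates of $a+\alpha$ are distinct, and since the only automorphism of $\F_p$ is the identity, $a+\alpha$ lies in no linking subring, hence only in subrings containing $T$. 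The members of a minimal unital cover that contain $T$ therefore project to a unital cover of $R/T$, giving $\sigma_u(R/T)\le\sigma_u(R)$ and contradicting $\sigma_u$-elementarity. Your $n_j\le 2$ computations can stand, but the $n_j\ge3$ case needs an argument of this global type that actually uses the other $p-1$ copies of $\F_p$.
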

\begin{proof}
(1) Suppose first that $\lambda_{ii} \ge 2$ for some $1 \le i \le p$. Then, $S_i \oplus J_{ii} \cong \F_p (+) \F_p^{\lambda_{ii}}$ (the idealization of $\F_p$ with the $\lambda_{ii}$-dimensional vector space $\F_p^{\lambda_{ii}}$), which projects onto $\F_p (+) \F_p^2$. By Lemma \ref{lem:Peirce}(4), $R$ has a residue ring isomorphic to $\F_p (+) \F_p^2$. Using this and Lemmas \ref{lem:covnumrelation} and \ref{lem:R/I}, we get
\begin{equation*}
p+\textstyle\binom{p}{2} \le \sigma_u(R) \le \sigma_u(\F_p (+) \F_p^2) = p+1.
\end{equation*}
Since $R$ is $\sigma_u$-elementary, this forces $R \cong \F_p (+) \F_p^2$. But, this contradicts the fact that $R$ is not $\sigma$-elementary.

So, we must have $\lambda_{ii} \le 1$ for all $1 \le i \le p$. However, by Lemma \ref{lem:Jij-Jkk}(2), if $\lambda_{ii}=1$ for any $1 \le i \le p$, then $R$ is not $\sigma_u$-elementary. We conclude that $\lambda_{ii}=0$ for all $1 \le i \le p$.

(2) Suppose by way of contradiction that $J_{k \ell} \ne \{0\}$ for some $1 \le k \le p$ and $p+1 \le \ell \le N$. By Lemma \ref{lem:Jij-Jkk}(1), there is at most one pair $(i,j)$ with $1 \le i < j \le N$ and $J_{ij} \ne \{0\}$. Thus, $(k, \ell)$ is unique, and $J_{k \ell}$ is a simple $(S_k,S_\ell)$-bimodule.

Assume without loss of generality that $k=p$. Let $T = \bigoplus_{i=1}^{p-1} S_i \cong \bigoplus_{i=1}^{p-1} \F_p$. We claim that $T$ is a two-sided ideal of $R$. To see this, note that by part (1), $J_{ii} = \{0\}$ for all $1 \le i \le p-1$. Hence, $J = J_{p \ell} \oplus \big(\bigoplus_{i=p+1}^N J_{ii}\big)$. Let $\whT = \big(\bigoplus_{i=p}^N S_i\big) \oplus J$. Then, $R = T \oplus \whT$, and elements from $T$ and $\whT$ mutually annihilate one another. Thus, $T$ is an ideal of $R$. Moreover, the action of $1+J$ on $T$ is trivial, so $T$ is contained in $S'$ for all $S' \in \msS(R)$. We will show that $\sigma_u(R) = \sigma_u(R/T)$, which contradicts the fact that $R$ is $\sigma_u$-elementary.

Let $\mcC_1$ be the set of all unital maximal subrings of $R$ that contain $T$, and let $\mcC_2$ be the set of all unital maximal subrings that do not. Let $M \in \mcC_2$, and suppose that $J \not\subseteq M$. Then, by \cite[Theorem 3.10]{SwartzWerner}, $M = S' \oplus I$ for some $S' \in \msS(R)$ and some maximal subideal $I \subseteq J$. But then, $T \subseteq S' \subseteq M$, a contradiction. So, each subring in $\mcC_2$ contains $J$. Use a bar to denote passage to $R/J \cong S$. By Lemma \ref{lem:unitalPi1},  $\olM$ is Type $\Pi_2$ with $\olS_i$ linked to $\olS_j$ for some $1 \le i < j \le p$, and this is true for every $M \in \mcC_2$.

From here, we will show that for each $a \in \whT \cong R/T$, there exists $\alpha \in T$ such that $a+\alpha$ is in a maximal subring containing $T$. Let $a \in \whT$, and write $a$ as $a = \alpha_p + b$, where $\alpha_p \in S_p$ and $b \in \big(\bigoplus_{i=p+1}^N S_i\big) \oplus J$. Note that $\alpha_p$ is equal to an element of $\F_p$. Let $\alpha_1, \ldots, \alpha_{p-1}$ be all of the elements of $\F_p$ that are not equal to $\alpha_p$. Considering $\alpha_i$ as an element of $S_i$ for each $1 \le i \le p-1$, we take $\alpha := \sum_{i=1}^{p-1} \alpha_i \in T$. 

Certainly, $a+\alpha$ lies in some unital maximal subring $M$ of $R$. Suppose that $M \in \mcC_2$. As noted above, $\olM$ is Type $\Pi_2$ with $\olS_i$ linked to $\olS_j$ for some $1 \le i < j \le p$. Since the only automorphism of $\F_p$ is the identity, this means that $\overline{\alpha_i} = \overline{\alpha_j}$. However, this is impossible, because $\alpha_1, \ldots, \alpha_p$ are all distinct elements of $\F_p$. Thus, $a+\alpha$ lies in some subring in $\mcC_1$.

To complete the proof, let $\mcC$ be a minimal cover of $R$ by unital subrings, and let $\mcA = \mcC \cap \mcC_1$. Then, each subring in $\mcA$ contains $T$, and their images in $R/T$ cover $R/T \cong \whT$. Hence, $\sigma_u(R/T) \le |\mcA| \le \sigma_u(R)$, which contradicts the fact that $R$ is $\sigma_u$-elementary.
\end{proof}

\begin{prop}\label{prop:R=R1}
Let $R$ be as described prior to Lemma \ref{lem:R/I}, and let $S$ be as in \eqref{eq:Sform}. Assume that $R$ is $\sigma_u$-elementary, but not $\sigma$-elementary. Then,
\begin{equation*}
R = \Big(\bigoplus_{i=1}^p S_i \Big) \oplus \Big(\bigoplus_{1 \le i, j \le p} J_{ij}\Big).
\end{equation*}
\end{prop}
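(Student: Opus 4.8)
The plan is to show that $R$ has no simple summand $S_i$ with $i>p$ and no Peirce bimodule $J_{ij}$ with $\max(i,j)>p$, so that $R$ collapses onto its ``field block.'' Write $e=\sum_{i=1}^p e_i$ and $f=\sum_{i=p+1}^N e_i$, so that $e+f=1_R$ and $ef=fe=0$. Proposition \ref{prop:lastbigprop} already gives $J_{ii}=\{0\}$ for $i\le p$ and $J_{ij}=\{0\}$ for $1\le i\le p<j\le N$; the latter says exactly that $eRf=eJf=\{0\}$. It therefore remains to prove two things: first, the ``transpose'' vanishing $J_{ij}=\{0\}$ for $1\le j\le p<i\le N$ (equivalently $fJe=\{0\}$); and second, the vanishing of the entire complementary block $fRf=\big(\bigoplus_{i>p}S_i\big)\oplus\big(\bigoplus_{i,j>p}J_{ij}\big)$.

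For the first point I would rerun the argument of Proposition \ref{prop:lastbigprop}(2) with the two Peirce factors interchanged. Suppose $J_{ij}\ne\{0\}$ for some $j\le p<i$; by Theorem \ref{thm:Jij=0} this is the unique nonzero off-diagonal bimodule, it is simple, and the field summand among the two is $S_j\cong\F_p$, so relabel $j=p$. Then $T:=\bigoplus_{k=1}^{p-1}S_k$ is a two-sided ideal by Proposition \ref{prop:lastbigprop}(1) and Lemma \ref{lem:isolated}, and since every $e_kJ=Je_k=\{0\}$ for $k<p$ the group $1+J$ acts trivially on $T$, whence $T\subseteq S'$ for all $S'\in\msS(R)$. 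Partitioning the maximal unital subrings into those containing $T$ and those not, Lemma \ref{lem:MaxSubringClassification} forces the latter to contain $J$, and Lemma \ref{lem:unitalPi1} makes them Type $\Pi_2$ subrings linking two of $S_1,\dots,S_p$. For each $a\in\whT$, where $R=T\oplus\whT$, I would add $\alpha\in T$ whose $p-1$ field-coordinates run through the $p-1$ elements of $\F_p$ distinct from the $S_p$-coordinate of $a$; since the identity is the only automorphism of $\F_p$, the element $a+\alpha$ cannot lie in any $\Pi_2$ subring linking two of the first $p$ fields, so it lies only in subrings containing $T$. This yields $\sigma_u(R/T)\le\sigma_u(R)$, contradicting $\sigma_u$-elementarity (recall $p-1\ge1$, so $T\ne\{0\}$), and hence $fJe=\{0\}$.

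With both $eRf=\{0\}$ and $fRe=fJe=\{0\}$, the idempotents $e$ and $f$ are central, and $R=eRe\oplus fRf$ is a direct sum of two-sided ideals, where $eRe=\big(\bigoplus_{i\le p}S_i\big)\oplus\big(\bigoplus_{i,j\le p}J_{ij}\big)$ and $fRf=\big(\bigoplus_{i>p}S_i\big)\oplus\big(\bigoplus_{i,j>p}J_{ij}\big)$; set $J_F:=\bigoplus_{i,j\le p}J_{ij}$ and $J_G:=\bigoplus_{i,j>p}J_{ij}$, so $J=J_F\oplus J_G$ with $J_F\subseteq eRe$ and $J_G\subseteq fRf$. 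I then verify that every maximal unital subring respects this two-block decomposition: one that contains $J$ reduces modulo $J$ to a Type $\Pi_1$ or $\Pi_2$ maximal subring of $\bigoplus_i S_i$ (Lemma \ref{lem:semisimplemax}), and because no simple summand of $eRe/J_F$ (each $\cong\F_p$) is isomorphic to any summand of $fRf/J_G$ (none $\cong\F_p$), no $\Pi_2$ subring can link the two blocks; one that omits $J$ has the form $S'\oplus I$ (Lemma \ref{lem:MaxSubringClassification}), and since $e,f$ are central, both the complement $S'$ and the maximal subideal $I$ split as $S'=S'_F\oplus S'_G$ and $I=I_F\oplus J_G$ or $J_F\oplus I_G$ along the two blocks. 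Lemma \ref{lem:AMM2.2} then gives $\sigma_u(R)=\min\{\sigma_u(eRe),\sigma_u(fRf)\}$. If $fRf\ne\{0\}$, then $eRe$ and $fRf$ are both nonzero proper quotients of $R$, so $\sigma_u$-elementarity forces $\sigma_u(R)<\sigma_u(eRe)$ and $\sigma_u(R)<\sigma_u(fRf)$, contradicting this equality; hence $fRf=\{0\}$ and $R=eRe$, which is exactly the asserted form.

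I expect the main obstacle to be the first point: it is genuinely not contained in Proposition \ref{prop:lastbigprop}(2), which only kills $eJf$, so I must re-execute the entire linking-avoidance covering argument with the roles of the Peirce factors reversed and confirm that each ingredient—the ideal $T$, the classification via Lemmas \ref{lem:MaxSubringClassification} and \ref{lem:unitalPi1} of the maximal subrings missing $T$, and the distinct-coordinate construction exploiting $\mathrm{Aut}(\F_p)=1$—carries over unchanged. The bookkeeping in the final block, namely confirming that the decomposition $R=eRe\oplus fRf$ is respected by all maximal unital subrings so that Lemma \ref{lem:AMM2.2} is applicable, is routine but must be carried out with care, as it is precisely the hypothesis underlying the concluding numerical contradiction.
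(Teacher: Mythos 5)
Your proof is correct, and it follows the same route as the paper's: split $R$ into the field block and its complement, check that every maximal unital subring respects the splitting (via Lemma \ref{lem:MaxSubringClassification} when $J \not\subseteq M$, and via Lemma \ref{lem:semisimplemax} when $J \subseteq M$, with cross-block $\Pi_2$ links ruled out because no $S_j$ with $j > p$ is isomorphic to $\F_p$), and then combine the unital version of Lemma \ref{lem:AMM2.2} with $\sigma_u$-elementarity to kill the complementary block. The one place you go beyond the paper is worth highlighting: the paper's proof opens by citing Proposition \ref{prop:lastbigprop} for the vanishing of all mixed Peirce blocks, but, exactly as you observe, the statement of Proposition \ref{prop:lastbigprop}(2) literally kills only $e_iJe_j$ with $i \le p < j$ and says nothing about the transpose blocks $e_jJe_i$; the paper uses both directions silently. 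Your remedy---re-running the covering argument of Proposition \ref{prop:lastbigprop}(2) with the Peirce factors interchanged---is the right one, and it does go through verbatim: the ideal $T=\bigoplus_{k=1}^{p-1}S_k$ (your route through Lemma \ref{lem:isolated} is a clean way to see this), the trivial action of $1+J$ on $T$, the classification via Lemmas \ref{lem:MaxSubringClassification} and \ref{lem:unitalPi1} of the maximal unital subrings missing $T$, and the distinct-coordinate construction exploiting the fact that $\F_p$ has no nontrivial automorphisms are all insensitive to whether the unique nonzero off-diagonal bimodule is $J_{p\ell}$ or $J_{\ell p}$. So your proposal is not only a valid proof but also patches a small imprecision in the paper's own write-up.
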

\begin{proof}
By Proposition \ref{prop:lastbigprop}, we know that $J_{ij} = \{0\}$ for all $1 \le i \le p$ and $p+1 \le j \le N$. Decompose $R$ as $R = R_1 \oplus R_2$, where
\begin{align*}
R_1 &= \Big(\bigoplus_{i=1}^p S_i \Big) \oplus \Big(\bigoplus_{1 \le i, j \le p} J_{ij}\Big), \text{ and }\\
R_2 &= \Big(\bigoplus_{i=p+1}^N S_i \Big) \oplus \Big(\bigoplus_{p+1 \le i, j \le N} J_{ij}\Big).
\end{align*}

We claim that all unital maximal subrings of $R$ respect this decomposition. That is, if $M$ is a unital maximal subring of $R$, then either $M=M_1 \oplus R_2$ with $M_1$ maximal in $R_1$, or $M=R_1 \oplus M_2$ with $M_2$ maximal in $R_2$. When $J \not\subseteq M$, this can be shown just as in the proof of Proposition \ref{prop:R1R2R3}. When $J \subseteq M$, $M = T \oplus J$ for some maximal subring $T$ of $S$. In this case, the only way that $M$ would not respect the direct sum decomposition of $R$ is if $T$ were Type $\Pi_2$ with $S_i$ linked to $S_j$ for some $1 \le i \le p$ and $p+1 \le j \le N$. However, this is impossible, because $S_j \not\cong \F_p$ for all $p+1 \le j \le N$. 



Since every maximal subring of $R$ has the form $M_1 \oplus R_2$ or $R_1 \oplus M_2$, we can apply Lemma \ref{lem:AMM2.2}, which yields $\sigma_u(R) = \min\{\sigma_u(R_1), \sigma_u(R_2)\}$. Since $R$ is $\sigma_u$-elementary, this means that either $R = R_1$ or $R=R_2$. Because only $R_1$ has $\bigoplus_{i=1}^p \F_p$ as a residue ring, we conclude that $R=R_1$.
\end{proof}

\begin{prop}\label{prop:R=A(1,2,2)}
Let $R$ be a $\sigma_u$-elementary ring that is not $\sigma$-elementary, and such that $R \not\cong \bigoplus_{i=1}^{p+1} \F_p$. Then, $R \cong A(1,2,2)$.
\end{prop}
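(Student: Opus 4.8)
The plan is to combine the structural results already established --- Propositions \ref{prop:R=R1} and \ref{prop:lastbigprop}, together with the $\sigma_u$-version of Theorem \ref{thm:Jij=0} --- to pin $R$ down to a ring direct sum of an $A(1,p,p)$ factor and some extra copies of $\F_p$, and then to force $p=2$ with no extra copies by a short numerical comparison of covering numbers. First I would invoke the reductions preceding Lemma \ref{lem:R/I}, so that $R$ is finite of characteristic $p$ with $J^2=\{0\}$, has a residue ring isomorphic to $\bigoplus_{i=1}^p\F_p$ but none isomorphic to $\bigoplus_{i=1}^{p+1}\F_p$ (the latter excluded by the hypothesis via Lemma \ref{lem:R/I}(3)), and $S$ has the form \eqref{eq:Sform}. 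Applying Proposition \ref{prop:R=R1} and then Proposition \ref{prop:lastbigprop}(1) yields
\[
R = \Big(\bigoplus_{i=1}^p S_i\Big)\oplus\Big(\bigoplus_{1\le i\ne j\le p} J_{ij}\Big), \qquad S_i\cong\F_p,\ J_{ii}=\{0\}.
\]

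Next I would determine the radical. If every $J_{ij}$ with $i\ne j$ were zero, then $R=\bigoplus_{i=1}^p\F_p$, which is $\sigma$-elementary by Lemma \ref{lem:sigmaunotequalsigma}(1), contradicting the assumption that $R$ is not $\sigma$-elementary; hence some $J_{ij}\ne\{0\}$. Theorem \ref{thm:Jij=0} (which carries over to $\sigma_u$-elementary rings) then forces this pair to be unique --- say $(i,j)=(1,2)$ after relabeling --- and $J_{12}$ to be a simple $(\F_p,\F_p)$-bimodule, so $|J_{12}|=p$. Exactly as in the proof of Theorem \ref{thm:main}, $S_1\oplus S_2\oplus J_{12}\cong A(1,p,p)$; and since the uniqueness of the pair means no nonzero $J$ connects $S_3,\dots,S_p$ to anything, the summands $S_3,\dots,S_p$ are isolated. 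Thus $T:=\bigoplus_{i=3}^p S_i$ is a two-sided ideal, $R=A(1,p,p)\oplus T$ as rings, and $R/T\cong A(1,p,p)$.

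The final step is the numerical squeeze. Suppose for contradiction that $p\ge 3$, so $T\ne\{0\}$. Since $p$ is prime and $\ge 3$ we have $(q_1,q_2)=(p,p)\ne(2,2),(4,4)$, so Theorem \ref{thm:smalln}(1) shows $A(1,p,p)$ is $\sigma$-elementary with covering number $p+1$, and Proposition \ref{prop:whensigmaissigmau} gives $\sigma_u(A(1,p,p))=p+1$. By the lifting inequality for unital covers, $\sigma_u(R)\le\sigma_u(R/T)=p+1$; but Lemma \ref{lem:covnumrelation} together with Lemma \ref{lem:R/I}(2) gives $\sigma_u(R)\ge\sigma(R)=p+\binom{p}{2}$. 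Hence $p+\binom{p}{2}\le p+1$, i.e.\ $\binom{p}{2}\le 1$, which is false for $p\ge 3$. Therefore $p=2$, the index set $\{3,\dots,p\}$ is empty, $T=\{0\}$, and $R=S_1\oplus S_2\oplus J_{12}\cong A(1,2,2)$.

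I expect the only real subtlety to be the bookkeeping needed to guarantee a genuine ring-direct-sum splitting $R=A(1,p,p)\oplus T$: one must check that the uniqueness in Theorem \ref{thm:Jij=0} plus the vanishing of the $J_{ii}$ really leaves $T$ as an honest two-sided ideal with $R/T\cong A(1,p,p)$, and that the covering-number values quoted for $A(1,p,p)$ are valid in the \emph{unital} setting (which is where Proposition \ref{prop:whensigmaissigmau} is essential). Once the splitting is secured, the contradiction is immediate from the mismatch between $\sigma(R)=p+\binom{p}{2}$ and $\sigma_u(R/T)=p+1$.
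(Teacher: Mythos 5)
Your proof is correct and takes essentially the same route as the paper's: both rely on Lemma \ref{lem:R/I}, Propositions \ref{prop:R=R1} and \ref{prop:lastbigprop}, and the $\sigma_u$-version of Theorem \ref{thm:Jij=0} to reduce $R$ to $A(1,p,p)$ plus isolated copies of $\F_p$, then squeeze $p+\binom{p}{2} \le \sigma_u(R) \le \sigma_u(A(1,p,p)) = p+1$ to force $p=2$ and $R \cong A(1,2,2)$. The only cosmetic differences are that you deduce $J \ne \{0\}$ from the $\sigma$-elementarity of $\bigoplus_{i=1}^p \F_p$ rather than from its non-coverability by unital subrings, and you phrase the numerical squeeze as a contradiction for $p \ge 3$ instead of running it for all $p$ at once.
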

\begin{proof}
By Lemma \ref{lem:R/I}, $R$ has no residue ring isomorphic to $\bigoplus_{i=1}^{p+1} \F_p$. By Proposition \ref{prop:R=R1}, $R = \big(\bigoplus_{i=1}^p \F_p \big) \oplus J$, and $J \ne \{0\}$ because $R$ is coverable by unital subrings. By Proposition \ref{prop:lastbigprop}, $J_{ii} = \{0\}$ for all $1 \le i \le p$. Moreover, by Lemma \ref{lem:Jij-Jkk}(1), there is a unique pair $(i,j)$ with $i \ne j$ such that $J_{ij} \ne \{0\}$, and $\lambda_{ij}=1$. 

Thus, $S_i \oplus S_j \oplus J_{ij} \cong A(1,p,p)$, which has unital covering number $p+1$. By \ref{lem:Peirce}, $A(1,p,p)$ occurs as a residue ring of $R$. Thus, $p+\binom{p}{2} \le \sigma_u(R) \le p+1$, which forces $p=2$ and $\sigma_u(R)=3$. Since $R$ is $\sigma_u$-elementary, we must have $R = S_i \oplus S_j \oplus J_{ij} \cong A(1,2,2)$.
\end{proof}

\begin{myproof}[of Theorem \ref{thm:mainunital}]
Apply Proposition \ref{prop:whensigmaissigmau}, Lemma \ref{lem:R/I}, and Proposition \ref{prop:R=A(1,2,2)}. The covering numbers for $\bigoplus_{i=1}^{p+1} \F_p$ and $A(1,2,2)$ were found in Lemma \ref{lem:sigmaunotequalsigma}.
\end{myproof}

\section{Bounds on the number of integers that are covering numbers of rings}
\label{sect:E(N)bounds}

This section is dedicated to the proof of Corollary \ref{cor:density}.    Recall that \[\mathscr{E}(N) := \{m : m \le N, \sigma(R) = m \text{ for some ring } R\}.\] 
Let $\log x$ denote the binary (base 2) logarithm. Our goal is to show that $|\mathscr{E}(N)|$ is bounded above by $cN/\log N$ for some positive constant $c$. Rather than attempting to find an optimal value for $c$, we will be content with a value that is easy to verify.

Let $\pi$ be the prime counting function, which counts the number of primes less than or equal to a given positive number.  The following bounds on $\pi(x)$ will be useful in later estimates.

\begin{lem}\label{lem:piupperbound}
For all $x >1 $, $\pi(x) < 2x/\log x$. For all $x > 5$, $x/\log x < \pi(x)$.
\end{lem}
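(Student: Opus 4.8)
The plan is to deduce both inequalities from standard explicit Chebyshev-type bounds on $\pi$ stated in terms of the natural logarithm, and then convert to base $2$. Throughout I would write $\ln$ for the natural logarithm, so that $\log x = \ln x/\ln 2$; consequently $x/\log x = (\ln 2)\,x/\ln x$ and $2x/\log x = (2\ln 2)\,x/\ln x = (\ln 4)\,x/\ln x$. The whole argument then reduces to comparing known constants against $\ln 2 = 0.6931\ldots$ and $\ln 4 = 1.3862\ldots$, plus a short finite check.

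For the upper bound I would invoke the Rosser--Schoenfeld estimate $\pi(x) < 1.25506\,x/\ln x$, which holds for all $x > 1$. Since $1.25506 < \ln 4$ and $x/\ln x > 0$ for $x > 1$, this yields $\pi(x) < 1.25506\,x/\ln x < (\ln 4)\,x/\ln x = 2x/\log x$ for every $x > 1$, exactly the first claim.

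For the lower bound I would use the companion Rosser--Schoenfeld estimate $\pi(x) > x/\ln x$, valid for $x \ge 17$. Because $\ln 2 < 1$, we get $x/\log x = (\ln 2)\,x/\ln x < x/\ln x < \pi(x)$ for all $x \ge 17$, which settles that range at once. It then remains to treat $5 < x < 17$. On each interval between consecutive primes the function $\pi$ is constant, while $x/\log x$ is strictly increasing for $x > e$ (its derivative $(\ln 2)(\ln x - 1)/(\ln x)^2$ is positive there, and $5 > e$). Hence on an interval $[p,p')$ with $5 \le p < p'$ consecutive primes it suffices to compare the constant value $\pi(p)$ with the supremum $p'/\log p'$. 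Checking the pairs $(p,p') \in \{(5,7),(7,11),(11,13),(13,17)\}$ gives $3 > 7/\log 7 = 2.49\ldots$, $4 > 11/\log 11 = 3.18\ldots$, $5 > 13/\log 13 = 3.51\ldots$, and $6 > 17/\log 17 = 4.16\ldots$, which finishes the range $5 < x < 17$.

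I expect no step here to be genuinely hard; the only care required is in the direction of the base-change inequalities (using $\ln 4 > 1.25506$ for the upper bound and $\ln 2 < 1$ for the lower bound) and in the bookkeeping over $5 < x < 17$, where the monotonicity of $x/\log x$ is what reduces the continuous verification to the four prime comparisons above. As a self-contained alternative to quoting the explicit constants, one could instead derive weaker Chebyshev bounds from $\prod_{p \le n} p \le 4^{\,n}$ and $\binom{2n}{n} \ge 4^{\,n}/(2n+1)$; however, recovering the clean strict inequalities with the precise constants $1$ and $2$ for all $x > 1$ from those estimates would force one to track error terms, so quoting the known explicit bounds is the more efficient route.
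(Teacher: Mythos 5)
Your proof is correct and follows essentially the same route as the paper: both arguments quote the Rosser--Schoenfeld bounds $\pi(x) < 1.25506\,x/\ln x$ for $x>1$ and $\pi(x) > x/\ln x$ for $x \ge 17$, convert between natural and binary logarithms, and settle $5 < x < 17$ by a finite check. The only difference is that you spell out the base-change constants and the inspection step (via monotonicity of $x/\log x$ and the four prime comparisons) that the paper leaves implicit.
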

\begin{proof}
Let $\ln x$ denote the natural logarithm. By \cite[Corollary 1]{RosserSchoenfeld}, we have $\pi(x) < (1.25506 x)/(\ln x)$ when $x > 1$, and $\pi(x) > x/\ln(x)$ when $x \ge 17$.  The remaining cases for the lower bound follow from inspection.
\end{proof}

Let $\Pi$ be the prime power counting function, i.e.,
\[\Pi(x) :=  |\{m \in \N : 2 \le m \le x \text{ and } m \text{ is a prime power} \}|.\]
Using Lemma \ref{lem:piupperbound}, it is not difficult to get an upper bound on $\Pi(x)$.

\begin{lem}
 \label{lem:primepowerbound}
 Let $x > 1$.  Then, $\Pi(x) < 8x/\log x$.
\end{lem}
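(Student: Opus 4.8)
The plan is to count prime powers according to their exponent and reduce everything to the prime-counting bound of Lemma \ref{lem:piupperbound}. Every integer $n \ge 2$ that is a prime power has a unique representation $n = p^k$ with $p$ prime and $k \ge 1$, and $p^k \le x$ is equivalent to $p \le x^{1/k}$; moreover $x^{1/k} < 2$ as soon as $k > \log x$ (recall that $\log$ is base $2$). Writing $K := \lfloor \log x \rfloor$, I would first record the counting identity
\[ \Pi(x) = \sum_{k=1}^{K} \pi\bigl(x^{1/k}\bigr), \]
each prime power being counted exactly once, in the term corresponding to its own exponent. For $1 < x < 2$ we have $K = 0$, the sum is empty, and $\Pi(x) = 0 < 8x/\log x$ trivially, so I may assume $x \ge 2$ from here on.

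Next I would split off the $k = 1$ term. For this main term, Lemma \ref{lem:piupperbound} gives $\pi(x) < 2x/\log x$. For the tail I would use only the trivial estimate $\pi(y) \le y$ together with $x^{1/k} \le \sqrt{x}$ for every $k \ge 2$; since the tail has $K - 1 \le \log x$ terms, this yields
\[ \sum_{k=2}^{K} \pi\bigl(x^{1/k}\bigr) \le (K-1)\sqrt{x} \le \sqrt{x}\,\log x. \]
Combining the two estimates gives $\Pi(x) < \dfrac{2x}{\log x} + \sqrt{x}\,\log x$. It then remains only to show $\sqrt{x}\,\log x \le \dfrac{6x}{\log x}$, that is, the elementary inequality $(\log x)^2 \le 6\sqrt{x}$ for all $x \ge 2$; granting it, $\Pi(x) < \frac{2x}{\log x} + \frac{6x}{\log x} = \frac{8x}{\log x}$, as required.

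The only real (and entirely routine) obstacle is this last inequality, which is where the slack in the constant $8$ is spent. Setting $t = \log x \ge 1$, it becomes $t^2 \le 6 \cdot 2^{t/2}$. I expect to handle it either by a one-line calculus argument—$t \mapsto t^2 2^{-t/2}$ has a unique maximum, found from a single derivative at $t = 4/\ln 2$, of value $16e^{-2}/(\ln 2)^2 \approx 4.5 < 6$, so the inequality holds with room to spare for all $t \ge 0$—or by a calculus-free split: verify $t^2 \le 6\cdot 2^{t/2}$ directly on the bounded range of moderate $t$, and for large $t$ use the crude bound $6\cdot 2^{t/2} = 6e^{(t\ln 2)/2} \ge (t\ln 2)^3/8$, which already exceeds $t^2$ once $t \ge 8/(\ln 2)^3$. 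Either route closes the argument, and the whole proof is short because the higher prime powers ($k \ge 2$) contribute only an $O(\sqrt{x}\,\log x)$ term, negligible against the $\Theta(x/\log x)$ main term.
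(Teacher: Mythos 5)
Your proof is correct and follows essentially the same route as the paper's: the identity $\Pi(x)=\sum_{k=1}^{\lfloor \log x\rfloor}\pi\bigl(x^{1/k}\bigr)$, the bound $\pi(x)<2x/\log x$ from Lemma \ref{lem:piupperbound} for the main term, and an elementary inequality absorbing the tail into $6x/\log x$. The only immaterial difference is that you bound the tail terms trivially by $\pi(y)\le y$, giving $\sqrt{x}\,\log x$ and requiring $(\log x)^2\le 6\sqrt{x}$, whereas the paper applies Lemma \ref{lem:piupperbound} again to $\pi(x^{1/2})$, giving $4\sqrt{x}$ and requiring only $\sqrt{x}<1.5x/\log x$; both closing inequalities are routine and your verification of yours is sound.
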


\begin{proof}
Let $\ell := \lfloor \log x \rfloor$. Then,
\begin{align*}
\Pi(x) &= \sum_{r=1}^{\ell} \pi(x^{1/r}) = \pi(x) + \sum_{r=2}^{\ell} \pi(x^{1/r}) \le \pi(x) + (\ell-1)\pi(x^{1/2})\\
       &\le \dfrac{2x}{\log x} + (\ell-1) \dfrac{2x^{1/2}}{(1/2)\log(x)} \le \dfrac{2x}{\log x} + 4x^{1/2} < \dfrac{8x}{\log x}.
\end{align*}
\end{proof}

Using Lemma \ref{lem:primepowerbound}, we can provide bounds on the number of integers that are expressible in terms of the formulas given in the statement of Theorem \ref{thm:main}.

\begin{lem}\label{lem:easybounds}
Let $N > 1$ be a natural number.  
\begin{enumerate}[(1)]
\item The number of integers $m$, $2 \le m \le N$, that can be expressed as $\tau(q) \nu(q) + d \binom{\tau(q)}{2}$ for some prime power $q = p^d$ is less than $8N/\log N$.
\item The number of integers $m$, $2 \le m \le N$, that can be expressed as $q + 1$ for some prime power $q = p^d$ is less than $8 N/\log N$.
\item The number of integers $m$, $2 \le m \le N$, that can be expressed as \[\frac{1}{a} \prod_{k=1,\\ a \nmid k}^{n-1} (q^n - q^k) + \sum_{k=1,\\ a \nmid k}^{\lfloor n/2 \rfloor} \binom{n}{k}_q,\]
where $q$ is some prime power, $n \ge 2$, $a$ is the smallest prime divisor of $n$, and $\binom{n}{k}_q$ is the $q$-binomial coefficient is less than $56 N/\log N$.
\item Let $N > 1$ be a natural number.  The number of integers $m$, $2 \le m \le N$, that can be expressed as \[q^n + \binom{n}{d}_{q_1} + \omega(d),\]
where $q$ and $q_1$ are prime powers, $q = q_1^d$, $n \ge 3$, and $d < n$ is less than $72 N/\log N$.
\end{enumerate}
\end{lem}
\begin{proof}
The first two parts follow from the fact that there is one such integer $m$ of the desired form for each prime power. Hence, the number of such integers $m$ is bounded above by $\Pi(N)$, which is less than $8N/\log N$ by Lemma \ref{lem:primepowerbound}.

For (3), let 
\begin{equation*}
P:= \frac{1}{a} \prod_{k=1,\\ a \nmid k}^{n-1} (q^n - q^k) \; \text{ and } \; S:= \sum_{k=1,\\ a \nmid k}^{\lfloor n/2 \rfloor} \binom{n}{k}_q.
\end{equation*}
We first note that $N \ge P+S \ge \tfrac{1}{2}q^2$, from which we conclude that $q \le \sqrt{2N}$, and there at most $\Pi(\sqrt{2N})$ choices for $q$.

Next, by \cite[Lemma 3.31]{SwartzWernerI} we have $P \ge q^{n(n-(n/a)-1)}$, so
\begin{equation*}
N \ge P+S > q^{n(n-(n/a)-1)} \ge q^{n(n-2)/2} \ge q^{(n-2)^2/2}.
\end{equation*}
It follows that $(n-2)^2 < 2 \log N$ and $n < 2 + \sqrt{2\log N}$. Since there is exactly one integer expressible as $P+S$ for each pair $(q,n)$, the number of such integers at most $N$ having this form is bounded above by
\begin{equation*}
\Pi(\sqrt{2N}) \cdot (2 + \sqrt{2\log N}) < \frac{8 \sqrt{2N}(2 + \sqrt{2\log N})}{(1/2)\log(2N)} < \frac{56N}{\log N}.
\end{equation*}

Finally, for (4), since $q^3 \le q^n < N$,  there are at most $\Pi(\sqrt[3]{N})$ choices for $q$, and, since $2^n \le q^n < N$ and $d < n$, there are at most $\log N$ choices for $n$ for a fixed $q$, and at most $\log N$ choices for $d$ given a fixed $n$.  Hence, the number of integers expressible in the desired form is bounded above by the number of triples $(q,n,d)$, which is bounded above by
 \[\Pi(\sqrt[3]{N}) (\log N)^2 < \frac{8 \sqrt[3]{N} (\log N)^2}{(1/3)\log N} < \frac{72 N}{\log N}.\]
 \end{proof}

We now have what we need to prove Corollary \ref{cor:density}.

\begin{proof}[Proof of Corollary \ref{cor:density}]
The upper bound of $144N/\log N$ follows from Lemma \ref{lem:easybounds}.  For the lower bound, we note that by Theorem \ref{thm:main} (2), every integer of the form $q + 1$, where $q$ is a prime power, is a covering number of a ring with unity. Thus, $|\mathscr{E}(N)| \ge \Pi(N) - 1$, and for $N \ge 5$ we have $\Pi(N)-1 \ge \pi(N) > N/\log N$.
\end{proof}

\bibliographystyle{plain}
\bibliography{CoveringNumbersOfRings}

\end{document}